\date{\today}                            
\newcommand{\bbD}{{\mathbb{D}}}
\newcommand{\bbR}{{\mathbb{R}}}
\newcommand{\bbZ}{{\mathbb{Z}}}
\newcommand{\bbC}{{\mathbb{C}}}
\DeclareMathAlphabet{\mathpzc}{OT1}{pzc}{m}{it}
\newcommand{\cA}{{\mathcal{A}}}
\newcommand{\cC}{{\mathcal{C}}}
\newcommand{\cD}{{\mathcal{D}}}
\newcommand{\cE}{{\mathcal{E}}}
\newcommand{\cF}{{\mathcal{F}}}
\newcommand{\cG}{{\mathcal{G}}}
\newcommand{\cJ}{{\mathcal{J}}}
\newcommand{\cK}{{\mathcal{K}}}
\newcommand{\cM}{{\mathcal{M}}}
\newcommand{\cN}{{\mathcal{N}}}
\newcommand{\cS}{{\mathcal{S}}}
\newcommand{\cT}{{\mathcal{T}}}
\newcommand{\cU}{{\mathcal{U}}}
\newcommand{\cV}{{\mathcal{V}}}
\newcommand{\cW}{{\mathcal{W}}}
\newcommand{\sE}{{\mathsf{E}}}
\newcommand{\fA}{{\mathfrak{A}}}
\newcommand{\fB}{{\mathfrak{B}}}
\newcommand{\fa}{{\mathfrak{a}}}
\newcommand{\fc}{{\mathfrak{c}}}
\newcommand{\fe}{{\mathfrak{e}}}
\newcommand{\ff}{{\mathfrak{f}}}
\newcommand{\fj}{{\mathfrak{j}}}
\newcommand{\fm}{{\mathfrak{m}}}
\newcommand{\e}{{\epsilon}}
\newcommand{\ve}{{\varepsilon}}
\renewcommand{\k}{\varkappa}
\newcommand{\z}{\zeta}
\newcommand{\pd}{{\partial}}
\def\restr#1{\,\vrule\,\lower1ex\hbox{$#1$}}
\def\a{\alpha}
\def\b{\beta}
\def\e{\epsilon}
\def\g{\gamma}
\def\k{\kappa}
\def\vk{\varkappa}
\def\l{\lambda}
\def\u{\upsilon}
\def\U{\Upsilon}
\def\o{\omega}
\def\O{\Omega}
\def\s{\sigma}
\def\S{\Sigma}
\def\t{\theta}
\def\z{\zeta}
\def\R{{\bf R}}
\renewcommand{\Im}{\text{\rm Im}\,}
\allowdisplaybreaks \numberwithin{equation}{section}
\newtheorem{theorem}{Theorem}[section]
\newtheorem{lemma}[theorem]{Lemma}
\newtheorem{proposition}[theorem]{Proposition}
\newtheorem{corollary}[theorem]{Corollary}
\theoremstyle{definition}
\newtheorem{definition}[theorem]{Definition}
\newtheorem{remark}[theorem]{Remark}
\date{\today}
\author{P. Yuditskii}
\title{Direct Cauchy Theorem and Fourier integral \\ in Widom domains}
\begin{document}
\maketitle

\begin{center}\hskip 2 cm
{\textit{In the study of mathematics, there is a grave injustice: \\ \hskip 3 cm we put in so much effort, but we get such miserable results...\\
\hskip 5 cm Larry Zalcman (from a private conversation)}}
\end{center}

\vskip 1 cm

\begin{abstract} We derive Fourier integral associated to the complex Martin function in the Denjoy domain of Widom type with the Direct Cauchy Theorem (DCT). As an application we study reflectionless Weyl-Titchmarsh functions in such domains, related to them canonical systems and transfer matrices.
The DCT property appears to be crucial in many aspects of the underlying theory.
\end{abstract}

\section{Introduction}
We develop here some specific aspects of the general de Branges theory \cite{BR}, which deals with the
 function theory in infinitely connected domains \cite{Has83} and spectral properties of random and almost-periodic operators
\cite{PaFi92}.

Let $\sE$ be a closed unbounded subset of the positive half axis,
$$
\mathsf{E}=\bbR_+\setminus\cup_{j\ge 1}(a_j,b_j).
$$
We assume that the domain $\O=\bbC\setminus \sE$ is regular in the sense of the potential theory \cite{GM}. By $\cG(\l,\l_0)$ we denote the Green function of the domain with singularity at 
$\l_0\in\O$. The \textit{complex Green function} is defined by
$$
\Phi_{\l_0}(\l)=e^{ i\t_{\l_0}(\l)}, \quad \t_{\l_0}(\l)=-\star \cG(\l,\l_0)+i\cG(\l,\l_0),
$$
where $\star \cG(\l,\l_0)$ is the harmonically  conjugated to $\cG(\l,\l_0)$ function,
$\star \cG(\l_*,\l_0)=0$ for a  normalization point $\l_*\in \bbR_-$. The complex Green function is multivalued in $\O$. Let $\pi_1(\O)$ be the fundamental group of
$\O$. It is generated by simple loops $\{\g^{(j)}\}_{j\ge 1}$, $\g^{(j)}$  starts and ends at $\l_*\in \bbR_-$ and goes through the gap $(a_j,b_j)$. To be extended by continuity along $\g^{(j)}$ the complex Green function obeys the following identity
$$
\Phi_{\l_0}(\g^{(j)}(\l))=e^{2\pi i\o(\l_0,E_j)}\Phi_{\l_0}(\l),
$$
where $\o(\l_0,\sE_j)=\o(\l_0,\sE_j,\O)$ is the harmonic measure of the set $\sE_j=\sE\cap[0,a_j]$ computed at $\l_0$. 

By $\pi_1(\O)^*$ we denote the group of  characters of the group $\pi_1(\O)$, see e.g. \cite{HeRo79},
$$
\a: \pi_1(\O)\to \bbR/\bbZ, \quad \a(\g_1\g_2)=\a(\g_1)+\a(\g_2),\ \g_j\in\pi_1(\O).
$$ 
We say that $F(\l)$ is character automorphic with a certain character $\a\in \pi_1(\O)^*$ if 
\begin{equation}\label{3oct1}
F(\g(\l))=e^{2\pi i\a(\g)} F(\l).
\end{equation}
Note that $|F(\l)|$ is single valued in the domain.

For a fixed character $\a$ by $H^\infty_{\O}(\a)$ we denote the collections of bounded analytic multivalued functions $F(\l)$ such that \eqref{3oct1} holds \cite{Has83}.
 More generally the Hardy spaces $H^p_{\O}(\a)$ are formed by functions which obeys \eqref{3oct1} and $|F(\l)|^p$ possesses a 
harmonic majorant in the domain.

\begin{theorem}[Widom]
The following two statements are equivalent
\begin{itemize}
\item $H_\O^{\infty}(\a)$ contains a non constant function for all $\a\in\pi_1(\O)^*$.
\item Let $\{c_j\}$ be the collection of critical points of $\cG(\l,\l_*)$, i.e., $\nabla \cG(c_j,\l_*)=0$. Then
\begin{equation}\label{3oct2}
\sum \cG(c_j,\l_*)<\infty.
\end{equation}
\end{itemize}
\end{theorem}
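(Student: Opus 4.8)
The plan is to translate both implications into the function theory of infinite Blaschke products built from the complex Green functions $\Phi_{\l_0}$, passing to the uniformization $\bbD\to\O$ when it helps to keep track of characters. Observe first that $\O=\bbC\setminus\sE$ always supports non-constant bounded analytic functions and that a non-trivial character admits no non-zero constant, so the content of the first statement is simply that $H^\infty_\O(\a)\ne\{0\}$ for every $\a\in\pi_1(\O)^*$.

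\emph{The Blaschke bridge.} The workhorse is the single-valued identity $|\Phi_{\l_0}(\l)|=e^{-\cG(\l,\l_0)}$, together with the fact recorded above that the character of $\Phi_{\l_0}$ takes the value $\o(\l_0,\sE_k)$ on $\g^{(k)}$. From $1-e^{-t}\asymp t$ on bounded intervals and the symmetry $\cG(\l_0,\l_*)=\cG(\l_*,\l_0)$ one gets $1-|\Phi_{\l_0}(\l_*)|\asymp\cG(\l_0,\l_*)$ whenever $\l_0$ stays in a region on which $\cG(\cdot,\l_*)$ is bounded, in particular in the gaps. Consequently, for a sequence $\{\l_i\}\subset\O$ the product $\prod_i\Phi_{\l_i}$ converges locally uniformly in $\O$ — and then defines a non-zero element of $H^\infty_\O(\sum_i\a_i)$, with $\a_i$ the character of $\Phi_{\l_i}$ — \emph{precisely when} $\sum_i\cG(\l_i,\l_*)<\infty$. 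This is the link to \eqref{3oct2}.

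\emph{Sufficiency.} Assume $\sum\cG(c_j,\l_*)<\infty$. On $(a_j,b_j)$ the function $\cG(\cdot,\l_*)$ is positive, real analytic, and vanishes at the endpoints, so it attains a maximum at a critical point $c_j$ there; in a Denjoy domain these are all the critical points. By the bridge, the Widom--Blaschke product $B=\prod_j\Phi_{c_j}$ converges, obeys $|B|\le 1$, and is non-constant, which already gives \emph{one} non-constant element of $H^\infty_\O$. To realise an \emph{arbitrary} character $\a$ I would build $F=\prod_i\Phi_{\z_i}$, choosing $\z_i\in\O$ inductively so that the partial characters $\sum_{i\le n}\o(\z_i,\cdot)$ converge to $\a$ in $\pi_1(\O)^*$ while $\sum_i\cG(\z_i,\l_*)<\infty$ is preserved (the latter guaranteeing both convergence of the product and that its character is exactly $\a$). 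This uses that the characters $\o(\z,\cdot)$ of the building blocks $\Phi_\z$, with $\z$ near $\sE$, are dense in $\pi_1(\O)^*$ — equivalently, one lifts to $\bbD\to\O$ with Fuchsian group $\G$ (of convergence type, since $\O$ carries a Green function), writes the lift of $\cG(\l,\l_*)$ as $-\log|B_\G|$ with $B_\G=\prod_{\g\in\G}b_{\g\fz_*}$ ($\fz_*$ over $\l_*$, $b_a$ the Blaschke factor with zero $a$), notes that \eqref{3oct2} is the corresponding \emph{Widom condition} for $\G$, and invokes the classical existence of non-constant bounded $\G$-automorphic functions of prescribed character on $\bbD$ (Widom, Hasumi). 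I expect this passage from "one function" to "a function for every character" to be the main obstacle: it is where the full strength of \eqref{3oct2}, and not merely convergence of a single product, is consumed.

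\emph{Necessity.} For the converse I would argue by contraposition. If $\sum\cG(c_j,\l_*)=\infty$, the partial products $\prod_{j\le n}\Phi_{c_j}$ no longer converge, but their characters have cluster points $\a$ in the compact group $\pi_1(\O)^*$, and I claim such an $\a$ cannot be carried by any non-zero bounded analytic function. Indeed the zero set of any non-zero $F\in H^\infty_\O(\a)$ obeys the Blaschke condition $\sum_{F(\z)=0}\cG(\z,\l_*)<\infty$ (visible once more from $|\Phi_\z(\l_*)|=e^{-\cG(\z,\l_*)}$), and, tracking also the characters of the outer and singular-inner factors on the cover, one finds that divergence of \eqref{3oct2} keeps the critical points of $\cG$ "uncancellable", so that $\a$ is unattainable. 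Making this dichotomy precise — pinning down the subgroup of attainable characters and showing \eqref{3oct2} is exactly what forces it to be all of $\pi_1(\O)^*$ — is again most cleanly organised through Widom's covering-space/vector-bundle bookkeeping, which is the route I would follow.
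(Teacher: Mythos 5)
Your outline identifies the right objects (the products of complex Green functions $\Phi_{\l_0}$, the identity $|\Phi_{\l_0}(\l)|=e^{-\cG(\l,\l_0)}$, the lift to the universal cover), but both implications stop exactly at the point where the theorem's actual content lies, so as written this is a program, not a proof. In the sufficiency direction, producing a nonzero element of $H^\infty_\O(\a)$ for \emph{every} $\a\in\pi_1(\O)^*$ is the heart of the matter, and at that step you ``invoke the classical existence of non-constant bounded $\G$-automorphic functions of prescribed character (Widom, Hasumi)'' --- which is precisely the statement being proved, so the argument is circular. What has to be shown is that the map from Blaschke-condition divisors to characters is onto. In the Denjoy setting the workable route is the one-point-per-gap divisor as in \eqref{5oct1}: since $\max_{[a_j,b_j]}\cG(\cdot,\l_*)=\cG(c_j,\l_*)$, condition \eqref{3oct2} makes every such product converge, and surjectivity of the resulting Abel-type map onto $\pi_1(\O)^*$ then needs a genuine argument (finite-gap approximation plus compactness/degree, or Widom's original extremal-function/vector-bundle argument). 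Your inductive scheme of choosing points $\z_i$ so that partial characters ``converge to $\a$'' glosses over the real difficulty: one must hit $\a$ \emph{exactly} on infinitely many generators simultaneously while keeping $\sum_i\cG(\z_i,\l_*)<\infty$; pointwise convergence of characters in the compact dual group does not by itself deliver a convergent product whose character is $\a$. This is a Jacobi-inversion type problem, and solving it is where \eqref{3oct2} is really consumed.

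The necessity direction has the same defect. The Blaschke condition on the zero set of a hypothetical $F\in H^\infty_\O(\a)$ obstructs nothing by itself, because $\a$ could in principle be carried by the outer part and by a singular inner factor, whose characters your sketch does not control; the word ``uncancellable'' is exactly the assertion that needs proof, and you defer it to ``Widom's covering-space/vector-bundle bookkeeping,'' i.e.\ again to the literature proof. Moreover, no mechanism is offered for why a cluster character of the divergent partial products $\prod_{j\le n}\Phi_{c_j}$ should be the unattainable one. The known arguments proceed differently: they relate \eqref{3oct2} to the behaviour of the derivative of the Green's Blaschke product on the universal cover and to the extremal quantities $\cW_\a(\l_*)$ (the same objects that appear in item (iii) of Theorem \ref{thdct}), and extract from the divergence of $\sum_j\cG(c_j,\l_*)$ a character with trivial $H^\infty_\O(\a)$. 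For what it is worth, the paper itself states this result as a known theorem of Widom (with \cite{Has83} as the reference) and gives no proof, so there is no in-paper argument to compare with; but in its present form your proposal reduces the theorem to itself in both directions and cannot be accepted.
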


In the Widom domain $\O$ the harmonic measure $\o(\l_*,d\xi)$ is absolutely continuous with respect to the Lebesgue measure, moreover 
\begin{equation}\label{3oct3}
\o(\l_*,d\xi)=|\Psi_{\l_*}(\xi)|\frac{d\xi}{\sqrt{\xi}}, \ \xi\in E,
\end{equation}
where  $\Psi(\l)=\Psi_{\l_*}(\l)$ is an \textit{outer} character automorphic function,
$\Psi(\g(\l))=e^{2\pi i \b_\Psi(\g)}\Psi(\l)$. Using this function we can reduce Hardy spaces $H_\O^p(\a)$ to the Smirnov spaces $E_\O^p(\b)$.

\begin{definition}
We say that $F(\l)$ belongs to the Smirnov class $N_+(\O)$ if it can be represented as a ratio of two bounded character automorphic functions with an outer denominator \cite[Ch. II, Sect. 5]{Gar07}.
We say that $F(\l)\in N_+(\O)$ belongs to the class $E_\O^p(\a)$ if \eqref{3oct1} holds, and its boundary values ($\xi\pm i0$, $\xi\in \sE$)  satisfy
\begin{equation}\label{4oct1}
\frac 1{2\pi}\oint_\sE |F(\xi)|^p\frac{d\xi}{\sqrt\xi}:=\frac 1 {2\pi}\int_\sE (|(F(\xi+i0)|^p+|F(\xi-i0)|^p)\frac{d\xi}{\sqrt\xi}<\infty.
\end{equation}
\end{definition}

\begin{proposition}
$F(\l)$ belongs to $E_\O^p(\a)$ if and only if 
$$
\Psi^{-1/p}_{\l_*}(\l) F(\l)\in H^2_{\O}(\a-\b_{\Psi_{\l_*}^{1/p}}), \quad \Psi^{1/p}_{\l_*}(\g(\l))=
\exp{2\pi i\b_{\Psi_{\l_*}^{1/p}}(\g)}\,\Psi^{1/p}_{\l_*}(\l).
$$
\end{proposition}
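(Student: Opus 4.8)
The plan is to reduce the equivalence to the two defining features of $\Psi_{\l_*}$: it is \emph{outer} and it is \emph{character automorphic} with $|\Psi_{\l_*}(\xi)|=\sqrt{\xi}\,\dfrac{d\o(\l_*,\xi)}{d\xi}$ on $\sE$ by \eqref{3oct3}. Write $G=\Psi_{\l_*}^{-1/p}F$. Since $\Psi_{\l_*}\in N_+(\O)$ is outer, so is $\Psi_{\l_*}^{1/p}$, hence $\Psi_{\l_*}^{-1/p}\in N_+(\O)$ as well (reciprocal of an outer function), and therefore $G$ lies in the field of fractions generated by $N_+(\O)$; the point is that $G$ is actually in $N_+(\O)$ precisely because the outer factor in the denominator of $F$ only gets multiplied by another outer function. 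So membership in $N_+(\O)$ transfers in both directions. On the multiplicativity of characters, \eqref{3oct1} for $F$ with character $\a$ together with $\Psi_{\l_*}^{1/p}(\g(\l))=e^{2\pi i\b_{\Psi_{\l_*}^{1/p}}(\g)}\Psi_{\l_*}^{1/p}(\l)$ gives $G(\g(\l))=e^{2\pi i(\a-\b_{\Psi_{\l_*}^{1/p}})(\g)}G(\l)$, i.e. $G$ has character $\a-\b_{\Psi_{\l_*}^{1/p}}$, and conversely.

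Next I would match the two integrability conditions. By definition $G\in H^2_\O(\a-\b_{\Psi_{\l_*}^{1/p}})$ iff $G\in N_+(\O)$, has the stated character, and $|G|^2$ has a harmonic majorant in $\O$; for such functions the least harmonic majorant is the Poisson extension of the boundary values, so the condition is equivalent to $\int_{\sE}|G(\xi)|^2\,d\o(\l_*,\xi)<\infty$ with the understanding that this is the sum over the two sides $\xi\pm i0$ of the slit. Now substitute $|G(\xi)|^2=|\Psi_{\l_*}(\xi)|^{-2/p}|F(\xi)|^2$ — wait, that is the wrong exponent; rather $|G|^p=|\Psi_{\l_*}|^{-1}|F|^p$, so I should test $|G|$ against the right power. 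The clean way: $G\in H^2_\O(\cdot)$ is equivalent to $G\in H^p_\O(\cdot)$-type boundedness only after raising to the correct power, so instead I use that for $G\in N_+(\O)$ with character, $G\in H^2_\O$ iff $\oint_\sE |G(\xi)|^2\,d\o(\l_*,\xi)<\infty$, and then compute $\oint_\sE |G(\xi)|^2 d\o(\l_*,\xi)$ is \emph{not} what appears; what appears after the substitution $|\Psi_{\l_*}(\xi)|\,d\xi/\sqrt\xi=d\o(\l_*,\xi)$ is $\oint_\sE |F(\xi)|^p |\Psi_{\l_*}(\xi)|^{-1}\,d\o(\l_*,\xi)=\oint_\sE|F(\xi)|^p\,\dfrac{d\xi}{\sqrt\xi}$. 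So the correct bridge is: $G=\Psi_{\l_*}^{-1/p}F\in H^2_\O$ should really be phrased via $H^p$; I will therefore state and use the standard identification $H^2_\O(\b)=\{\,G\in N_+(\O):G \text{ has character }\b,\ \oint_\sE|G|^2\,d\o(\l_*)<\infty\,\}$ together with the elementary fact $|\Psi_{\l_*}^{-1/p}F|^p\,d\o(\l_*)=|F|^p\,d\xi/\sqrt\xi$, and reconcile the exponent by noting the Proposition as stated is the $p=2$-normalised reformulation, i.e. one tests the $L^2(d\o)$ norm of $\Psi_{\l_*}^{-1/p}F\cdot(\text{nothing})$ only when $p=2$, and for general $p$ the correct statement tests $\Psi_{\l_*}^{-1/p}F\in H^p_\O$. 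I will follow the paper's normalisation and carry the computation through with \eqref{3oct3} as the single substitution.

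Concretely, the key steps in order: (1) record that $\Psi_{\l_*}$ and hence $\Psi_{\l_*}^{1/p}$ is outer in $N_+(\O)$, so $\Psi_{\l_*}^{\pm 1/p}$ multiply $N_+(\O)$ into itself, giving $F\in N_+(\O)\iff \Psi_{\l_*}^{-1/p}F\in N_+(\O)$; (2) use additivity of characters, $\a(\g_1\g_2)=\a(\g_1)+\a(\g_2)$, to see the character of $\Psi_{\l_*}^{-1/p}F$ is $\a-\b_{\Psi_{\l_*}^{1/p}}$; (3) invoke \eqref{3oct3} to rewrite $|\Psi_{\l_*}(\xi)|\,\dfrac{d\xi}{\sqrt\xi}=d\o(\l_*,\xi)$ on $\sE$, so that $\oint_\sE|\Psi_{\l_*}^{-1/p}(\xi)F(\xi)|^p\,d\o(\l_*,\xi)=\oint_\sE|F(\xi)|^p\,\dfrac{d\xi}{\sqrt\xi}$; (4) quote the characterization of the Hardy/Smirnov space on a Widom domain — a function in $N_+(\O)$ with prescribed character lies in $H^p_\O$ iff its boundary modulus is in $L^p(d\o(\l_*))$, the harmonic majorant being the Poisson extension — to conclude that the finiteness of the left side is exactly $\Psi_{\l_*}^{-1/p}F\in H^2_\O(\a-\b_{\Psi_{\l_*}^{1/p}})$ in the paper's normalisation. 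The main obstacle is step (4): asserting that for character-automorphic Smirnov functions the least harmonic majorant of $|G|^p$ is genuinely given by the Poisson integral of the boundary values against $\o(\l_*,\cdot)$ — equivalently that there is no singular part on the boundary and no loss at infinity. In a Widom domain this is exactly where the Direct Cauchy Theorem (or at least the regularity of $\O$ and absolute continuity of harmonic measure in \eqref{3oct3}) is used, and I would cite \cite{Has83} or \cite{Gar07} for the slit-domain version rather than reprove it here.
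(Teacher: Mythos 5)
Your argument is correct and is exactly the routine reduction the paper leaves unproved: multiplication by the outer function $\Psi_{\l_*}^{\mp 1/p}$ preserves the Smirnov class $N_+(\O)$, characters add, and \eqref{3oct3} turns $\oint_\sE|F|^p\,d\xi/\sqrt{\xi}$ into the boundary $L^p(d\o(\l_*,\cdot))$ condition that characterizes the Hardy space among character-automorphic Smirnov functions (Hasumi \cite{Has83}). You are also right that the exponent in the displayed conclusion should be $H^p_\O$ rather than $H^2_\O$ (a misprint, as the $p=1$ case used for DCT shows), and this equivalence needs only the regular Widom setting with \eqref{3oct3}, not DCT.
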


Let $\fj$ be the character generated by the function $\sqrt\l$, i.e., $e^{2\pi i\fj(\g^{(m)})}=-1$ for all generators $\g^{(m)}$.

\begin{definition} 
We say that the Direct Cauchy Theorem (DCT) holds in $\O$ if
\begin{equation}\label{4oct2}
\frac 1{2\pi i}\oint_\sE F(\xi)\frac{d\xi}{\sqrt\xi}=0,\quad \forall F\in E^1_{\O}(\fj).
\end{equation}
\end{definition}

The space $E_\O^2(\a)$ possesses the reproducing kernel, which we denote by $k^\a_{\l_0}(\l)=k^\a(\l,\l_0)$,
\begin{equation}\label{4oct2}
\langle F,k^\a_{\l_0}\rangle=\frac 1{2\pi}\oint_\sE\overline{k^\a(\xi,\l_0)} F(\xi)\frac{d\xi}{\sqrt{\xi}},\quad\forall F\in E^2_{\O}(\a).
\end{equation}

Let $F(\l)$ be a measurable function on $\pd\O$, i.e., $F=\{ F(\xi+i0), F(\xi-i0)\}_{\xi\in \sE}$. We say that $F\in L^p_{\pd\O}$ if 
\eqref{4oct1} holds.

Let $\cW_{\a}(\l)$ be a solution of the following extremal problem
$$
\cW_\a(\l_*)=\sup\{ W(\l_*):\ W(\l)\in H_\O^\infty(\a),\ \|W\|\le 1\}.
$$
The minimizer exists due to the compactness arguments.

\begin{theorem}[see \cite{Has83}] \label{thdct}
In a Widom domain $\O$ the following are equivalent 
\begin{itemize}
\item[(i)]
DCT holds.
\item[(ii)] $k^\a(\l_0,\l_0)$ is a continuous function in $\a\in\pi_1(\O)^*$.
\item[(iii)] $\cW_{\a}(\l)\to 1$ for a fixed $\l\in\O$ and $\a\to 0_{\pi_1(\O)^*}$.
\item[(iv)] $F\in  L^2_{\pd\O}\ominus E^2_{\O}(\a)$ if and only if $\overline{F}\in E^2_{\O}(\fj-\a)$ for all $\a\in\pi_1(\O)^*$.
\end{itemize}
\end{theorem}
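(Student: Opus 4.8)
The plan is to establish the cycle $(\mathrm{i})\Rightarrow(\mathrm{iv})\Rightarrow(\mathrm{ii})\Rightarrow(\mathrm{iii})\Rightarrow(\mathrm{i})$, essentially following the scheme of \cite{Has83}, and to use four standing facts. \emph{(a) Smirnov factorization}: every $F\in E^1_\Omega(\fj)$ is a product $F=GH$ of two $E^2$-functions whose characters sum to $\fj$ (inner--outer factorization in $N_+(\Omega)$ plus extraction of a square root of the outer part). \emph{(b) Conjugation symmetry}: since $\sE\subset\bbR$ the map $\l\mapsto\bar\l$ preserves $\Omega$, conjugation of boundary values is an anti-unitary involution of $L^2_{\pd\Omega}$, and $k^\a(\l_0,\l_0)=k^{-\a}(\l_0,\l_0)$, $\cW_\a(\l_0)=\cW_{-\a}(\l_0)$ for $\l_0\in\bbR_-$. \emph{(c) Pairing identity}: for $G\in E^2_\Omega(\a)$, $H\in E^2_\Omega(\fj-\a)$ one has $GH\in E^1_\Omega(\fj)$ and, the branch flip of $1/\sqrt\l$ across $\sE$ cancelling the orientation reversal of $\partial\Omega$,
\[
\frac1{2\pi i}\oint_\sE GH\,\frac{d\xi}{\sqrt\xi}
=\frac1{2\pi i}\int_\sE\frac{(GH)(\xi+i0)+(GH)(\xi-i0)}{\sqrt\xi}\,d\xi
=\frac1i\,\langle G,\overline H\rangle_{L^2_{\pd\Omega}} .
\]
\emph{(d) Upper semicontinuity}: the bound $|F(\mu)|\le C_\mu\|F\|$ on compacts of $\Omega$ is character-independent (the majorant of $|F|^2$ is), so a normal-families argument shows $\a\mapsto k^\a(\l_0,\l_0)$ and $\a\mapsto\cW_\a(\l_0)$ are automatically upper semicontinuous on the compact group $\pi_1(\Omega)^*$; hence (ii) and (iii) are in fact \emph{lower}-semicontinuity statements, and since $\cW_0\equiv1$, $k^0(\l_0,\l_0)>0$ the issue sits at $\a=0$.

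$(\mathrm{i})\Leftrightarrow(\mathrm{iv})$. By (a) and (c) every $F\in E^1_\Omega(\fj)$ satisfies $\frac1{2\pi i}\oint_\sE F\frac{d\xi}{\sqrt\xi}=\frac1i\langle G,\overline H\rangle$ with $G\in E^2_\Omega(\a)$, $\overline H\in\overline{E^2_\Omega(\fj-\a)}$ (for the $\a$ produced by the factorization), while conversely any such product lies in $E^1_\Omega(\fj)$; thus DCT is \emph{equivalent} to $\overline{E^2_\Omega(\fj-\a)}\perp E^2_\Omega(\a)$ for every $\a$, which is the ``$\Leftarrow$'' half of (iv). Hence $(\mathrm{iv})\Rightarrow(\mathrm{i})$ is free, and the content is $(\mathrm{i})\Rightarrow(\mathrm{iv})$, i.e.\ the reverse inclusion $L^2_{\pd\Omega}\ominus E^2_\Omega(\a)\subseteq\overline{E^2_\Omega(\fj-\a)}$: given $F\perp E^2_\Omega(\a)$ one forms the character-automorphic Cauchy integral of $F$ (built from the complex Green function $\Phi_\l$), and uses the orthogonality \emph{together with DCT} to see that its boundary contribution ``at the ends'' $0,\infty$ vanishes, so that the integral represents a function of $E^2_\Omega(\fj-\a)$ with boundary values $\overline F$. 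I expect this to be the main obstacle: in a Widom domain without DCT the annihilator of $E^2_\Omega(\a)$ is strictly larger than $\overline{E^2_\Omega(\fj-\a)}$, and it is exactly this ``leak'' that the theorem controls.

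$(\mathrm{iv})\Rightarrow(\mathrm{ii})$. Under (iv) the ``flux at the ends'' of the Cauchy integral vanishes for every character, so point evaluation at $\l_0$ on every $E^2_\Omega(\gamma)$ is implemented by one and the same character-independent kernel $c_{\l_0}\in L^2_{\pd\Omega}$. Decomposing $c_{\l_0}$ by the orthogonal splitting $L^2_{\pd\Omega}=E^2_\Omega(\a)\oplus\overline{E^2_\Omega(\fj-\a)}$, its two components are $k^\a_{\l_0}$ and (up to a unimodular constant) $\overline{k^{\fj-\a}_{\l_0}}$, whence
\[
k^\a(\l_0,\l_0)+k^{\fj-\a}(\l_0,\l_0)=\|c_{\l_0}\|^2_{L^2_{\pd\Omega}}\quad\text{independently of }\a .
\]
Both summands are upper semicontinuous by (d) and their sum is constant, so each is lower semicontinuous as well, hence continuous -- this is (ii). [As a cross-check one also has an elementary $(\mathrm{iii})\Rightarrow(\mathrm{ii})$: multiplying a unit extremal of $E^2_\Omega(\a)$ by $\cW_{\pm\beta}$ and evaluating at $\l_0$ gives $\cW_\beta(\l_0)^2k^\a(\l_0,\l_0)\le k^{\a+\beta}(\l_0,\l_0)\le\cW_\beta(\l_0)^{-2}k^\a(\l_0,\l_0)$, so $\cW_\beta(\l_0)\to1$ forces continuity.]

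$(\mathrm{ii})\Rightarrow(\mathrm{iii})$ and $(\mathrm{iii})\Rightarrow(\mathrm{i})$. For the first I would prove the duality
\[
\cW_\beta(\l_0)^2=\inf_{\a\in\pi_1(\Omega)^*}\frac{k^{\a+\beta}(\l_0,\l_0)}{k^\a(\l_0,\l_0)} ,
\]
where ``$\le$'' is the multiplication estimate just quoted and ``$\ge$'' is a Nevanlinna--Pick/commutant-lifting duality realizing the extremal multiplier $\cW_\beta$ as the coupling of the reproducing kernels of $E^2_\Omega(\a)$ and $E^2_\Omega(\a+\beta)$ at the optimizing $\a$; then, since by (ii) the ratio is a continuous positive function on the compact $\pi_1(\Omega)^*\times\pi_1(\Omega)^*$ equal to $1$ on $\{\beta=0\}$, uniform continuity gives $\inf_\a(\,\cdot\,)\to1$, i.e.\ $\cW_\beta(\l_0)\to1$. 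For $(\mathrm{iii})\Rightarrow(\mathrm{i})$: from $\cW_\beta\to1$ locally uniformly one gets (using the Widom convergence $\sum\cG(c_j,\l_*)<\infty$ of the critical-point Blaschke product) that $\cW_\beta\to1$ a.e.\ on $\sE$ as $\beta\to0$; writing $F\in E^1_\Omega(\fj)$ as $GH$ and inserting near-trivial character-automorphic factors that keep the total character equal to $\fj$, a dominated-convergence passage reduces $\frac1{2\pi i}\oint_\sE F\frac{d\xi}{\sqrt\xi}$ to a pairing that vanishes, i.e.\ DCT. The analytic heart of the theorem -- and the step I would expect to cost the most -- is the hard inclusion in $(\mathrm{i})\Rightarrow(\mathrm{iv})$, equivalently the inequality ``$\ge$'' in the displayed duality: that the Cauchy projection onto $E^2_\Omega(\a)$ does not leak is precisely the Parreau--Widom/DCT dichotomy.
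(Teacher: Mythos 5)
You should note first that the paper does not prove this theorem at all: it is quoted from the literature (``see \cite{Has83}''), so the only meaningful review is of your argument on its own terms, and there it has one concrete error plus several unproved key steps. The error is in your $(\mathrm{iv})\Rightarrow(\mathrm{ii})$: you posit a character-independent representer $c_{\l_0}\in L^2_{\pd\O}$ of point evaluation valid simultaneously on every $E^2_{\O}(\g)$ and deduce $k^\a(\l_0,\l_0)+k^{\fj-\a}(\l_0,\l_0)=\|c_{\l_0}\|^2$, independent of $\a$. No such universal kernel is available (the Cauchy-type kernel reproducing $E^2_\O(\a)$ genuinely depends on $\a$, cf.\ Lemma \ref{l18oct}; evaluation is not even obviously bounded on the non-orthogonal algebraic span $\cup_\g E^2_\O(\g)$), and the asserted identity is in fact false: by your own symmetry (b), $k^{\fj-\a}(\l_0,\l_0)=k^{\a+\fj}(\l_0,\l_0)$ (since $2\fj=0$), while \eqref{13oct0} together with \eqref{b12apr7} gives $k^{\a}(\l_*,\l_*)+k^{\a+\fj}(\l_*,\l_*)=\tfrac{i}{\sqrt{\l_*}}\,\fc(\a)$ with $\fc(\a)^{-1}=I_\a(\l_*)^{-1}+I_{\a+\fj}(\l_*)^{-1}$, a manifestly non-constant function of $\a$ (already in the one-gap case); the whole of Section 5 of the paper treats $\fc(\a)$ as a genuinely varying quantity. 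Thus your primary route to (ii) collapses, and the only valid route you indicate is the bracketed sandwich $\cW_\b(\l_0)^2k^\a\le k^{\a+\b}\le \cW_\b(\l_0)^{-2}k^\a$, i.e.\ $(\mathrm{iii})\Rightarrow(\mathrm{ii})$, which cannot close your cycle.

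Beyond that, the three genuinely hard implications are asserted rather than proved. For $(\mathrm{i})\Rightarrow(\mathrm{iv})$ the inclusion $L^2_{\pd\O}\ominus E^2_{\O}(\a)\subseteq\overline{E^2_{\O}(\fj-\a)}$ is only gestured at (``the boundary contribution at the ends vanishes''), and you yourself flag it as the main obstacle; this is precisely where the Hasumi--Hayashi machinery is required. For $(\mathrm{ii})\Rightarrow(\mathrm{iii})$ the ``$\ge$'' half of your duality $\cW_\b(\l_0)^2=\inf_\a k^{\a+\b}(\l_0,\l_0)/k^\a(\l_0,\l_0)$ is exactly the nontrivial content, and invoking ``Nevanlinna--Pick/commutant lifting'' is not an argument: you must actually produce a norm-one element of $H^\infty_\O(\b)$ with the prescribed value at $\l_0$. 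For $(\mathrm{iii})\Rightarrow(\mathrm{i})$ the sketch is circular: multiplying the factors $G\in E^2_\O(\a)$, $H\in E^2_\O(\fj-\a)$ by $\cW_\b$ and $\cW_{-\b}$ keeps the total character equal to $\fj$ but gives no reason whatsoever for the resulting pairing to vanish unless one already has DCT (equivalently the orthogonality you established to be equivalent to it); dominated convergence then merely returns the original integral. What is sound in the proposal: facts (a)--(d), the equivalence ``DCT $\Leftrightarrow$ $E^2_\O(\a)\perp\overline{E^2_\O(\fj-\a)}$ for all $\a$'' via the $E^1=E^2\cdot E^2$ factorization (hence $(\mathrm{iv})\Rightarrow(\mathrm{i})$), and the kernel sandwich. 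As it stands, however, the cycle does not close, and the central implications remain unproved.
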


These special functions in a Widom domain (i.e., the complex Green functions and reproducing kernels, which can be given in terms of canonical products, see Sect. 2) allow to construct intrinsic basis in the Hardy/Smirnov spaces of character automorphic functions.

\begin{theorem}[see \cite{SY97}]
Let $\b_0$ be the character generated by the complex Green function $\Phi_{\l_0}$. The following system of functions
\begin{equation}\label{4oct3}
e_n^\a(\l)=e_n^\a(\l,\l_0)=\Phi_{\l_0}(\l)^n \frac{k^{\a-n\b_0}_{\l_0}(\l)}{\sqrt{k^{\a-n\b_0}(\l_0,\l_0)}}
\end{equation}
forms an orthonormal basis in $E^2_{\O}(\a)$. That is, for an arbitrary $F\in E^2_{\O}(\a)$
\begin{equation}\label{4oct4}
F(\l)=\sum_{n\ge 0} c_n e^\a_n(\l), \quad c_n=\langle F, e_n^\a\rangle.
\end{equation}
\end{theorem}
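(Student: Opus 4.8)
The plan is to realize $\{e_n^\a\}$ as the orthonormal basis obtained by iterating a single isometric ``one--step'' decomposition of the spaces $E_\O^2(\,\cdot\,)$ induced by multiplication by the inner function $\Phi_{\l_0}$; beyond the (given) facts that the $E_\O^2$ spaces are nontrivial and carry reproducing kernels, no finer Widom/DCT information is needed for the bare basis property.

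\emph{Step 1 (multiplication by $\Phi_{\l_0}$).} Since $\O$ is regular, $\cG(\,\cdot\,,\l_0)$ is positive on $\O\setminus\{\l_0\}$, vanishes on $\sE$, is bounded above away from $\l_0$ (maximum principle, $\cG(\infty,\l_0)=0$), and has a simple logarithmic pole at $\l_0$; hence $\Phi_{\l_0}$ is a character--automorphic \emph{inner} function of character $\b_0=\{\o(\l_0,\sE_j)\}_j$, with $|\Phi_{\l_0}|=1$ on $\sE$, $|\Phi_{\l_0}|<1$ in $\O$, and exactly one, simple, zero, at $\l_0$. I claim $F\mapsto\Phi_{\l_0}F$ is an isometry of $E_\O^2(\a-\b_0)$ onto $\{F\in E_\O^2(\a):F(\l_0)=0\}$. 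It maps into $E_\O^2(\a)$ isometrically by the character arithmetic, by $|\Phi_{\l_0}|\le1$ in $\O$ (Smirnov class, integrability), and by $|\Phi_{\l_0}|=1$ on $\sE$ (so the norm \eqref{4oct1} is preserved). For surjectivity, let $F\in E_\O^2(\a)$ with $F(\l_0)=0$; writing $F=p/q$ with $p,q$ bounded and $q$ outer (so $q(\l_0)\ne0$) gives $p(\l_0)=0$, and since $p/\Phi_{\l_0}$ is bounded near $\l_0$ while $\Phi_{\l_0}$ is bounded below on $\O\setminus B(\l_0,\e)$, the quotient $p/\Phi_{\l_0}$ is bounded on $\O$; thus $F/\Phi_{\l_0}=(p/\Phi_{\l_0})/q\in N_+(\O)$, it is analytic on $\O$ (the simple zero of $\Phi_{\l_0}$ is cancelled), it has character $\a-\b_0$, and $\oint_\sE|F/\Phi_{\l_0}|^2\,\frac{d\xi}{\sqrt\xi}=\oint_\sE|F|^2\,\frac{d\xi}{\sqrt\xi}<\infty$, so $F/\Phi_{\l_0}\in E_\O^2(\a-\b_0)$.

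\emph{Step 2 (splitting and iteration).} Since $k^\a_{\l_0}$ reproduces evaluation at $\l_0$, one has $\{F\in E_\O^2(\a):F(\l_0)=0\}=E_\O^2(\a)\ominus\C k^\a_{\l_0}$, so Step 1 yields the orthogonal decomposition
\[
E_\O^2(\a)=\C\,\frac{k^\a_{\l_0}}{\sqrt{k^\a(\l_0,\l_0)}}\ \oplus\ \Phi_{\l_0}E_\O^2(\a-\b_0),
\]
with the displayed unit vector equal to $e_0^\a$ (here $k^\a_{\l_0}\ne0$: otherwise iterating $E_\O^2(\a)=\Phi_{\l_0}E_\O^2(\a-\b_0)$ would force every element to vanish to infinite order at $\l_0$, whence $E_\O^2(\a)=\{0\}$; in a Widom domain the spaces are nontrivial). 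Applying the same splitting to $E_\O^2(\a-\b_0),E_\O^2(\a-2\b_0),\dots$ and pushing the $k$-th one forward by the isometry $\Phi_{\l_0}^k$ (which preserves orthogonality) gives, for every $n$,
\[
E_\O^2(\a)=\Big(\bigoplus_{k=0}^{n-1}\C\,\Phi_{\l_0}^k\,\frac{k^{\a-k\b_0}_{\l_0}}{\sqrt{k^{\a-k\b_0}(\l_0,\l_0)}}\Big)\ \oplus\ \Phi_{\l_0}^n E_\O^2(\a-n\b_0).
\]
The $k$-th summand is spanned by $e_k^\a$; from $\|\Phi_{\l_0}^k k^{\a-k\b_0}_{\l_0}\|^2=k^{\a-k\b_0}(\l_0,\l_0)$ we get $\|e_k^\a\|=1$, and mutual orthogonality is part of the decomposition (directly, for $j<k$ pull $\Phi_{\l_0}^j$ out of $\langle e_j^\a,e_k^\a\rangle$ and use $(\Phi_{\l_0}^{k-j}k^{\a-k\b_0}_{\l_0})(\l_0)=0$). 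Hence $\{e_n^\a\}$ is an orthonormal system and $\mathrm{span}\{e_0^\a,\dots,e_{n-1}^\a\}=E_\O^2(\a)\ominus\Phi_{\l_0}^n E_\O^2(\a-n\b_0)$.

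\emph{Step 3 (completeness) and the main obstacle.} If $F\in E_\O^2(\a)$ with $F\perp e_n^\a$ for all $n$, then by Step 2 $F\in\Phi_{\l_0}^n E_\O^2(\a-n\b_0)$ for every $n$, i.e. $F=\Phi_{\l_0}^n G_n$ with $G_n$ analytic near $\l_0$; thus $F$ vanishes at the interior point $\l_0$ to order $\ge n$ for all $n$, and being locally analytic there (as an element of $N_+(\O)$) it must vanish identically. Hence $\{e_n^\a\}$ is complete, so an orthonormal basis, and \eqref{4oct4} is the associated orthonormal expansion. (To get the convergence in \eqref{4oct4} locally uniform in $\l$ as well, combine $\|F-\sum_{k<n}c_k e_k^\a\|\le\|F\|$, the reproducing--kernel bound $|G(\l)|\le\sqrt{k^{\a'}(\l,\l)}\,\|G\|$, the continuity of $\a'\mapsto k^{\a'}(\l,\l)$ on the compact group $\pi_1(\O)^*$ (under DCT, Theorem \ref{thdct}), and $|\Phi_{\l_0}(\l)|^n\to0$ for $\l\ne\l_0$.) The only point not reducible to Hilbert--space bookkeeping is the surjectivity half of Step 1 --- that division by the multivalued inner function $\Phi_{\l_0}$ lands \emph{onto} $E_\O^2(\a-\b_0)$ with exactly the character shift $\b_0$ --- which relies on the inner--outer factorization in the Smirnov class $N_+(\O)$ of an infinitely connected domain \cite{BR,Has83,Gar07}; this is where I expect the real work to be.
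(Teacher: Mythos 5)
The paper itself offers no proof of this statement (it is quoted from \cite{SY97}), and your argument is exactly the standard one behind the cited result: the filtration $E^2_\O(\a)\supset\Phi_{\l_0}E^2_\O(\a-\b_0)\supset\Phi_{\l_0}^2E^2_\O(\a-2\b_0)\supset\cdots$, the identification of each orthogonal complement with the span of the (shifted) reproducing kernel, and completeness via infinite-order vanishing at $\l_0$. Steps 1--3 are sound, including the surjectivity of division by $\Phi_{\l_0}$ (your $F=p/q$ argument, using the simple zero of $\Phi_{\l_0}$ at $\l_0$ and the boundedness of $\cG(\cdot,\l_0)$ away from the pole, is a legitimate substitute for invoking inner--outer factorization), and you are right that DCT is not needed for the bare basis property — it enters only for the refinements (continuity in $\a$, the Fourier integral of Theorem \ref{thm1}).

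The one genuine weak point is your parenthetical justification that $k^{\a-n\b_0}(\l_0,\l_0)>0$. As written it is circular: from $k^\a_{\l_0}=0$ you get $E^2_\O(\a)=\Phi_{\l_0}E^2_\O(\a-\b_0)$, but to iterate and force vanishing to infinite order you would also need $k^{\a-\b_0}_{\l_0}=0$, $k^{\a-2\b_0}_{\l_0}=0$, etc., which does not follow; moreover mere nontriviality of $E^2_\O(\a)$ does not rule out a common zero at $\l_0$. What you actually need — and need for every character $\a-n\b_0$, since otherwise \eqref{4oct3} is not even well defined — is the point-evaluation form of Widom's theorem: in a Widom domain, for every $\g\in\pi_1(\O)^*$ and every interior point there is a function in $H^\infty_\O(\g)$ not vanishing there (equivalently $\cW_\g(\l_0)>0$, equivalently $k^\g(\l_0,\l_0)>0$); note that dividing a nonzero $F\in H^\infty_\O(\g)$ by $\Phi_{\l_0}^m$ only produces a nonvanishing function after a character shift, so this is genuinely where the Widom condition is used, not a soft nontriviality remark. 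With that standard fact substituted for the parenthesis, your proof is complete and coincides with the route of \cite{SY97}.
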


Our first goal is to prove a continual analog of the decomposition \eqref{4oct4}. First of all we introduce the limit counterpart  of the Green function associated to a boundary point of the domain. We choose infinity as such point (this explains why we were interested to have $\sE$ as an unbounded set).

The Martin function $\cM(\l)$ in $\O$ (with respect to $\infty$) is a positive harmonic function continuously vanishing at all boundary points of the domain except for $\infty$,  especially for Denjoy domains see e.g. \cite{Koo1}. This function is unique up to a positive multiplier and can be obtain in the following limit procedure
\begin{equation}\label{4oct5}
\cM(\l)=\lim_{\l_0\to-\infty}\frac{\cG(\l,\l_0)}{\cG(\l_*,\l_0)}.
\end{equation}
By $\l_0\to-\infty$ we mean $\l_0\in\bbR_-\subset\O, \l_0\to\infty$.
Evidently, in this case the Martin function meets the normalization $\cM(\l_*)=1$. Respectively the complex Martin function is given as
\begin{equation}\label{84oct5}
e^{i\t(\l)}=\lim_{ \l_0\to-\infty}e^{i\t_{\l_0}(\l)/\cG_{\l_0}(\l_*)}, \quad \Im \t(\l)=\cM(\l).
\end{equation}
In this case  $e^{ix\t(\l)}$ is a character automorphic function  for an arbitrary real $x$. We also introduce a special notation for the corresponding character
\begin{equation}\label{4oct6}
e^{ix\t(\g(\l))}= e^{2\pi i x\eta(\g)}e^{i x\t(\l)}.
\end{equation}

Therefore the system of subspaces $e^{ix\t}E^2_{\O}(\a-\eta x)$, $x\in \R_+$, is a natural continuous counterpart of the generating the Fourier series \eqref{4oct4}  discrete system 
of subspaces $\Phi_{\l_0}^n E_{\O}^2(\a-n\b_0)$, $n\in\bbZ_+$.
Our first main result is the following theorem.
\begin{theorem}\label{thm1}
Let  $\O$ be a Widom domain with DCT. 
The following limit exists
\begin{equation}\label{pat8oct2}
v_{\a}(\l)=v_{\a,\l_*}(\l)=\lim_{\l_0\to-\infty}\frac{k^\a(\l,\l_0)}{k^{\a}(\l_*,\l_0)}
\end{equation}
 and represents a continuous  function in $\a\in\pi_1(\O)^*$.
We define a positive continuous measure by its distribution function
\begin{equation}\label{pat8oct3}
\vk_{\l_*}^\a(x)=k^{\a}(\l_*,\l_*)-e^{-2x\Im\t_*}k^{\a-\eta x}(\l_*,\l_*), \quad \t_*=\t(\l_*).
\end{equation}
Then the following (Fourier) transform 
\begin{equation}\label{m8oct2}
(\cF^\a f)(\l)=\int_{-\infty}^\infty
f(x)
e^{i x(\t(\l)-\t_*)}v_{\a-\eta x,\l_*}(\l)d\vk_{\l_*}^\a(x), \quad \l\in\pd\O,
\end{equation}
acts unitary from $L^2_{d\vk^{\a}_{\l_*}}$ to $L^2_{\pd\O}$. Moreover   $\left.\cF^\a L^2_{d\vk^{\a}_{\l_*}}\right |_{\bbR_+}=E^2_{\O}(\a)$.
\end{theorem}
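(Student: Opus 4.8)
\emph{Strategy.} The plan is to obtain $\cF^\a$ as the scaling limit, as $\l_0\to-\infty$, of the discrete orthogonal decompositions carried by the \emph{bilateral} systems $\{e^\a_n(\cdot,\l_0)\}_{n\in\bbZ}$; these are orthonormal bases of $L^2_{\pd\O}$ (the one-sided basis of \eqref{4oct4} spans $E^2_\O(\a)$, and one passes to $n<0$ using Theorem~\ref{thdct}(iv) together with the fact that multiplication by $\Phi_{\l_0}$ is an isometry of $L^2_{\pd\O}$, since $|\Phi_{\l_0}|=1$ on $\sE$). Put $h=h(\l_0)=\cG(\l_*,\l_0)\to0$ and couple the indices by $x=nh$. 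By \eqref{84oct5}, $\Phi_{\l_0}(\l)^n\to e^{ix\t(\l)}$ locally uniformly in $\O$, and, reading \eqref{84oct5}--\eqref{4oct6} at the level of characters, $\b_0/h\to\eta$, so $n\b_0\to\eta x$. Since $e^\a_n(\cdot,\l_0)=\Phi_{\l_0}^{\,n}\,e^{\a-n\b_0}_0(\cdot,\l_0)$ by \eqref{4oct3} (viewed inside $L^2_{\pd\O}$, also for $n<0$), the renormalized basis elements should converge to $K(\l,x):=e^{ix(\t(\l)-\t_*)}v_{\a-\eta x,\l_*}(\l)$, the Parseval sums $\sum_{n\in\bbZ}|\langle F,e^\a_n\rangle|^2$ to $\int_\bbR|\langle F,K(\cdot,x)\rangle|^2\,d\vk^\a_{\l_*}(x)$, and the tail subspaces $\overline{\mathrm{span}}\{e^\a_n:n\ge N\}=\Phi_{\l_0}^{\,N}E^2_\O(\a-N\b_0)$ to $\cH_x:=e^{ix\t}E^2_\O(\a-\eta x)$ (with $Nh\to x$). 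Making these four limits precise is the body of the proof.

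\emph{The function $v_\a$ and the measure $\vk^\a_{\l_*}$.} For the limit \eqref{pat8oct2} I would use the canonical--product representation of the reproducing kernels from Section~2: the whole $\l_0$--dependence of $k^\a(\l,\l_0)$ sits in explicit factors built from $\Phi_{\l_0}$; dividing by $k^\a(\l_*,\l_0)$ removes those factors that diverge as $\l_0\to-\infty$, and the rest converges by \eqref{84oct5} and the Widom bound \eqref{3oct2}. This gives $v_\a$ as a locally uniform limit in $\O$, character automorphic of character $\a$, normalized by $v_\a(\l_*)=1$ --- and in general $v_\a\notin L^2_{\pd\O}$ (just as $e^{ix\cdot}\notin L^2$), which is why \eqref{m8oct2} is a genuinely continual transform, defined first on compactly supported $f$ and then extended by the isometry below; continuity of $v_\a$ in $\a\in\pi_1(\O)^*$ comes from Theorem~\ref{thdct}(ii) applied along this convergence. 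For the measure, note that $\cM\equiv0$ on $\sE$, so $|e^{ix\t}|=1$ there and multiplication by $e^{ix\t}$ is an isometry of $L^2_{\pd\O}$ carrying $E^2_\O(\a-\eta x)$ onto $\cH_x$; hence $k^{\cH_x}(\l_*,\l_*)=e^{-2x\Im\t_*}k^{\a-\eta x}(\l_*,\l_*)$ and \eqref{pat8oct3} reads $\vk^\a_{\l_*}(x)=k^\a(\l_*,\l_*)-k^{\cH_x}(\l_*,\l_*)$. This is non--decreasing because $\{\cH_x\}$ is a decreasing nest, and continuous (atomless) because $x\mapsto\eta x$ is continuous into $\pi_1(\O)^*$ while $\b\mapsto k^\b(\l_*,\l_*)$ is continuous there by Theorem~\ref{thdct}(ii). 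Since on the compact group $\pi_1(\O)^*$ this last function has a positive minimum and a finite maximum and $\Im\t_*=\cM(\l_*)>0$, one obtains $\vk^\a_{\l_*}(+\infty)=k^\a(\l_*,\l_*)\in(0,\infty)$ (so $\bigcap_x\cH_x=\{0\}$) and $\vk^\a_{\l_*}(-\infty)=-\infty$; moreover $\overline{\bigcup_x\cH_x}=L^2_{\pd\O}$, because Theorem~\ref{thdct}(iv) gives $\cH_x^\perp=\overline{e^{-ix\t}E^2_\O(\fj-\a+\eta x)}$, a conjugated copy of the same kind of nest, whose intersection over $x$ is $\{0\}$ by the same estimate applied at every point of $\O$.

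\emph{Unitarity and the range.} Let $P_x$ be the orthogonal projection of $L^2_{\pd\O}$ onto $\cH_x$, so $P_{-\infty}=I$, $P_{+\infty}=0$ and $x\mapsto P_x$ is strongly continuous; it then suffices to prove the resolution identity $-d\langle P_xF,F\rangle=|\langle F,K(\cdot,x)\rangle|^2\,d\vk^\a_{\l_*}(x)$, $F\in L^2_{\pd\O}$, since integrating it gives $\|F\|^2=\int_\bbR|\langle F,K(\cdot,x)\rangle|^2\,d\vk^\a_{\l_*}(x)=\|(\cF^\a)^*F\|^2$ with $(\cF^\a)^*F(x)=\langle F,K(\cdot,x)\rangle$, i.e. $(\cF^\a)^*$ is isometric; combined with the fact that $\cF^\a$ is itself the scaling limit of the unitary orthonormal--basis expansions of $L^2_{\pd\O}$ in $\{e^\a_n(\cdot,\l_0)\}_{n\in\bbZ}$ (hence isometric), this makes $\cF^\a$ unitary. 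I would obtain the resolution identity by passing to the limit in the finite Parseval relations: for $Nh\to x$, $\sum_{n\ge N}|\langle F,e^\a_n(\cdot,\l_0)\rangle|^2$ is the squared norm of the projection onto $\Phi_{\l_0}^{\,N}E^2_\O(\a-N\b_0)$, whose $\l_*$--value $e^{-2Nh}k^{\a-N\b_0}(\l_*,\l_*)$ tends to $k^{\cH_x}(\l_*,\l_*)$, so this quantity tends to $\langle P_xF,F\rangle$, while a Riemann--Stieltjes computation turns the $n$-th increment $|\langle F,e^\a_n(\cdot,\l_0)\rangle|^2$ into $|\langle F,K(\cdot,x)\rangle|^2$ times the increment of $\vk^\a_{\l_*}$ (here one uses $e^\a_n=\Phi_{\l_0}^{\,n}e^{\a-n\b_0}_0$, the expansion $e^{\a-n\b_0}_0(\cdot,\l_0)=e^{\a-n\b_0}_0(\l_*,\l_0)\,v_{\a-n\b_0}+o(1)$, and $\Phi_{\l_0}^{\,n}v_{\a-n\b_0}\to e^{ix\t_*}K(\cdot,x)$). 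Finally the same discretization identifies $\cF^\a\big(L^2_{d\vk^\a_{\l_*}}|_{(x_0,\infty)}\big)$ with $\lim_{Nh\to x_0}\Phi_{\l_0}^{\,N}E^2_\O(\a-N\b_0)=\cH_{x_0}$; for $x_0=0$ this is the last assertion, $\cF^\a L^2_{d\vk^\a_{\l_*}}|_{\bbR_+}=\cH_0=E^2_\O(\a)$.

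\emph{Main obstacle.} The analytically hard part is the uniform control behind these four limits --- strong (not merely locally uniform) convergence $e^\a_n(\cdot,\l_0)\to K(\cdot,x)$ in $L^2_{\pd\O}$, strong resolvent convergence of the discrete nests $\Phi_{\l_0}^{\,N}E^2_\O(\a-N\b_0)$ to $\cH_x$, and tightness of the associated discrete spectral measures so that no $L^2$--mass escapes to $x=\pm\infty$ in the limit. Each of these reduces to the DCT--continuity of reproducing kernels in the character (Theorem~\ref{thdct}(ii),(iv)) --- precisely the hypothesis built into the statement --- but converting that continuity into the required quantitative estimates is where the real work lies.
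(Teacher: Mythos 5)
Your overall architecture coincides with the paper's: the transform is indeed obtained there as the scaling limit, with step $\cG(\l_*,\l_0)\to 0$, of the discrete expansions in the basis \eqref{4oct3}; the limit \eqref{pat8oct2} and its continuity in $\a$ come from the canonical-product representation of the kernels (Lemmas \ref{l18oct} and \ref{l1oct10}); and the completeness of $\bigcup_{x\in\bbR}e^{ix\t}E^2_\O(\a-\eta x)$ is proved exactly as you propose, via Theorem \ref{thdct}(iv) and the decay bound $|F(\l)|\le C\|F\|e^{x\cM(\l)}$ with $C=\sup_\b k^\b(\l,\l)<\infty$. So the route is the right one.

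The genuine gap is precisely the part you defer as the ``main obstacle,'' and the way the paper closes it also shows that some of the machinery you plan to build is both unnecessary and, as formulated, not well defined. The quantitative engine is the closed formula \eqref{5jan5}: it makes the ratios $e^\a_k(\l,\l_N)/e^\a_k(\l_*,\l_N)$ explicit and, together with DCT-continuity of the kernels, gives their convergence to $e^{i\frac{k}{N}(\t(\l)-\t_*)}V_{\a-\frac{k}{N}\eta}(\l)/V_{\a-\frac{k}{N}\eta}(\l_*)$ \emph{uniformly} in $k\le N$ --- without this (or an equivalent uniform estimate) the Riemann--Stieltjes passage you invoke does not go through, and your proposal never supplies it. Moreover, your resolution identity for general $F\in L^2_{\pd\O}$ rests on the pairing $\langle F,K(\cdot,x)\rangle$ with $K(\cdot,x)=e^{ix(\t-\t_*)}v_{\a-\eta x}$, which, as you yourself observe, need not belong to $L^2_{\pd\O}$; so the claim that $(\cF^\a)^*$ is isometric on all of $L^2_{\pd\O}$ is not even formulated correctly, and the strong $L^2_{\pd\O}$ convergence of $e^\a_n(\cdot,\l_0)$, the strong convergence of the nests, and the bilateral orthonormal basis are all extra (unproved) structure. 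The paper avoids all of this: it proves the scalar kernel identity \eqref{pa8oct1} (Theorem \ref{thm1bis}), i.e. the resolution of the identity tested only on reproducing kernels, for which pointwise limits of the explicit formulas plus the measure convergence \eqref{8oct2} suffice; it then defines the isometry on the dense family \eqref{9oct1}, notes that the image contains all $\chi_{(x,\infty)}$ and hence is dense in $\chi_{\bbR_+}L^2_{d\vk^\a_{\l_*}}$, which already yields unitarity onto $E^2_\O(\a)$, and concludes with the completeness step you also have. In short: same approach, but the missing ingredient is the explicit formula \eqref{5jan5}, and testing the isometry on kernels rather than on arbitrary $F$ is what eliminates the uniform-control difficulties you anticipate.
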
 

We apply this result to introduce and study associated to such domains reflectionless Weyl-Titchamrsh functions, canonical systems,   and 
transfer matrix functions.

The Nevanlinna  class  is formed by  functions $w(\l)$ analytic in the upper half $\bbC_+$ and having positive imaginary part, $\Im w(\l)>0$.  Such functions possess the additive
\begin{equation}\label{11oct7}
w(\l)=a\l+b+\int_{\bbR}\frac{1+\xi\l}{\xi-\l}\frac{d\sigma(\xi)}{1+\xi^2},\quad a>0,\ b\in\bbR,
\end{equation}
and the multiplicative
\begin{equation}\label{11oct8}
w(\l)=c\, e^{\int_{\bbR}\frac{1+\xi \l}{\xi-\l}\frac{\chi(\xi)d\xi}{1+\xi^2}}, \quad
c>0,
\end{equation}
representations. Here $\sigma$ is a nonnegative measure such that
$$
\int_{\bbR}\frac{d\sigma(\xi)}{1+\xi^2}<\infty,
$$
and a measurable function $\chi(\xi)$ is such that $\chi(\xi)\in [0,1]$. Moreover
$$
\chi(\xi)=\frac 1 \pi \arg w(\xi+i0), \quad \text{a.e.}\ \xi\in\bbR.
$$
We say that $w$ belongs to the Stieltjes class $\cS$ if the measure $\sigma$ in the representation \eqref{11oct7} is  supported on the positive half axis.
Such functions allow an analytic extension in the lower half plane by the symmetry principle, $w(\l)=\overline{w(\overline{\l})}$ (the function is analytic in
$\bbC\setminus\bbR_+$). 

By $\cS_0$ we denote the subclass $\cS$ of functions $m(\l)$ such that
$$
\lim_{\l\to-0} m(\l)=0.
$$
These functions possess a special additive representation
\begin{equation}\label{115oct7}
m(\l)=a\l+\int_{\bbR_+}\frac{\l d\sigma(\xi)}{\xi -\l},\quad a>0,\ \int_{\bbR_+}\frac{d\s(\xi)}{1+\xi}<\infty, \ \s\{0\}=0.
\end{equation}
Note $\cS_0$ is related to the Nevanlinna class functions $n(\mu)$  with an associated  symmetric measure in a simple way
\begin{equation}\label{15oct3}
n(\mu)=\frac 1 \mu m(\mu^2)=a\mu+\frac 1 2\int_{\bbR_+}
\left\{\frac{1}{t-\mu} -\frac{1}{t+\mu}\right\}d\sigma(t^2), \quad \mu\in\bbC_+.
\end{equation}

\begin{definition}
We say that $m_+\in\cS_0$ belongs to the set $m_0(\sE)$ if  there exists $m_-$ of the Stieltjes class such that 
\begin{equation}\label{15oct4}
m_-(\l)=-\overline{m_+(\l)}\quad\text{ for a.e. $\l\in\sE$,}
\end{equation}
 and the following two their symmetric combinations
\begin{equation}\label{11oct6}
R_0(\l)=-\frac{1}{m_+(\l)+m_-(\l)},\quad R_1(\l)=\frac{m_+(\l)m_-(\l)}{m_+(\l)+m_-(\l)}
\end{equation}
are holomorphic in $\O=\bbC\setminus \sE$.
\end{definition}

The relation \eqref{15oct4} means that $m_+(\l)$ has a \textit{pseudocontinuation} \cite[Lecture II, Sect. 1]{Nik} through the set $\sE$. In the spectral theory it is called the \textit{reflectionless} property \cite{Rem11, PoRem}. Due to this property   $R_i(\l)$  assumes  pure imaginary boundary values a.e. on $\sE$ (on the negative half axis and in the gaps $(a_j,b_j)$  they are real valued by the definition).

The following proposition gives a parametric description of the class 
$$m_0(\sE)\simeq\bbR_+\times\pi_1(\O)^*.$$
\begin{theorem}\label{th9}
Let $\O$ be of Widom type and DCT hold. Then $m_+\in m_0(\sE)$ if and only if it is of the form
\begin{equation}\label{15oct5}
m_+(\l)=\frac{m_+(\l_*)}{i\sqrt{\l_*}}\fm_+^\a(\l),\quad \fm_+^\a(\l):=i\sqrt{\l}\frac{v_{\a+\fj}(\l)}{v_{\a}(\l)},
\end{equation}
where $\a\in\pi_1(\O)^*$. 
\end{theorem}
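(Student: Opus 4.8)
The plan is to prove the two implications separately. For ``if'', fix $\a\in\pi_1(\O)^*$, set $m_+:=\fm_+^\a$ and introduce its \emph{companion} $\fm_-^\a$ --- the analogous ratio of reproducing-kernel limits attached to the reflected character, so that the pair $(\fm_+^\a,\fm_-^\a)$ mirrors the orthogonal splitting $L^2_{\pd\O}=E^2_\O(\a)\oplus\overline{E^2_\O(\fj-\a)}$ provided by Theorem \ref{thdct}(iv); its precise form (e.g.\ $\fm_-^\a=\fm_+^{\fj-\a}=i\sqrt\l\,v_{-\a}/v_{\fj-\a}$) is pinned down by the requirement that \eqref{15oct4} hold. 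I would first record the automorphy bookkeeping: from $e^{2\pi i\fj(\g^{(m)})}=-1$ one gets $2\fj=0$ in $\pi_1(\O)^*$, so $i\sqrt\l\,v_{\a+\fj}$ and $v_\a$ carry the same character $\a$; hence $\fm_+^\a$ (and $\fm_-^\a$) is single-valued in $\O$, holomorphic off $\bbR_+$, meromorphic in $\O$ with poles confined to the gaps $(a_j,b_j)$ (at the zeros of $v_\a$), and $\fm_+^\a(\l_*)=i\sqrt{\l_*}$ because $v_\gamma(\l_*)=1$; consequently any positive multiple of $\fm_+^\a$ has the form prescribed in \eqref{15oct5}.

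Three substantive points remain for ``if''. (i) \emph{$\fm_+^\a\in\cS_0$:} vanishing as $\l\to-0$ is immediate from the factor $i\sqrt\l$ and $v_\a(-0)\ne0$; the Herglotz property $\Im\fm_+^\a>0$ on $\bbC_+$ I would obtain from reproducing-kernel positivity underlying the de Branges-type spaces $E^2_\O(\cdot)$ --- writing $\Im\fm_+^\a(\l)$, via the reproducing property of $E^2_\O$ and the limit \eqref{pat8oct2} defining $v_\a$, as $\Im\l$ times a squared norm --- or, equivalently, by reading it off Theorem \ref{thm1}, identifying $\fm_+^\a$ with the Weyl datum of the canonical system generated by $\cF^\a$ relative to the free datum $i\sqrt\l$ (character $\fj$, the case $\sE=\bbR_+$). (ii) \emph{The reflectionless relation \eqref{15oct4}:} using the conjugation symmetry $\overline{v_\gamma(\l)}=v_{-\gamma}(\bar\l)$ (complex conjugation reverses the loops $\g^{(j)}$, hence negates characters), the identity $-\fj=\fj$, the branch relation $\sqrt{\xi-i0}=-\sqrt{\xi+i0}$ on $\bbR_+$, and the orthogonal-complement identity of Theorem \ref{thdct}(iv) (which furnishes the needed relation $\overline{v_{-\gamma}(\xi)}\,v_\gamma(\xi)+\overline{v_{-\gamma+\fj}(\xi)}\,v_{\gamma+\fj}(\xi)=0$ a.e.\ on $\sE$), one checks directly that $\fm_-^\a(\xi+i0)=-\overline{\fm_+^\a(\xi+i0)}$ for a.e.\ $\xi\in\sE$. (iii) \emph{$R_0,R_1$ of \eqref{11oct6} are holomorphic in $\O$:} both are single-valued by the same character count, and their only possible poles, at the zeros of $\fm_+^\a+\fm_-^\a$, are cancelled against the numerators; this gives $\fm_+^\a\in m_0(\sE)$.

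Conversely, let $m_+\in m_0(\sE)$ with partner $m_-$. Holomorphy of $R_0,R_1$ forces $m_\pm$ to be meromorphic in $\O$ with poles confined to the gaps, $R_0$ zero-free with $R_0(\l)\to0$ as $\l\to\infty$, and at each zero of $R_0$ inside a gap exactly one of $m_+,m_-$ has a simple pole and the other a simple zero; the resulting pole divisor of $m_+$, together with Theorem \ref{thdct}, selects the unique $\a\in\pi_1(\O)^*$ for which $v_\a$ has precisely that zero set (and then $v_{\a+\fj}$ vanishes exactly at the zeros of $m_+$ in the gaps). I would then show that $F(\l):=m_+(\l)\,v_\a(\l)/\bigl(i\sqrt\l\,v_{\a+\fj}(\l)\bigr)$ extends to a single-valued (equal characters), zero- and pole-free function of bounded characteristic in $\O$ whose modulus is single-valued and bounded on $\pd\O$ --- here one uses the reflectionless modulus identities, e.g.\ $|m_+(\xi)|^2=R_1(\xi+i0)/R_0(\xi+i0)$ a.e.\ on $\sE$, compared with the corresponding relations for $v_\a,v_{\a+\fj}$. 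A function of this kind in a Widom domain with DCT must be constant: this is exactly the content of DCT (equivalently Theorem \ref{thdct}(iii), $\cW_\a\to1$, i.e.\ triviality of the relevant Widom--Blaschke product), and the Widom condition alone would not suffice. Evaluating at $\l_*$ gives $F\equiv m_+(\l_*)/(i\sqrt{\l_*})$, which is \eqref{15oct5}.

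I expect the principal obstacle to be, on one side, the Herglotz property of $\fm_+^\a$ and, dually, the constancy of $F$: both reduce to showing that an automorphic unimodular multiplier compatible with the given normalization is trivial, and both genuinely need DCT rather than mere Widom-ness. The attendant nuisance is the bookkeeping of the divisors and characters of $v_\a$, $v_{\a+\fj}$ and $\sqrt\l$ across the gaps, which is what makes the pole--zero cancellations exact.
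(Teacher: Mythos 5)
Your overall architecture (forward: identify $\fm_+^\a$ as a reflectionless Stieltjes function via kernel positivity/Weyl disks; converse: match the pole divisor to an $\a\in\pi_1(\O)^*$ and show the ratio $F=m_+v_\a/(i\sqrt\l\, v_{\a+\fj})$ is constant) parallels the paper, but the decisive step of the converse has a genuine gap. You assert that a single-valued, zero- and pole-free function of bounded characteristic in $\O$ with constant boundary modulus must be constant, and that ``this is exactly the content of DCT (equivalently Theorem \ref{thdct}(iii))''. It is not: Theorem \ref{thdct}(iii) concerns the extremal functions $\cW_\a$ as $\a\to 0$, and constancy of your $F$ does not follow from it. Constant boundary modulus together with absence of zeros and poles still leaves room for a ratio of singular inner functions, and for non-Smirnov behavior; boundary modulus identities on $\sE$ are blind to a singular inner factor of $m_+$, so your comparison of $|m_+|^2=R_1/R_0$ with the corresponding relations for $v_\a,v_{\a+\fj}$ cannot exclude it. Even the preliminary claim that $m_+$ is of bounded characteristic in the infinitely connected domain $\O$ (rather than merely in $\bbC\setminus\bbR_+$) is nontrivial. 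The paper closes exactly this hole by importing two external results: Theorem D of \cite{SY97} (a reflectionless $m_+$ is of bounded characteristic in $\O$ and its inner part has no singular component), and the absolute-continuity theorem of \cite{PoRem,Yud11} (under Widom plus DCT the measures attached to the functions \eqref{11oct10} are absolutely continuous on $\sE$, which yields \eqref{11oct11} and the point-mass bookkeeping of Theorem \ref{thoct12}). Your plan needs these statements, or proofs of them; without them the step ``$F$ is constant'' fails.

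There are also two smaller defects on the ``if'' side. The identity you invoke, $\overline{v_{-\gamma}(\xi)}\,v_\gamma(\xi)+\overline{v_{-\gamma+\fj}(\xi)}\,v_{\gamma+\fj}(\xi)=0$ a.e.\ on $\sE$, is false: by Lemma \ref{l15oct} both terms are positive multiples of $V_\gamma(\xi)V_{\gamma+\fj}(\xi)$, and the correctly normalized combination is the Wronskian constant \eqref{b12apr7}, not zero; fortunately the reflectionless relation \eqref{15oct4} follows directly from $\overline{V_\a(\xi)}=V_{\fj-\a}(\xi)$ and needs no such identity. Second, the claim that $\lim_{\l\to-0}\fm_+^\a(\l)=0$ is ``immediate from the factor $i\sqrt\l$ and $v_\a(-0)\ne0$'' is not available: as the paper's own remark points out, $0$ need not be a regular point (e.g.\ geometric-progression gaps), so $v_\a$ and the ratio $v_{\a+\fj}/v_\a$ need not have limits at $-0$; in the paper condition (b) of membership in $m_0(\sE)$ is again extracted from \eqref{11oct11}, i.e.\ from the absolute-continuity theorem. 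The Herglotz/Stieltjes property via the nested Weyl disks of Theorem \ref{th11oct1} is fine and is essentially the paper's route for that direction.
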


We describe the collection $\{\fm_+^\a(\l)\}_{\a\in\pi_1(\O)^*}$ as the Weyl-Titchamrsh functions of canonical systems.

\begin{theorem}\label{thm2} The following limit exists
\begin{equation}\label{15oct10}
\frac{\U^\a(x)}{\U^\a(0)}:=\lim_{\l\to - 0}\frac{v_{\a-\eta x}(\l)}{v_{\a}(\l)}
=\lim_{\l\to - 0}\lim_{\l_0\to - \infty}\frac{k^{\a-\eta x}(\l,\l_0)k^{\a}(\l_*,\l_0)}{k^{\a-\eta x}(\l_*,\l_0)k^{\a}(\l,\l_0)}
\end{equation}
and can be given explicitly as
\begin{align}\label{28sep1t}
\nonumber
\U^\a(x)&={\U_{\l_*}^\a(x)}=
\sqrt{k^{\a-\eta x}(\l_*,\l_*)+k^{\a+\fj-\eta x}(\l_*,\l_*) }\\
&\times  \exp\frac 1 2\int_0^ x \frac{d\, e^{-2\xi\Im\t_*}\left(k^{\a+\fj-\eta\xi}(\l_*,\l_*)-k^{\a-\eta \xi}(\l_*,\l_*) \right)}{ e^{-2\xi\Im\t_*}
\left(k^{\a+\fj-\eta\xi}(\l_*,\l_*)+k^{\a-\eta \xi}(\l_*,\l_*) \right)}.
\end{align}
Let
$$
\cJ=\begin{bmatrix}0&1\\-1&0
\end{bmatrix},\quad \cT_\a(x,\l)=\begin{bmatrix}\tau^{\a+\fj}(x) &0\\ 0&\l\tau^\a(x)\end{bmatrix},
\quad \tau^\a(x)=\frac i{\sqrt{\l_*}}\int_0^x\frac{e^{2\xi\Im\t_*}d\vk^{\a}(\xi)}{\U^\a(\xi)^2},
$$
 and $\fA_\a(\l,x)$ be the family of the transfer matrices of the canonical system given in the integral form
\begin{equation}\label{15oct8}
\fA_{\a}(\l,x)\cJ=\cJ-\int_0^x\fA_\a(\l,\xi)d\cT_\a(\l,\xi),\quad
\fA(x,\l)=\begin{bmatrix}\fa^\a_{11}&\fa^\a_{12}\\
\fa^\a_{21}&\fa^\a_{22}
\end{bmatrix}(\l,x).
\end{equation}
Then $\fm^\a_+(\l)$, defined in \eqref{15oct5}, is the  corresponding Weyl-Titchamrsh function. That is, for an arbitrary $\l\in\O$, its value is the unique intersection point of the nesting Weyl circles
\begin{equation}\label{15oct9}
\fm^\a_+(\l)=\lim_{x\to\infty}\frac{\fa^\a_{22}(\l,x)w-\fa^\a_{21}(\l,x)}{-\fa^\a_{12}(\l,x)w+\fa^\a_{11}(\l,x)},\quad 
w\in\bbR_+\cup\{\infty\}.
\end{equation}
\end{theorem}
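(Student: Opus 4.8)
\textbf{Proof plan for Theorem \ref{thm2}.}

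The plan is to build everything on top of the Fourier transform $\cF^\a$ from Theorem \ref{thm1} and the parametrization of $m_0(\sE)$ from Theorem \ref{th9}, treating the canonical system as the "infinitesimal generator" of the family of shifts $\a\mapsto\a-\eta x$ on the scale of spaces $e^{ix\t}E^2_\O(\a-\eta x)$. First I would establish the existence of the limit \eqref{15oct10}: by Theorem \ref{thm1}, $v_{\a-\eta x,\l_*}(\l)$ depends continuously on the character, and the reproducing kernel identity together with the DCT (through clause (ii) of Theorem \ref{thdct}) gives enough control to pass to the limit $\l\to-0$; the double-limit form follows by unwinding the definition \eqref{pat8oct2} of $v_\a$. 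The explicit formula \eqref{28sep1t} should then be extracted by the same device that produced \eqref{pat8oct3}: differentiate the normalization $k^\a(\l_*,\l_*)=\|k^\a_{\l_*}\|^2$ along the ray $\a-\eta\xi$, recognize the logarithmic derivative appearing in the integrand of \eqref{28sep1t} as $d\log\U^\a/d\xi$ up to the square-root prefactor, and integrate. The prefactor $\sqrt{k^{\a-\eta x}(\l_*,\l_*)+k^{\a+\fj-\eta x}(\l_*,\l_*)}$ is forced by clause (iv) of Theorem \ref{thdct}: the orthogonal complement $L^2_{\pd\O}\ominus E^2_\O(\a-\eta x)$ is $\overline{E^2_\O(\fj-\a+\eta x)}$, so the "full" space at parameter $x$ decomposes with the two reproducing-kernel norms adding up, which is exactly what makes $\U^\a(x)$ the correct normalizing factor for the moving frame.

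Next I would identify the canonical system itself. The two entries of the transfer matrix are, by the standard de Branges dictionary, the pair $(A,B)$ of real entire (here: real character-automorphic in a rotated coordinate) functions whose ratio runs through the Weyl disk; concretely I expect $\fa^\a_{11},\fa^\a_{12}$ (resp. $\fa^\a_{21},\fa^\a_{22}$) to be obtained from $v_{\a-\eta x}(\l)$ and $v_{\a+\fj-\eta x}(\l)$ after dividing by $\U^\a(x)$ and multiplying by the cumulative phase $e^{ix(\t(\l)-\t_*)}$, so that the columns of $\fA_\a(\l,x)$ are precisely the images under $\cF^\a$ of the two "endpoint" vectors of $L^2_{d\vk^\a}|_{[0,x]}$. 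Granting that, the integral equation \eqref{15oct8} is a reformulation of the fundamental identity $\frac{\partial}{\partial x}\big(e^{ix\t}v_{\a-\eta x}\big)=(\text{rank-one in }\l)\cdot(\text{previous data})$, i.e. the statement that the derivative of the reproducing kernel $k^{\a-\eta x}_{\l_*}$ along the character ray is governed by $k^{\a-\eta x}(\l_*,\l_*)$ alone — which is again a DCT consequence (it is the continuous analogue of the three-term recurrence hidden in \eqref{4oct3}). The coefficient matrix $\cT_\a(x,\l)=\Diag[\tau^{\a+\fj}(x),\l\tau^\a(x)]$ is diagonal with the two $\tau$'s being the accumulated "masses" $\int_0^x e^{2\xi\Im\t_*}d\vk^\a(\xi)/\U^\a(\xi)^2$; the factor $e^{2\xi\Im\t_*}=|e^{i\xi\t_*}|^{-2}$ and the $1/\U^2$ are exactly the weights that turn $d\vk^\a$ into the spectral measure seen in the moving frame, and the appearance of $\l$ in the second slot reflects that one of the two defect functions carries the extra factor $i\sqrt\l$ visible in \eqref{15oct5}.

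Finally, the Weyl-Titchmarsh identification \eqref{15oct9}. Here I would argue that for each fixed $\l\in\O$ the Möbius image of $\bbR_+\cup\{\infty\}$ under the matrix $\fA_\a(\l,x)$ is a nested family of disks (this is automatic from $\cJ$-contractivity of $\fA_\a$, which in turn follows from $d\cT_\a\ge0$ for $\l\in\bbC_+$), and that its intersection is a single point because the total mass $\vk^\a(\infty)=k^\a(\l_*,\l_*)$ is finite but the system is in the limit-point case at $+\infty$ — the latter being precisely where $\left.\cF^\a L^2_{d\vk^\a}\right|_{\bbR_+}=E^2_\O(\a)$ (the half-line part of Theorem \ref{thm1}) is used, since limit-point at infinity is equivalent to the half-line transform landing onto a space with no room for a second solution. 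It then remains to check that this intersection point equals $\fm^\a_+(\l)=i\sqrt\l\, v_{\a+\fj}(\l)/v_\a(\l)$; I would do this by verifying that the vector $\big(1,\fm^\a_+(\l)\big)^{t}$, transported back by $\fA_\a(\l,x)^{-1}$, stays bounded as $x\to\infty$ — equivalently that $e^{ix\t(\l)}v_{\a-\eta x}(\l)\to 0$ exponentially for $\Im\l>0$ while the $m_-$-combination does not — and this dichotomy is the analytic heart of the reflectionless condition \eqref{15oct4}. I expect the main obstacle to be precisely this last matching step: proving the requisite exponential decay of $e^{ix\t(\l)}v_{\a-\eta x,\l_*}(\l)$ uniformly on compact subsets of $\O$, which needs a quantitative version of the continuity of $v_\a$ in $\a$ (hence a quantitative use of DCT via clause (iii), the estimate $\cW_\a\to1$) rather than the qualitative statement supplied by Theorem \ref{thm1}; everything upstream of it is bookkeeping with reproducing kernels and the orthogonal-decomposition clause (iv).
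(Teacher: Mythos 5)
Your overall architecture is the same as the paper's (build a moving frame $e^{ix\t}v_{\a-\eta x}/\U^\a$ out of the Fourier integral, read the canonical system off a first-order differential identity in $x$, and identify $\fm_+^\a$ through the nesting Weyl circles of the explicit transfer matrix), but there is a genuine gap at the very first step, and it is the step the whole theorem hinges on. You claim the existence of the limit $\lim_{\l\to-0}v_{\a-\eta x}(\l)/v_\a(\l)$ follows from continuity of $v_\a$ in the character plus DCT control via clause (ii) of Theorem \ref{thdct}. That cannot work: continuity on the compact group $\pi_1(\O)^*$ (Lemma \ref{l1oct10}) controls nothing as $\l$ approaches the boundary point $0$, and if a continuity argument of this kind were valid it would prove that $\lim_{\l\to-0}v_\b(\l)/v_\a(\l)$ exists for \emph{all} pairs $\b,\a$, i.e.\ that $0$ is a regular point --- which the paper's remark explicitly says is false, e.g.\ for a geometric-progression set $\sE$. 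The existence of the limit along the special ray $\b=\a-\eta x$, and simultaneously the closed formula \eqref{28sep1t}, is obtained in the paper from the integral identity \eqref{b13apr1} (a consequence of \eqref{pa8oct1} together with \eqref{18apr7}): one takes its logarithmic differential, integrates to get \eqref{13oct1}, evaluates at $\l=\l_*$ to get \eqref{13oct2}, and only then lets $\l\to-0$, where the $\sqrt{\l}$-weighted term drops out. Your alternative device for \eqref{28sep1t} (``differentiate $k^\a(\l_*,\l_*)$ along the ray, prefactor forced by clause (iv)'') neither produces the $\l\to-0$ limit nor pins down the prefactor; the quantitative input really is the Fourier-integral identity itself, not the orthogonal-decomposition clause.

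A secondary point: you locate the ``main obstacle'' in the wrong place. The exponential decay you worry about is immediate, since $|e^{ix\t(\l)}|=e^{-x\cM(\l)}$ with $\cM(\l)>0$, while $\cV_{\a-\eta x}(\l)$ stays bounded and nondegenerate uniformly in $x$ by continuity on the compact character group; consequently the limit-point statement and the Weyl-circle limit are already contained in Theorem \ref{th11oct1}, read off directly from the multiplicative formula \eqref{27apr4} --- no quantitative version of clause (iii) is needed. In the paper, the proof of Theorem \ref{thm2} at that stage is only the rescaling \eqref{15oct1} converting $m_+^\a$ into $\fm_+^\a$, together with the differential identity of Lemma \ref{l15oct1} (proved again from \eqref{b13apr1} via an integrating factor), which is what actually yields the Hamiltonian $\cT_\a$ with the weights $e^{2\xi\Im\t_*}/\U^\a(\xi)^2$; your sketch posits this identity but does not derive it.
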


Being objects of the general de Branges theory, the transfer matrices have  some standard properties. They form a monotonic family of \textit{entire}  matrix functions $\cJ$-contractive in the upper half plane,
$$
\frac{\cJ-\fA_\a(\l,x)\cJ\fA(\l,x)^*}{\l-\overline{\l}}\ge 0,
$$
also $\overline{\fA(\bar \l,x)}=\fA(\l,x)$, $\det\fA(\l,x)=1$.
Note that passing from the class $\cS_0$ to the class of symmetric Nevanlinna functions, see \eqref{15oct3}, we pass to the family of transfer matrices
$$
\fB(\mu,x)=\begin{bmatrix}\mu&0\\0 &1\end{bmatrix}\fA(\mu^2,x)
\begin{bmatrix}1/\mu&0\\0 &1\end{bmatrix},
$$
which satisfy the canonical system
with a diagonal Hamiltonian, i.e.,
\begin{equation*}
\fB_{\a}(\mu,x)\cJ=\cJ-\mu \int_0^x\fB_\a(\mu,\xi)\begin{bmatrix}d\tau^{\a+\fj}(\xi) &0\\
0&d\tau^\a(\xi)
\end{bmatrix}.
\end{equation*}
The featured  properties are given in the following statement. 

\begin{corollary}\label{c15oc1}
1. To find the argument $x$ of the entire matrix function $\fA(\l,x)$ we consider its entries as functions of bounded characteristic  in the domain $\O$,
after that we  compute the ``exponential type" of these functions, i.e.,
\begin{equation}\label{15oc1}
x=\lim_{\l\to-\infty}\frac{\log \|\fA(\l,x)\|}{\cM(\l)}.
\end{equation}
2. Assume that
\begin{equation}\label{b13apr3}
\lim_{\l\to-\infty}\frac{\cM(\l)}{\sqrt{|\l|}}=0.
\end{equation}
Then, the generating canonical system measures $\tau^\a$ and $\tau^{\a+\fj}$ are mutually singular.
\end{corollary}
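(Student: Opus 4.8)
The plan is to treat the two items in turn, both resting on the explicit description of $\fA_\a(\l,x)$ supplied by Theorem \ref{thm2}.

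\textbf{Item 1.} First I would substitute the explicit entries of $\fA_\a(\l,x)$, read off from \eqref{15oct8} together with \eqref{28sep1t} and the formulas for $\tau^\a$ and $\U^\a$, and note that each entry $\fa^\a_{ij}(\l,x)$ is an algebraic combination — sums, products, quotients — of the functions $e^{\pm i x\t(\l)}$, of the normalized kernel ratios $v_{\a-\eta\xi}(\l)$ with $0\le\xi\le x$, and of the $\l$-independent amplitudes built from $\U^\a$, $\vk^\a$, $e^{\xi\Im\t_*}$. Since the $v_\beta$ and the reproducing kernels lie in $N_+(\O)$, every $\fa^\a_{ij}(\cdot,x)$ is of bounded characteristic in $\O$, so its $\cM$-mean type $\lim_{\l\to-\infty}\log|\fa^\a_{ij}(\l,x)|/\cM(\l)$ is well defined. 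Now $|e^{\pm i x\t(\l)}|=e^{\mp x\cM(\l)}$, so the factor $e^{ix\t}$ contributes $\cM$-type $-x$ and $e^{-ix\t}$ contributes $+x$, while every remaining factor is outer-like and contributes $\cM$-type $0$. The point requiring care is that these amplitude factors neither vanish identically nor decay faster than any $\cM$-subexponential rate as $\l\to-\infty$: for $v_\beta$ this follows from the normalization $v_\beta(\l_*)=1$ and the extremal (minimal-inner-part) property of the reproducing kernel, which rules out a spurious factor $e^{ic\t}$, $c>0$; for the $\U$-amplitudes it follows from their positivity in \eqref{28sep1t}. Hence $\|\fA_\a(\l,x)\|$ is governed by the entry carrying $e^{-ix\t(\l)}$, whose coefficient does not degenerate at $-\infty$, so $\log\|\fA_\a(\l,x)\|= x\,\cM(\l)+o(\cM(\l))$, and \eqref{15oc1} follows on dividing by $\cM(\l)$ and letting $\l\to-\infty$.

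\textbf{Item 2.} Here I would pass to the symmetric (Nevanlinna) picture. As recorded just before the statement, $\fB(\mu,x)=\operatorname{diag}(\mu,1)\,\fA(\mu^2,x)\,\operatorname{diag}(\mu^{-1},1)$ is an entire matrix function of $\mu$ solving the \emph{diagonal}-Hamiltonian canonical system with Hamiltonian measure $\operatorname{diag}(d\tau^{\a+\fj},d\tau^\a)$ and spectral parameter $\mu$. Its entries differ from those of $\fA(\cdot,x)$ by bounded powers of $\mu$, so $\log\|\fB(\mu,x)\|=\log\|\fA(\mu^2,x)\|+O(\log|\mu|)$; combining Item 1 with hypothesis \eqref{b13apr3} and $|\mu|=\sqrt{|\l|}$ for $\l=\mu^2\in\bbR_-$ gives
\[
\lim_{\mu\to i\infty}\frac{\log\|\fB(\mu,x)\|}{|\mu|}=x\,\lim_{\l\to-\infty}\frac{\cM(\l)}{\sqrt{|\l|}}=0 .
\]
Since $\fB(\cdot,x)$, being a canonical transfer matrix, is of the Cartwright class, its exponential type equals this imaginary-axis mean type, hence $\fB(\cdot,x)$ has exponential type $0$ for every $x$. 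Now I invoke the measure-theoretic Krein–de Branges formula for the exponential type of a diagonal-Hamiltonian canonical transfer matrix: with $\nu=\tau^{\a+\fj}+\tau^\a$ and $f_1=d\tau^{\a+\fj}/d\nu$, $f_2=d\tau^\a/d\nu$, the exponential type of $\fB(\cdot,x)$ equals $\int_0^x\sqrt{f_1 f_2}\,d\nu$. Its vanishing for all $x$ forces $f_1 f_2=0$ $\nu$-a.e.; since $f_1+f_2=1$, the set $A=\{f_1=1\}$ satisfies $\tau^{\a+\fj}(\bbR_+\setminus A)=0=\tau^\a(A)$, i.e., $\tau^{\a+\fj}\perp\tau^\a$.

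\textbf{Main obstacle.} For Item 1 the delicate point is the non-degeneracy of the amplitude coefficients as $\l\to-\infty$ — equivalently, that the relevant $E^2_\O$-class factors carry no extra inner part $e^{ic\t}$ — which is precisely where the Widom-plus-DCT hypotheses enter, through the continuity of $k^\a(\l_0,\l_0)$ in $\a$ and the extremal characterization of reproducing kernels; this also guarantees that the quantity in \eqref{15oc1} is a genuine limit, not merely a limit superior. For Item 2 the only input beyond Item 1 is the measure-valued version of the Krein–de Branges type formula together with the exact scaling of exponential type under $\l\mapsto\sqrt\l$; once these are in place, the mutual singularity is immediate.
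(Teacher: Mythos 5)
Your Item 1 has a genuine gap at exactly the point you flag as the ``main obstacle''. The whole difficulty of \eqref{15oc1} is to show that the amplitude factors --- the functions $V_\b(\l)$ (equivalently $v_\b$, or the matrices $\cV_\b(\l)^{\pm1}$ of \eqref{27apr4}) --- have zero mean type with respect to the Martin function, i.e.\ $\lim_{\l\to-\infty}\log\|\cV_{\b}(\l)^{\pm 1}\|/\cM(\l)=0$. Your proposed justification (normalization $v_\b(\l_*)=1$ plus the ``extremal, minimal-inner-part'' property of the reproducing kernel, which rules out a factor $e^{ic\t}$) does not prove this: normalization at an interior point gives no control of growth or decay at the Martin boundary point $\infty$, and the absence of a singular inner factor $e^{ic\t}$ only yields zero mean type if one already knows that, in a Widom domain with DCT, the $\cM$-mean type of a function of bounded characteristic is computed by its inner part at infinity --- but that statement is precisely the nontrivial theorem being used, not a formal consequence of outer-ness (the infinite Blaschke-type product $\prod_j\Phi_{\l_j}^{(1+\e_j)/2}$ inside $V_\b$ could a priori decay at a rate comparable to $\cM$). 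In the paper this step is exactly where Theorems 2 and 3 of \cite{VoYu16} are invoked, and the paper itself describes the companion limit as ``an essentially more delicate question''; the paper also works with the matrix norms of $\cV_\a^{\pm1}$ (via the Wronskian identity) rather than entrywise, which avoids your unaddressed issue of possible cancellations when you claim the norm is ``governed by the entry carrying $e^{-ix\t}$''. So Item 1 as written assumes the key analytic input rather than proving or properly citing it.

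Your Item 2 is a genuinely different route from the paper's and is essentially sound modulo Item 1: you deduce zero exponential type of $\fB(\mu,x)$ from Item 1 together with \eqref{b13apr3} and Cartwright-class/Krein arguments, and then apply the measure-valued Krein--de Branges type formula $\mathrm{type}=\int_0^x\sqrt{f_1f_2}\,d\nu$ to force $f_1f_2=0$, hence $\tau^\a\perp\tau^{\a+\fj}$. The paper instead argues directly from the integral identity \eqref{16oct1} of the Main Lemma, via an arithmetic--geometric mean inequality for the densities $\rho^\a,\rho^{\a+\fj}$, which proves the needed type statement in situ without importing the Krein--de Branges theorem or the Cartwright-class machinery (and without using Item 1 as such). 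Your version buys brevity by quoting standard canonical-system theory, but it inherits the gap above, since both the zero-type claim and the paper's \eqref{16oct1} ultimately rest on the mean-type results of \cite{VoYu16}, which your proposal does not supply.
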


\begin{remark}
A set $\sE$ is called of Akhiezer-Levin type if
\begin{equation*}
\lim_{\l\to-\infty}\frac{\cM(\l)}{\sqrt{|\l|}}>0.
\end{equation*}
Such kind of domains are widely studied in particular  in a connection with 1-D Shr\"odinger operators (i.g. the Marchenko-Ostrovskii class
\cite{Mar}, see also \cite{SY95}).
A quite general result, including the KdV hierarchy, was presented recently in \cite{EVY}. That is why we are mostly interested in the case \eqref{b13apr3}. As the simplest example, one can have in mind a set which is formed by geometric progressions
$a_n=\rho^n a_0$, $b_n=\rho^n b_0$, $n\in\bbZ$, $0<a_0<b_0<\rho a_0$.
\end{remark}

\begin{remark} Let us say that 
0 is a regular point  for $\sE$ if the following limit
$$
\lim_{\l\to-0}\frac{v_{\b}(\l)}{v_{\a}(\l)}
$$
exists for all $\a,\b\in\pi_1(\O)^*$. Then we can define
$$
\Xi(\a)=\lim_{\l\to-0}\frac{v_{\a}(\l)}{v_{0}(\l)}, \quad 0=0_{\pi_1(\O)^*},
$$
and represent the limit \eqref{15oct10} in terms of this function on the group of characters,
$$
\frac{\U^{\a}(x)}{\U^{\a}(0)}=\frac{\Xi(\a-\eta x)}{\Xi(\a)}.
$$
Especially in the finite gap case one gets explicit formulas in terms of theta functions \cite{MumT2}.
However, say, for a geometric progression set $\sE$, 0 is not a regular point, nevertheless 
$\U_{\a}(x)$ has perfect sense and represents a continuos function in $x$. On the other hand regularity of 0 
is, of course, not an extraordinary property of a set $\sE$. The simplest case of regularity: $[0,\ve]\subset\sE$ for some $\ve>0$.
\end{remark}

\section{Reproducing kernels and Fourier Integral}

We describe reproducing kernels in terms of canonical products. First, we define the set of divisors
$$
\cD(E)=\{D=\{(\l_j,\e_j)\}_{j\ge1}:\ \l_j\in[a_j,b_j],\ \e_j=\pm 1\}
$$
with the identification $(a_j,1)=(a_j,-1)$ and $(b_j,1)=(b_j,-1)$, endowed with the product topology of circles. To $D\in \cD(E)$ we associate
\begin{equation}\label{5oct1}
V(\l,D)=\left(
\prod_{j\ge1}
\sqrt{\frac{(\l_*-a_j)(\l_*-b_j)}{(\l-a_j)(\l-b_j)}}\frac{\l-\l_j}{\l_*-\l_j}\frac {1}{\Phi_{\l_j}(\l)}
\right)^{\frac 1 2}\prod_{j\ge 1}\Phi_{\l_j}(\l)^{\frac{1+\e_j}2}.
\end{equation}
We note that the product in the brackets represents an outer function, therefore the square root of this product is well defined and represents a character automorphic function. The second factor is an inner function (Blaschke product) in the given domain. Alternatively, we can write
\begin{equation}\label{5oct2}
V(\l,D)=\sqrt{O(\l,D) I(\l,D)}, \quad \text{where}\quad I(\l,D)=\prod_{j\ge 1}\Phi_{\l_j}^{\e_j}
\end{equation}
and
\begin{equation}\label{5oct3}
O(\l,D)=e^{\int \left(\frac 1{\xi-\l}-\frac 1{\xi-\l_*}\right)\chi_D(\xi)d\xi}, \quad 
\chi_D(\xi)=\begin{cases} 1/2,&\xi\in(a_j,\l_j)\\
-1/2,&\xi\in(a_j,\l_j)\\ 0, &\text{otherwise}
\end{cases}
\end{equation}

\begin{definition}
We define the Abel map $\cA:\cD(E)\to\pi_1(\O)^*$ by
$$
\cA(D)=\text{character of}\ V(\l,D).
$$
\end{definition}
\begin{theorem}[see \cite{SY97}]
For a Widom domain $\O$ if DCT holds, then the Abel map is a homeomorphism. 
\end{theorem}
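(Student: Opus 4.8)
The plan is to show that the Abel map $\cA:\cD(E)\to\pi_1(\O)^*$ is a continuous bijection between compact Hausdorff spaces, hence a homeomorphism. Continuity is the easy direction: the product \eqref{5oct1} depends continuously on $D$ in the product topology (the critical points sum \eqref{3oct2} gives uniform convergence of the canonical products on compacta of $\O$ and, crucially, uniform control of the character), so $D\mapsto\cA(D)$ is continuous into $\pi_1(\O)^*$ with its natural (product) topology; since $\cD(E)$ is compact and $\pi_1(\O)^*$ Hausdorff, it suffices to prove that $\cA$ is a \emph{bijection}.

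For surjectivity I would use the extremal function $\cW_\a$ of the Widom theorem together with the DCT characterization in Theorem~\ref{thdct}. Given $\a\in\pi_1(\O)^*$, take $W=\cW_{\a-\fj/0}$ appropriately normalized; its inner part is a Blaschke product $\prod_j\Phi_{\l_j}^{\e_j}$ whose zeros $\l_j$ (with multiplicity one, by a counting argument in each gap using that $\cG(\cdot,\l_*)$ has exactly one critical point per gap of $\sE$ in the Denjoy/Widom setting) and whose outer part together realize the divisor $D$ with $\cA(D)=\a$. Equivalently, and more cleanly, one invokes the reproducing-kernel description: $k^\a_{\l_*}(\l)$ (or a ratio $k^\a_{\l_*}/k^0_{\l_*}$) is a function in $E^2_\O(\a)$ whose zero set in each gap is a single point $\l_j\in[a_j,b_j]$, and the pair $(\l_j,\e_j)$ — with $\e_j$ recording on which sheet the kernel changes sign — defines $D$ with $V(\cdot,D)$ carrying exactly the character $\a$. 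Here DCT is used to guarantee that $k^\a$ depends continuously on $\a$ (Theorem~\ref{thdct}(ii)) and is nonvanishing at $\l_*$, so the construction is well defined for every $\a$.

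Injectivity is the step I expect to be the main obstacle, and it is where DCT enters essentially. Suppose $\cA(D_1)=\cA(D_2)$. Then $F:=V(\cdot,D_1)/V(\cdot,D_2)$ is single-valued (character zero), and one wants to conclude $F\equiv\text{const}$, i.e. $D_1=D_2$. The function $F$ is a ratio of outer times Blaschke factors; write $F=F_1/F_2$ with $F_1,F_2\in E^1_\O$ having disjoint inner parts. The DCT \eqref{4oct2}, applied in the form of the reproducing property \eqref{4oct2} and the orthogonal decomposition Theorem~\ref{thdct}(iv) ($L^2_{\pd\O}\ominus E^2_\O(\a)=\overline{E^2_\O(\fj-\a)}$), forces that a character-automorphic function with a prescribed inner part and prescribed modulus on $\pd\O$ is \emph{unique}; since $|V(\l,D_1)|=|V(\l,D_2)|$ on $\pd\O$ (both equal the modulus of the outer part, the inner parts being unimodular there) and they carry the same character, uniqueness gives $V(\cdot,D_1)=V(\cdot,D_2)$ up to a unimodular constant, whence the inner parts coincide — so $\l_j$ and $\e_j$ agree for all $j$, i.e. $D_1=D_2$. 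The delicate point is that \emph{without} DCT this uniqueness can fail (the map becomes only surjective, not injective), so one must be careful to invoke DCT precisely at the moment one claims that matching modulus-plus-character pins down the inner divisor; the cleanest route is to reduce to the statement that $\dim\big(E^2_\O(\a)\ominus V(\cdot,D)E^2_\O(\a-\cA(D))\big)=0$, which is exactly the DCT-equivalent assertion that $V(\cdot,D)$ is a "maximal" inner-type factor, and this is where \cite{Has83}'s machinery (or Theorem~\ref{thdct}) does the work. Finally, combining: $\cA$ continuous + bijective + $\cD(E)$ compact + $\pi_1(\O)^*$ Hausdorff $\Rightarrow$ $\cA$ is a homeomorphism.
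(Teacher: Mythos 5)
You should note first that the paper does not prove this statement at all --- it is imported from \cite{SY97} --- so your argument has to stand on its own, and as it stands it has genuine gaps at both of the decisive steps. The topological frame is fine (a continuous bijection from the compact $\cD(E)$ onto the Hausdorff group $\pi_1(\O)^*$ is a homeomorphism, and continuity of $\cA$ is the easy part). The injectivity argument, however, rests on a false premise: you claim that $\cA(D_1)=\cA(D_2)$ forces $|V(\l,D_1)|=|V(\l,D_2)|$ on $\pd\O$ because ``both equal the modulus of the outer part''. But the outer part depends on the divisor: by \eqref{5oct2}--\eqref{5oct3}, $|V(\xi,D)|^2=O(\xi,D)=\exp\int\bigl(\tfrac1{\xi-\l}-\tfrac1{\xi-\l_*}\bigr)\chi_D$, and $\chi_{D_1}\ne\chi_{D_2}$ when $D_1\ne D_2$, so equality of characters gives no a priori control of the boundary moduli. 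The whole difficulty of injectivity is precisely that neither the outer nor the inner parts of $V(\cdot,D_1)$ and $V(\cdot,D_2)$ are known to agree; your ``same modulus plus same character pins down the inner divisor'' principle assumes away the problem, and the further reduction to $\dim\bigl(E^2_\O(\a)\ominus V(\cdot,D)E^2_\O(\a-\cA(D))\bigr)=0$ being ``the DCT-equivalent assertion'' is asserted, not proved.

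The surjectivity sketch is likewise not a proof. That the inner part of the Widom maximizer $\cW_\a$ is a Blaschke product with exactly one simple zero per gap and no singular factor is itself a nontrivial statement (and DCT-sensitive); counting critical points of $\cG(\cdot,\l_*)$ in the gaps says nothing directly about the zeros of $\cW_\a$. Your ``cleaner'' route through the reproducing kernel is circular in the present framework: the representation \eqref{5jan5} of $k^\a_{\l_0}$ in terms of $V_{\a-\frac12\b_0}$ presupposes that a divisor with character $\a-\frac12\b_0$ exists, i.e.\ exactly the surjectivity you are trying to establish. Even granting that $k^\a_{\l_*}$ has at most one zero in each gap, you must still show that the divisor read off from these zeros (with the correct signs $\e_j$) has character precisely $\a$; that identification is the actual content of the infinite-dimensional Jacobi inversion in \cite{SY97}, where the inverse map is built from reflectionless resolvent-type functions --- in the present paper this mechanism appears in Theorems \ref{th9} and \ref{thoct12}, which parametrize $m_0(\sE)$ both by $\bbR_+\times\pi_1(\O)^*$ and by $\bbR_+\times\cD(\sE)$. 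So both injectivity and surjectivity, which carry all the weight of the theorem, remain unproved in your proposal.
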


In particular, $D=D(\a)$ is uniquely defined by 
$
\cA(D)=\a\in\pi_1(\O)^*.
$ To simplify notation  we write
$$
O_\a(\l):=O(\l,D),\ I_\a(\l):=I(\l,D),\ V_\a(\l):=V(\l,D)\quad \text{for}\quad \a=\cA(D).
$$

\begin{lemma} \label{l15oct}
Let $D_*:=\{(\l_j,-\e_j)\}_{j\ge 1}$. Then 
\begin{equation}\label{5oct4}
V(\l,D_*)=V_{\fj-\a(D)}(\l),\ \l\in\O,\quad V(\l,D_*)=\overline{V(\l,D)}, \ \l\in\sE.
\end{equation}
\end{lemma}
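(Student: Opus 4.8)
The identity to establish is the pair of relations in \eqref{5oct4}: that the divisor $D_*=\{(\l_j,-\e_j)\}$ produces the canonical product $V(\l,D_*)=V_{\fj-\a(D)}(\l)$ on $\O$, and that on the boundary set $\sE$ this equals the complex conjugate $\overline{V(\l,D)}$. The natural approach is to work directly from the defining product \eqref{5oct1}--\eqref{5oct3} and track how flipping every sign $\e_j\mapsto-\e_j$ affects the inner and outer factors, together with the elementary observation that $\prod_j\Phi_{\l_j}(\l)$ times the bracketed outer factor is, up to the square‑root normalization, exactly the object whose character is $\fj$.

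First I would handle the identity on $\O$. Write $V(\l,D)=\sqrt{O(\l,D)I(\l,D)}$ as in \eqref{5oct2}. Flipping all signs sends $I(\l,D)=\prod_j\Phi_{\l_j}^{\e_j}$ to $I(\l,D_*)=\prod_j\Phi_{\l_j}^{-\e_j}=I(\l,D)^{-1}$, and it sends each local piece $\chi_D$ of \eqref{5oct3} to $-\chi_D$, hence $O(\l,D_*)=O(\l,D)^{-1}$. Therefore
\[
V(\l,D_*)=\sqrt{O(\l,D)^{-1}I(\l,D)^{-1}}=\frac{1}{V(\l,D)}\cdot\Bigl(\text{square-root branch factor}\Bigr),
\]
and to make this precise I would instead start from the form \eqref{5oct1}: the product inside the big brackets, call it $P(\l,D)$, already contains the full Blaschke factor $1/\Phi_{\l_j}(\l)$, so $V(\l,D)=\sqrt{P(\l,D)}\cdot\prod_j\Phi_{\l_j}^{(1+\e_j)/2}$. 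Under $\e_j\mapsto-\e_j$ the bracketed product $P$ is unchanged, while the second factor becomes $\prod_j\Phi_{\l_j}^{(1-\e_j)/2}$. Hence
\[
V(\l,D)\,V(\l,D_*)=P(\l,D)\prod_j\Phi_{\l_j}(\l),
\]
and the right-hand side is, by its very definition, $\bigl(\sqrt{P(\l,D)}\,\bigr)^2\prod_j\Phi_{\l_j}$, whose character one computes to be $\fj$ (the character of $\prod_j\Phi_{\l_j}$, i.e.\ of $\sqrt{\l}$ up to the normalization built into the bracket, is precisely $\fj$ since each $\Phi_{\l_j}$ contributes the half-period whose doubling gives the sign $-1$ appearing in the definition of $\fj$). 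So $V(\l,D)V(\l,D_*)$ is the canonical outer‑times‑inner product with divisor $D(\fj)$ in the appropriate normalization, which identifies its character as $\fj$, and therefore the character of $V(\l,D_*)$ is $\fj-\cA(D)$. Since $D_*\in\cD(E)$ and the Abel map is a homeomorphism (hence injective, by the cited theorem, under DCT), $V(\l,D_*)$ coincides with $V_{\fj-\a(D)}(\l)$ up to a unimodular constant; the normalization at $\l_*$ (the bracket in \eqref{5oct1} is built to have a definite value there, and $\Phi_{\l_j}(\l_*)$ is real positive) fixes the constant to be $1$.

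For the boundary relation on $\sE$, I would use that $|\Phi_{\l_j}(\xi)|=1$ for $\xi\in\sE$, so that $I(\xi,D_*)=\overline{I(\xi,D)}$ there, and that the outer factor $O(\l,D)$ built from the real density $\chi_D$ via \eqref{5oct3} satisfies $O(\xi-i0,D)=\overline{O(\xi+i0,D)}$, with the sign‑flip $\chi_{D_*}=-\chi_D$ in the relevant gaps matching exactly the passage from the $+i0$ to the $-i0$ boundary value; combining these and taking the square root with the branch consistent with the normalization at $\l_*\in\bbR_-$ gives $V(\xi,D_*)=\overline{V(\xi,D)}$ on $\sE$. The main obstacle I anticipate is bookkeeping of the square‑root branches: both $\sqrt{P(\l,D)}$ and the outer function $O(\l,D)^{1/2}$ are defined only up to sign, and one must verify that the branch forced by the normalization $V_\a(\l_*)>0$ is preserved under $D\mapsto D_*$ on $\O$ and is exchanged with complex conjugation (rather than picking up an extra sign) on each component of $\sE$. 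This is where one should invoke that the bracketed product in \eqref{5oct1} is genuinely \emph{outer} — so its square root has a single‑valued modulus and a canonically determined argument once pinned at $\l_*$ — and that flipping $\e_j$ is a deformation through $\cD(E)$ (the circle identifications $(a_j,\pm1)$, $(b_j,\pm1)$ guarantee continuity), so no branch can jump.
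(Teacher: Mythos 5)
Your plan for the identity in $\O$ can be salvaged, but the proposal contains a concrete error that breaks the boundary half of the lemma. Flipping $\e_j\mapsto-\e_j$ does \emph{not} send $\chi_D$ to $-\chi_D$: the density $\chi_D$ in \eqref{5oct3} is determined by the positions $\l_j$ alone (the displayed formula has a typo — the second case should read $\xi\in(\l_j,b_j)$), so $O(\l,D_*)=O(\l,D)$, not $O(\l,D)^{-1}$. This is forced by consistency of \eqref{5oct1} with \eqref{5oct2}: squaring \eqref{5oct1} gives $V(\l,D)^2=P(\l,D)\prod_j\Phi_{\l_j}(\l)^{1+\e_j}$ with $P$ the bracketed product, hence $O(\l,D)=P(\l,D)\prod_j\Phi_{\l_j}(\l)$, which contains no $\e_j$; only the inner part is inverted, $I(\l,D_*)=I(\l,D)^{-1}$. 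Notice also that your own computation $V(\l,D)\,V(\l,D_*)=P(\l,D)\prod_j\Phi_{\l_j}(\l)$ contradicts your earlier line $V(\l,D_*)=\sqrt{O^{-1}I^{-1}}\sim 1/V(\l,D)$: that product is exactly $O(\l,D)$, which is far from constant, so the two readings cannot both hold, and only the second is correct.

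Because of this, the mechanism you propose for the boundary relation (pairing a sign flip of $\chi$ with the passage from $\xi+i0$ to $\xi-i0$) has nothing to act on and does not yield $V(\xi,D_*)=\overline{V(\xi,D)}$. What actually does the work is that $O(\l,D)$ is \emph{positive} on $\sE$ (each normalized factor $\frac{\l-\l_j}{\sqrt{(\l-a_j)(\l-b_j)}}\cdot\frac{\sqrt{(\l_*-a_j)(\l_*-b_j)}}{\l_*-\l_j}$ is positive on $\bbR\setminus[a_j,b_j]$) while $|I(\l,D)|=1$ there; hence $V(\l,D)\overline{V(\l,D)}=O(\l,D)$ on $\sE$, and therefore $\overline{V(\l,D)}=O(\l,D)/V(\l,D)=\sqrt{O(\l,D)I(\l,D)^{-1}}=V(\l,D_*)$, the branch question being settled once and for all by the identity $V(\cdot,D)V(\cdot,D_*)=O(\cdot,D)$ in $\O$. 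For the first identity your overall route (compute the character of $V(\cdot,D)V(\cdot,D_*)$, then use injectivity of the Abel map) is the paper's, but your justification of the character is not right: the characters of the Blaschke factors $\Phi_{\l_j}$ are the harmonic-measure characters $\gamma\mapsto\o(\l_j,\sE_m)$ and are not half-periods in general. The correct reason is that $\sqrt{\l}\,O(\l,D)$ is single valued in $\O$ (across the gap $(a_m,b_m)$ both $\sqrt\l$ and the $m$-th factor of $O$ change sign, the remaining factors being analytic there), so the character of $O$ is $\fj$; consequently $\cA(D_*)=\fj-\cA(D)$, and by uniqueness of the divisor with a given character (the Abel map is injective under DCT) one gets $V(\l,D_*)=V_{\fj-\a(D)}(\l)$ exactly, with no extra unimodular constant to fix.
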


\begin{proof}
Note that $O(\l,D)$ assumes positive values on $\sE$, see \eqref{5oct3}, and $|I(\l,D)|=1$ here. Therefore
$$
\overline{V(\l,D)}V(\l,D)=O(\l,D), \quad \l\in\sE,
$$
and by \eqref{5oct2}
$$
\overline{V(\l,D)}=\frac{O(\l,D)}{V(\l,D)}=\frac{O(\l,D)}{\sqrt{O(\l,D) I(\l,D)}}=\sqrt{O(\l,D) I(\l,D)^{-1}}=V(\l,D_*).
$$
Further, $\sqrt{\l}O(\l,D)$ is single valued in the domain, hence $\fj$ is the character of $O(\l,D)$. In conjunction with the previous line this proves the first identity in 
\eqref{5oct4}. 
\end{proof}

\begin{lemma}\label{l18oct}
Let $\l_0\in\bbR_-$. By $\frac 1 2 \b_0$ we understand the character of the outer function $\sqrt{\Phi_{\l_0}(\l)/(\l-\l_0)}$. Then
\begin{equation}\label{5jan5}
k^\a_{\l_0}(\l)=\frac{\sqrt{\l_0}}{i}\frac{V_{\a-\frac 1 2\b_0}(\l)}{V_{\fj-\a+\frac 1 2\b_0}(\l_0)}\sqrt{\frac{\Phi_{\l_0}(\l)\Phi'_{\l_0}(\l_0)}{\l-\l_0}}.
\end{equation}
\end{lemma}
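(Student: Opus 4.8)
The plan is to identify the right‐hand side of \eqref{5jan5} as an element of $E^2_\O(\a)$ and then verify the reproducing property \eqref{4oct2} directly. First I would check the character. By Lemma~\ref{l15oct} the character of $V_{\fj-\a+\frac12\b_0}$ equals $\fj-\a+\frac12\b_0$, and since $\l_0\in\bbR_-$ this factor is a constant (it does not depend on $\l$), so it contributes nothing. The factor $\sqrt{\Phi_{\l_0}(\l)/(\l-\l_0)}$ has character $\frac12\b_0$ by definition, and $V_{\a-\frac12\b_0}$ has character $\a-\frac12\b_0$ by construction of the Abel map; the scalar prefactor $\sqrt{\l_0}/i$ and $\sqrt{\Phi'_{\l_0}(\l_0)}$ are constants. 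Adding up, the candidate on the right has character exactly $\a$, as required.

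Next I would establish membership in $E^2_\O(\a)$ and compute the norm. The function $V_{\a-\frac12\b_0}$ is bounded with $|V_{\a-\frac12\b_0}|^2 = |O_{\a-\frac12\b_0}|$ on $\sE$, and $\Phi_{\l_0}$ is inner, so on $\sE$ one has $\bigl|k^\a_{\l_0}(\xi)\bigr|^2 = \dfrac{|\l_0|}{|\xi-\l_0|}\,|O_{\a-\frac12\b_0}(\xi)|\,|\Phi'_{\l_0}(\l_0)|$, which is integrable against $d\xi/\sqrt\xi$ since $\xi\mapsto 1/(\xi-\l_0)$ is bounded on $\bbR_+$ and $O_{\a-\frac12\b_0}$ contributes a finite harmonic‐measure–type mass. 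So the right side lies in $E^2_\O(\a)$. To pin down the constant I would reduce \eqref{4oct2} to a genuine Cauchy integral: for $F\in E^2_\O(\a)$, the product $\overline{k^\a(\xi,\l_0)}F(\xi)/\sqrt\xi$, after inserting $\overline{V(\xi,D)}=V(\xi,D_*)$ from \eqref{5oct4}, becomes the boundary value of a function in $E^1_\O(\fj)$ times $F$'s multivaluedness cancelled — more precisely the integrand extends to an $E^1_\O(\fj)$ function with a single simple pole at $\l_0$, whose residue is (a constant multiple of) $F(\l_0)$. Then DCT \eqref{4oct2} (in the form of the residue theorem it yields) gives $\langle F,k^\a_{\l_0}\rangle = c\,F(\l_0)$, and the specific normalization constants $\sqrt{\l_0}/i$, $\sqrt{\Phi'_{\l_0}(\l_0)/(\l-\l_0)}$, $1/V_{\fj-\a+\frac12\b_0}(\l_0)$ are chosen precisely so that $c=1$; this is checked by a residue computation at $\l=\l_0$ using $\Phi_{\l_0}(\l_0)=0$, $\Phi_{\l_0}'(\l_0)\neq 0$, together with $V(\l_0,D)\,V(\l_0,D_*)=O(\l_0,D)>0$.

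The one genuinely delicate point is the passage from the DCT property as stated in \eqref{4oct2} to the "Cauchy with poles" statement that yields the reproducing identity: one must check that the integrand built from $k^\a_{\l_0}$, $F$ and $\sqrt\xi$ really does extend from the boundary to a function that is in $E^1_\O(\fj)$ away from $\l_0$ and has only the expected simple pole there (no extra singularities from the infinite Blaschke product $I_{\a-\frac12\b_0}$, and correct behavior at $\infty$ and at the points $a_j,b_j$). Here I would use that $V(\l,D)V(\l,D_*)=O(\l,D)$ is outer, that the Widom/DCT hypotheses make all the relevant canonical products have controlled boundary behavior, and that $F\in E^2_\O(\a)$, $\overline{V(\cdot,D_*)}\cdot(\text{bounded})\in E^2_\O(\fj-\a)$ so their product is in $E^1_\O(\fj)$ by the Cauchy–Schwarz–type multiplication of Smirnov classes — the relevant closure statement being exactly item (iv) of Theorem~\ref{thdct}. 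Once that extension is justified, the rest is the residue bookkeeping described above, and uniqueness of the reproducing kernel finishes the proof.
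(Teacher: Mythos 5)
Your proposal is correct and follows essentially the same route as the paper: the paper's one-line proof verifies the reproducing property by conjugating the kernel on $\sE$ via Lemma~\ref{l15oct} and evaluating the resulting Cauchy-type integral through DCT, with the simple pole of $\sqrt{\Phi'_{\l_0}(\l_0)/((\xi-\l_0)\Phi_{\l_0}(\xi))}$ at $\l_0$ producing exactly $F(\l_0)$ --- precisely your residue bookkeeping. One minor caveat: $V_{\a-\frac12\b_0}$ is not bounded on $\sE$ in general (its outer part has fourth-root singularities at the gap edges), but since you only use $|V_{\a-\frac12\b_0}|^2=|O_{\a-\frac12\b_0}|$ on $\sE$ together with integrability against $d\xi/\sqrt\xi$, this does not affect the argument.
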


\begin{proof}
By DCT and Lemma \ref{l15oct} we have
\begin{equation*}
\langle F,k^\a_{\l_0}\rangle=\frac {\sqrt{\l_0}}{2\pi i}\oint_{\sE}\frac{V_{\fj-\a+\frac 1 2\b_0}(\xi)}{V_{\fj-\a+\frac 1 2\b_0}(\l_0)}\sqrt{\frac{\Phi'_{\l_0}(\l_0)}{(\xi-\l_0)\Phi_{\l_0}(\xi)}}
F(\xi)\frac{d\xi}{\sqrt{\xi}}=F(\l_0).
\end{equation*}
\end{proof}

A proof of Theorem \ref{thm1} is based essentially 
 on a  combination of Lemma \ref{l1oct10} and Theorem \ref{thm1bis} below.

\begin{lemma}\label{l1oct10} For a fixed $\l\in \bbC\setminus \bbR_+$ the function ${V_{\a}(\l)}$ is continuous on the compact Abelian group $\pi_1(\O)^*$. 
The following limit exists uniformly in $\a$.
$$
v_{\a,\l_*}(\a)=\lim_{\l_0\to-\infty}\frac{k^\a(\l,\l_0)}{k^{\a}(\l_*,\l_0)}=\frac{V_{\a}(\l)}{V_{\a}(\l_*)}.
$$
\end{lemma}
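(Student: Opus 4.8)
The plan is to establish the claimed continuity of $V_\a(\l)$ first, and then leverage Lemma \ref{l18oct} to pass to the limit $\l_0\to-\infty$ in the ratio of reproducing kernels. For the continuity statement, recall the decomposition $V(\l,D)=\sqrt{O(\l,D)I(\l,D)}$ from \eqref{5oct2}. For fixed $\l\in\bbC\setminus\bbR_+$ the map $D\mapsto \Phi_{\l_j}(\l)$ is continuous in each coordinate $\l_j\in[a_j,b_j]$ (with the endpoint identifications built into $\cD(E)$, since $\Phi_{a_j}$ and $\Phi_{b_j}$ have unimodular boundary values matching the gap endpoints), and the Widom condition \eqref{3oct2} forces $\sum_j \cG(c_j,\l_*)<\infty$, hence $\sum_j(1-|\Phi_{\l_j}(\l)|)<\infty$ with a bound uniform over $D\in\cD(E)$; this gives locally uniform convergence of the Blaschke product $I(\l,D)$ and of the outer product in \eqref{5oct1}, so $V(\cdot,D)$ is continuous on the compact group $\cD(E)$ in the product topology. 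Since the Abel map $\cA$ is a homeomorphism $\cD(E)\to\pi_1(\O)^*$ under DCT (the theorem cited from \cite{SY97}), transporting along $\cA^{-1}$ shows $\a\mapsto V_\a(\l)$ is continuous on $\pi_1(\O)^*$, which is exactly the first assertion.

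Next I would compute the ratio $k^\a(\l,\l_0)/k^\a(\l_*,\l_0)$ using the explicit formula \eqref{5jan5}. Writing both numerator and denominator via Lemma \ref{l18oct}, the factors $V_{\fj-\a+\frac12\b_0}(\l_0)$ and $\sqrt{\l_0/i}$ and $\sqrt{\Phi'_{\l_0}(\l_0)}$ cancel, leaving
\begin{equation*}
\frac{k^\a(\l,\l_0)}{k^\a(\l_*,\l_0)}=\frac{V_{\a-\frac12\b_0}(\l)}{V_{\a-\frac12\b_0}(\l_*)}\sqrt{\frac{\Phi_{\l_0}(\l)/(\l-\l_0)}{\Phi_{\l_0}(\l_*)/(\l_*-\l_0)}}.
\end{equation*}
As $\l_0\to-\infty$ the square-root factor tends to $1$: indeed $(\l_*-\l_0)/(\l-\l_0)\to1$, and $\Phi_{\l_0}(\l)/\Phi_{\l_0}(\l_*)=e^{i(\t_{\l_0}(\l)-\t_{\l_0}(\l_*))}$, where by \eqref{84oct5} one has $\t_{\l_0}(\l)=\cG(\l_*,\l_0)\,\t(\l)(1+o(1))$ with $\cG(\l_*,\l_0)\to0$, so $\t_{\l_0}(\l)-\t_{\l_0}(\l_*)\to0$ (using $\t(\l_*)$ normalized and $\cG(\l_*,\l_0)\to 0$). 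Meanwhile the character $\frac12\b_0$ of $\sqrt{\Phi_{\l_0}(\l)/(\l-\l_0)}$ converges to $0_{\pi_1(\O)^*}$ as $\l_0\to-\infty$ — this is the reflection, on the level of characters, of the fact that $\cG(\cdot,\l_0)/\cG(\l_*,\l_0)\to\cM$ and the normalization $\cM(\l_*)=1$ kills the periods in the limit — so by the continuity of $V_\bullet(\l)$ just proved, $V_{\a-\frac12\b_0}(\l)\to V_\a(\l)$ and likewise at $\l_*$. Combining these, the limit is $V_\a(\l)/V_\a(\l_*)$, as claimed.

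The uniformity in $\a$ is then automatic: the continuity of $V_\bullet(\l)$ and $V_\bullet(\l_*)$ on the compact group $\pi_1(\O)^*$, together with the fact that $V_\a(\l_*)$ is bounded away from $0$ uniformly (it is the value of an extremal-type function; more concretely $O(\l_*,D)=1$ by \eqref{5oct3} and $|I(\l_*,D)|$ is bounded below by the Widom product), upgrades the pointwise convergence $V_{\a-\frac12\b_0}\to V_\a$ to uniform convergence in $\a$, since $\a\mapsto\a-\frac12\b_0$ is just a translation on the group and translations act continuously.

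The main obstacle I anticipate is the rigorous justification that $\frac12\b_0\to 0_{\pi_1(\O)^*}$ as $\l_0\to-\infty$ and that the outer/Blaschke products in \eqref{5oct1} converge locally uniformly with a modulus of continuity independent of $D$; the former requires carefully tracking how the period $\o(\l_0,\sE_j)$ of $\Phi_{\l_0}$ scales under the normalization defining the Martin function (cf. \eqref{4oct6}, where $\eta$ is precisely the limiting period density), and the latter is where the Widom summability \eqref{3oct2} does the real work. Once those two uniform-convergence facts are in hand, the rest is the bookkeeping of cancellations indicated above.
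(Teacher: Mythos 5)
Your argument is correct and follows essentially the same route as the paper's (much terser) proof: continuity of $O(\l,D)$ and $I(\l,D)$ in $D\in\cD(E)$ combined with the DCT-based homeomorphism property of the Abel map gives continuity of $\a\mapsto V_\a(\l)$, and the limit is then taken in the explicit kernel formula \eqref{5jan5}. The details you add — cancellation of the $\l_0$-dependent factors, the vanishing of the character $\tfrac12\b_0$ and of $\t_{\l_0}(\l)-\t_{\l_0}(\l_*)$ as $\l_0\to-\infty$ (which indeed rests on $\cG(\l_*,\l_0)\to 0$, used implicitly by the paper via regularity), the Widom-controlled uniform convergence of the products, and the uniform lower bound on $V_\a(\l_*)$ — are precisely what the paper leaves implicit, so no substantive deviation or gap.
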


\begin{proof}
We note that both functions $O(\l,D)$ and $I(\l,D)$ are continuous in $D\in\cD(E)$ as soon as $\l\in\bbC\setminus \bbR_+$. By continuity of the Abel map we have continuity of $V_\a(\l)$.
Then, we pass to the limit using the representation \eqref{5jan5}.
Thus the first statement \eqref{pat8oct2} of Theorem \ref{thm1} is proved.
\end{proof}

\begin{theorem}\label{thm1bis}
For a Widom domain $\O$ with DCT we define a positive continuous measure $\vk^\a=\vk_{\l_*}^\a$ by \eqref{pat8oct3}.
Then
\begin{equation}\label{pa8oct1}
k^{\a}(\l,\l_0)-e^{2ix(\t(\l)-\overline{\t(\l_0)})}k^{\a-\eta x}(\l,\l_0)=\int_0^x
\overline{f_{\a,\l_*}(\l_0,\xi)}
f_{\a,\l_*}(\l,\xi)
d\vk^\a(\xi),
\end{equation}
where
$$
f_{\a,\l_*}(\l,x)=e^{i x(\t(\l)-\t(\l_*))}\frac{V_{\a-x\eta}(\l)}
{V_{\a-x\eta}(\l_*)}.
$$
In particular, 
\begin{equation}\label{pa8oct2}
k^{\a}(\l,\l_0)=\int_0^\infty
\overline{e^{i x(\t(\l_0)-\t_*)}v_{\a-\eta x,\l_*}(\l_0)}
e^{i x(\t(\l)-\t_*)}v_{\a-\eta x,\l_*}(\l)
d\vk^\a(x).
\end{equation}

\end{theorem}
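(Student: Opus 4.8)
The plan is to read the left-hand side of \eqref{pa8oct1} as a reproducing kernel and to build it up by a telescoping argument along a continuous chain of subspaces. First I would note that, since $|e^{ix\t(\l)}|=e^{-x\cM(\l)}$ is $\le1$ in $\O$ and equals $1$ on $\sE$, multiplication by $b_x:=e^{ix\t}$ is an isometric embedding $E^2_\O(\a-\eta x)\hookrightarrow E^2_\O(\a)$ (characters add up by \eqref{4oct6}, the norm \eqref{4oct1} is preserved, and $b_x\in N_+(\O)$). Hence $b_xE^2_\O(\a-\eta x)$, $x\ge0$, is a nested decreasing chain of closed subspaces of $E^2_\O(\a)$; its orthogonal complements $\cH^\a_x:=E^2_\O(\a)\ominus b_xE^2_\O(\a-\eta x)$ form an increasing chain with $\cH^\a_0=\{0\}$; and, transporting reproducing kernels through $b_x$ (which contributes the factor $b_x(\l)\overline{b_x(\l_0)}=e^{ix(\t(\l)-\overline{\t(\l_0)})}$), the reproducing kernel of $\cH^\a_x$ is exactly $\k^\a_x(\l,\l_0):=k^\a(\l,\l_0)-e^{ix(\t(\l)-\overline{\t(\l_0)})}k^{\a-\eta x}(\l,\l_0)$, the left-hand side of \eqref{pa8oct1}. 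In particular $\k^\a_x(\l_*,\l_*)=\vk^\a_{\l_*}(x)$, because $\t(\l_*)-\overline{\t(\l_*)}=2i\Im\t_*$; being a diagonal reproducing kernel along an increasing chain it is nondecreasing, and it is continuous in $x$ (composition of the continuous maps $x\mapsto\a-\eta x$ and $\b\mapsto k^\b(\l_*,\l_*)$, the latter by DCT and Theorem \ref{thdct}(ii)), with finite total mass $\vk^\a_{\l_*}(\infty)=k^\a(\l_*,\l_*)$. The same continuity statement gives a uniform bound $k^\b(\l,\l)\le C_\l$ over the compact group $\pi_1(\O)^*$, whence $|b_x(\l)G(\l)|^2\le\|G\|^2C_\l e^{-2x\cM(\l)}\to0$ for $G\in E^2_\O(\a-\eta x)$, so $\bigcap_{x\ge0}b_xE^2_\O(\a-\eta x)=\{0\}$.

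The chain is self-similar: since $b_{x_2}=b_{x_1}b_{x_2-x_1}$ one has $\cH^\a_{x_2}\ominus\cH^\a_{x_1}=b_{x_1}E^2_\O(\a-\eta x_1)\ominus b_{x_2}E^2_\O(\a-\eta x_2)=b_{x_1}\cH^{\a-\eta x_1}_{x_2-x_1}$, and transporting reproducing kernels through $b_{x_1}$ gives the increment identity
\begin{equation*}
\k^\a_{x_2}(\l,\l_0)-\k^\a_{x_1}(\l,\l_0)=e^{ix_1(\t(\l)-\overline{\t(\l_0)})}\,\k^{\a-\eta x_1}_{x_2-x_1}(\l,\l_0).
\end{equation*}
Telescoping over a partition $0=x_0<\dots<x_N=x$ yields $\k^\a_x(\l,\l_0)=\sum_{k}e^{ix_k(\t(\l)-\overline{\t(\l_0)})}\k^{\a-\eta x_k}_{x_{k+1}-x_k}(\l,\l_0)$, and a short computation from \eqref{pat8oct3} gives the matching partition of the mass, $\vk^{\a-\eta x_k}_{\l_*}(x_{k+1}-x_k)=e^{2x_k\Im\t_*}\bigl(\vk^\a_{\l_*}(x_{k+1})-\vk^\a_{\l_*}(x_k)\bigr)$.

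The heart of the matter, and the step I expect to be the main obstacle, is the small-increment asymptotics of the model kernel:
\begin{equation*}
\lim_{h\to0^+}\frac{\k^\b_h(\l,\l_0)}{\vk^\b_{\l_*}(h)}=\frac{V_\b(\l)\,\overline{V_\b(\l_0)}}{|V_\b(\l_*)|^2},\qquad \l,\l_0\in\bbC\setminus\bbR_+,
\end{equation*}
uniformly for $\b$ in the relevant compact arc $\{\a-\eta t:\,t\in[0,x]\}$. To prove this I would first take $\l_0\in\bbR_-$, insert the explicit formula \eqref{5jan5} of Lemma \ref{l18oct} into $\k^\b_h(\l,\l_0)=k^\b_{\l_0}(\l)-e^{ih(\t(\l)-\overline{\t(\l_0)})}k^{\b-\eta h}_{\l_0}(\l)$, and separate the two sources of the difference — the scalar factor $e^{ih(\t(\l)-\overline{\t(\l_0)})}-1=O(h)$, and the increment $V_{\b-\eta h-\frac12\b_0}(\cdot)-V_{\b-\frac12\b_0}(\cdot)$ of the canonical product, which is only $o(1)$ — and then normalize by $\vk^\b_{\l_*}(h)=\k^\b_h(\l_*,\l_*)$, so that the two non-comparable scales cancel and precisely the factor $V_\b(\l)/V_\b(\l_*)$ in $\l$, and its conjugate in $\l_0$, survive; uniformity in $\b$ follows from compactness of $\pi_1(\O)^*$ together with uniform continuity of $(\b,\l)\mapsto V_\b(\l)$ (Lemma \ref{l1oct10}). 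The general case $\l_0\in\O$ then follows because both sides are anti-holomorphic in $\l_0$ and agree on $\bbR_-$. The delicate point is exactly that $h\mapsto V_{\b-\eta h}(\l)$ need not be differentiable — this is why the limiting object is the Stieltjes measure $d\vk^\a$ and not a density — so the leading term must be extracted from the interplay of the two increments rather than by naive differentiation; one should expect to rely on continuity of the canonical products and on the DCT-homeomorphism of the Abel map throughout.

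Granting this lemma I would conclude as follows. Substitute the small-increment asymptotics with $\b=\a-\eta x_k$, $h=x_{k+1}-x_k$ into the telescoped sum; using the mass-partition identity and the elementary algebraic identity $e^{ix(\t(\l)-\overline{\t(\l_0)})}e^{2x\Im\t_*}\dfrac{V_{\a-\eta x}(\l)\overline{V_{\a-\eta x}(\l_0)}}{|V_{\a-\eta x}(\l_*)|^2}=\overline{f_{\a,\l_*}(\l_0,x)}\,f_{\a,\l_*}(\l,x)$ (immediate from the definition of $f_{\a,\l_*}$ and $\overline{\t_*}-\t_*=-2i\Im\t_*$), the $k$-th summand becomes $\overline{f_{\a,\l_*}(\l_0,x_k)}f_{\a,\l_*}(\l,x_k)\bigl(\vk^\a_{\l_*}(x_{k+1})-\vk^\a_{\l_*}(x_k)\bigr)(1+o(1))$ with the $o(1)$ uniform. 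Since $\xi\mapsto\overline{f_{\a,\l_*}(\l_0,\xi)}f_{\a,\l_*}(\l,\xi)$ is continuous by Lemma \ref{l1oct10} and $d\vk^\a_{\l_*}$ is finite and continuous, refining the partition makes the Riemann--Stieltjes sums converge to $\int_0^x\overline{f_{\a,\l_*}(\l_0,\xi)}\,f_{\a,\l_*}(\l,\xi)\,d\vk^\a_{\l_*}(\xi)$, which is \eqref{pa8oct1}. Finally \eqref{pa8oct2} follows by letting $x\to\infty$: for $\l,\l_0\in\O$ the remainder obeys $|e^{ix(\t(\l)-\overline{\t(\l_0)})}k^{\a-\eta x}(\l,\l_0)|\le\sqrt{C_\l C_{\l_0}}\,e^{-x(\cM(\l)+\cM(\l_0))}\to0$, while the right-hand integral converges absolutely because the tail mass $\vk^\a_{\l_*}(\infty)-\vk^\a_{\l_*}(x)=e^{-2x\Im\t_*}k^{\a-\eta x}(\l_*,\l_*)$ decays like $e^{-2x\Im\t_*}$ and dominates the at-most-exponential growth $|f_{\a,\l_*}(\l,\xi)|=O(e^{\xi\Im\t_*})$; the identity extends to $\pd\O$ by passing to $E^2_\O(\a)$-boundary values.
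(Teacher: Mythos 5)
Your structural setup is sound and in fact mirrors the geometry behind the paper's proof: $b_x=e^{ix\t}$ does embed $E^2_\O(\a-\eta x)$ isometrically into $E^2_\O(\a)$, the left-hand side of \eqref{pa8oct1} is the reproducing kernel of $\cK(\a,x)=E^2_\O(\a)\ominus e^{ix\t}E^2_\O(\a-\eta x)$, your increment identity, the mass-partition computation from \eqref{pat8oct3}, and the algebraic identity producing $\overline{f_{\a,\l_*}(\l_0,x)}f_{\a,\l_*}(\l,x)$ are all correct, as is the passage $x\to\infty$ to \eqref{pa8oct2}. The problem is the step you yourself single out as the heart of the matter: the small-increment asymptotics
$\k^\b_h(\l,\l_0)/\vk^\b_{\l_*}(h)\to V_\b(\l)\overline{V_\b(\l_0)}/V_\b(\l_*)^2$ is asserted, but the argument offered for it is not a proof. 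Inserting \eqref{5jan5} splits the increment into an $O(h)$ scalar part and the increment $V_{\b-\eta h-\frac12\b_0}-V_{\b-\frac12\b_0}$ of the canonical product, which is only $o(1)$ with no modulus of continuity; continuity of $\b\mapsto V_\b(\l)$ (Lemma \ref{l1oct10}) gives no information whatsoever about the \emph{ratio} of the increment at $\l$ (or $\l_0$) to the increment at $\l_*$. The statement that ``the two non-comparable scales cancel and precisely the factor $V_\b(\l)/V_\b(\l_*)$ survives'' is exactly the rank-one factorization of the infinitesimal kernel increment, i.e. the differentiated form of \eqref{pa8oct1} itself; as written the argument assumes what it is supposed to prove. (There is also a secondary point you would have to handle: nothing you establish rules out $\vk^\b_{\l_*}(h)=0$ for small $h>0$, in which case your normalizing denominator vanishes and the telescoped summands with zero $\vk$-increment need a separate estimate showing the corresponding $\k^{\b}_h(\l,\l_0)$ is negligible.)

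The paper avoids any such increment analysis. It approximates the continuous chain by the \emph{discrete} one attached to a pole $\l_N\in\bbR_-$ with $\cG(\l_N,\l_*)=1/N$: for the orthonormal bases $e^\a_n(\cdot,\l_N)$ of \eqref{4oct3} the analogue of your telescoped sum is an exact Parseval-type identity \eqref{8oct1}, with the discrete masses $|e^\a_k(\l_*,\l_N)|^2$ playing the role of the $\vk$-increments. Passing to the limit $N\to\infty$ then uses only continuity-type facts: $\Phi_{\l_N}^{[Nx]}\to e^{ix\t}$ and $N\b_N\to\eta$ from the Martin-function limit \eqref{4oct5}--\eqref{84oct5}, DCT-continuity of $k^\b(\l_*,\l_*)$ in $\b$ (Theorem \ref{thdct}(ii)) to get $\vk^\a_N\to\vk^\a$, and the explicit formula \eqref{5jan5} to approximate the ratios $e^\a_k(\l,\l_N)/e^\a_k(\l_*,\l_N)$ uniformly by $e^{i\frac kN(\t(\l)-\t_*)}V_{\a-\frac kN\eta}(\l)/V_{\a-\frac kN\eta}(\l_*)$. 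No ratio of increments ever appears, which is precisely the device your proposal is missing; to repair your route you would either need to prove the factorized asymptotics independently (which I do not see how to do from continuity alone) or fall back on this discrete approximation, at which point you have reproduced the paper's proof.
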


\begin{proof} WLOG we set $x=1$.
First, we introduce a sequence of measures $\{\vk^\a_N\}_{N\ge 1}$ on $[0,1]$. By regularity for a fixed $N$ we define $\l_N<\l_*$ such that $\cG(\l_N,\l_*)=1/N$. 
We set
$$
\vk^\a_{N}\{ k/ N\}=|e_k^\a(\l_*,\l_N)|^2=|\Phi_{\l_N}(\l_*)|^k\frac{|k^{\a-k\b_N}(\l_*,\l_N)|^2}{k^{\a-k\b_N}(\l_*,\l_*)},\quad k=0,\dots, N-1.
$$
where $\b_N$ is the character generated by the function $\Phi_{\l_N}(\l)$
and $\vk_N^{\a}\{B\}$ is  the measure of a set $B$ (a single point in our case). By \eqref{4oct3} and \eqref{4oct4} we have
the standard for reproducing kernels relation
\begin{equation}\label{8oct1}
k^{\a}(\l,\l_0)=\sum_{n\ge 0}\overline{e^\a_n(\l_0,\l_N)}e^\a_n(\l,\l_N).
\end{equation}
Therefore, the corresponding distribution function can be simplified to the form
\begin{equation*}
\vk^\a_{N}(x)=\sum_{k=0}^{[Nx]}|e_k^\a(\l_*,\l_N)|^2=k^{\a}(\l_*,\l_*)-|\Phi_{\l_N}(\l_*)|^{2([Nx]+1)}k^{\a-([Nx]+1)\b_N}(\l_*,\l_*),
\end{equation*}
where $x\in (0,1)$ is irrational.

By \eqref{4oct5} and \eqref{84oct5} we have
$$
\Phi_{\l_N}(\l_*)^{[Nx]+1}=\left(e^{i\t_{\l_N}(\l_*)/\cG_{\l_N}(\l_*)}\right)^{\frac{[Nx]+1}{N}}\to e^{ix\t(\l_*)}, \quad\text{as}\ N\to\infty. 
$$
For the same reason, $e^{2\pi i N\beta_N(\g)}\to e^{2\pi i\eta(\g)}$ for all $\g\in\pi_1(\O)$. Using continuity of the reproducing kernel we obtain
\begin{equation}\label{8oct2}
\lim_{N\to\infty}\vk^\a_N(x)=k^{\a}(\l_*,\l_*)-e^{-2x\Im \t(\l_*)}k^{\a-\eta x}(\l_*,\l_*)=\vk^\a(x).
\end{equation}

Going back to the general expression \eqref{8oct1} for fixed $\l$ and $\l_0$ we write
\begin{align}\label{8oct4}
k^{\a}(\l,\l_0)-\Phi_{\l_N}(\l_*)^N\overline{\Phi_{\l_N}(\l_*)^N}k^{\a-N\b_N}(\l,\l_0)\\
=&\sum_{n=0}^{N-1}\overline{\frac{e^\a_n(\l_0,\l_N)}{e^\a_n(\l_*,\l_N)}}
|e^\a_n(\l_*,\l_N)|^2 \nonumber
\frac{e^\a_n(\l,\l_N)}{e^\a_n(\l_*,\l_N)}\\
=&\sum_{n=0}^{N-1}\overline{\frac{e^\a_n(\l_0,\l_N)}{e^\a_n(\l_*,\l_N)}}
\frac{e^\a_n(\l,\l_N)}{e^\a_n(\l_*,\l_N)}\vk^\a_{N}\{k/N\}.\nonumber
\end{align}
In the right hand side we can pass to the limit as it was discussed above
$$
k^{\a}(\l,\l_0)-\lim_{N\to\infty}\Phi_{\l_N}(\l)^N\overline{\Phi_{\l_N}(\l_0)^N}k^{\a-N\b_N}(\l,\l_0)
=k^{\a}(\l,\l_0)-e^{i(\t(\l)-\overline{\t(\l_0)})}k^{\a-\eta}(\l,\l_0).
$$
Due to Lemma \ref{l18oct} we have
$$
\frac{e^\a_k(\l,\l_N)}{e^\a_k(\l_*,\l_N)}=\sqrt{\frac{1-\l_*/\l_N}{1-\l/\l_N}}e^{i\frac {k+0.5} N(\t_{\l_N}(\l)-\t_{\l_N}(\l_*))/\cG_{\l_N}(\l_*)}\frac{V_{\a-\frac{k+0.5}{N}N\b_N}(\l)}
{V_{\a-\frac{k+0.5}{N}N\b_N}(\l_*)}.
$$
Therefore, for an arbitrary $\ve >0$,  for sufficiently big $N>N_0$
$$
\left| \frac{e^\a_k(\l,\l_N)}{e^\a_k(\l_*,\l_N)}-
e^{i\frac {k} N(\t(\l)-\t(\l_*))}\frac{V_{\a-\frac{k}{N}\eta}(\l)}
{V_{\a-\frac{k}{N}\eta}(\l_*)}
\right|\le \ve
$$
holds for all $k\le N$. Thus, the last expression in \eqref{8oct4} can be substituted with a fixed error by the integral
$$
\int_0^1
 e^{-i x(\overline{\t(\l_0)-\t(\l_*)})}\overline{\frac{V_{\a-x\eta}(\l_0)}
{V_{\a-x\eta}(\l_*)}}
e^{i x(\t(\l)-\t(\l_*))}\frac{V_{\a-x\eta}(\l)}
{V_{\a-x\eta}(\l_*)}d\vk_N^\a(x).
$$
Due to the measure convergence \eqref{8oct2} we obtain \eqref{pa8oct1}. Finally, we can pass to the limit as $x\to\infty$ in the left hand side of this relation,  we obtain \eqref{pa8oct2}.
\end{proof}

\begin{remark}
DCT, equivalently a continuity of the reproducing kernels with respect to the character, plays the key role in the proof of Theorem \ref{thm1}. Moreover, if it fails the chain of subspaces
$e^{ix\t}E^2_{\O}(\a-\eta x)$ is not necessary complete for a certain $\a\in\pi_1(\O)^*$,  as it was shown in \cite{Yl1}, what contradicts to the integral representation \eqref{pa8oct1}.
\end{remark}

\begin{proof}[Proof of Theorem \ref{thm1}]
On the dense set  in $E_{\O}^2(\a)$ we define a map to $\chi_{\bbR_+}L^2_{d\vk^\a_{\l_*}}$ by
\begin{equation}\label{9oct1}
e^{i x(\t(\l)-\t(\l_0))}k^{\a-\eta x}(\l,\l_0)\mapsto\chi_{(x,\infty)}(\xi)\overline{e^{i x(\t(\l_0)-\t(\l_*))}v_{\a-\eta \xi,\l_*}(\l_0)},
\end{equation}
where $\chi_{B}$ is the characteristic function of a set $B$.
By Theorem \ref{thm1bis} this is an isometry. Therefore this map is well defined on $E_{\O}^2(\a)$. Since in the image we have all  functions $\{\chi_{(x,\infty)}\}_{x\in\bbR_+}$, it is dense. Indeed, assume that $f(\xi)\in \chi_{\bbR_+}L^2_{d\vk^\a_{\l_*}}$ is orthogonal to this collection, then
$$
\int_{x}^\infty f(\xi)d\vk_{\l_*}^\a(\xi)=0.
$$
That is, $f(\xi)=0$ for a.e. $\xi$ with respect to the measure $\vk_{\l_*}^\a$. Thus $\cF^\a$ restricted to $\chi_{\bbR_+}L^2_{d\vk^\a_{\l_*}}$ is well defined as  the inverse to the map \eqref{9oct1}.

In fact, for the same reason, we have that $\cF^\a:\chi_{(x,\infty)}L^2_{d\vk^\a_{\l_*}}\to e^{ix\t }E_{\O}^2(\a-\eta x)$ acts unitary for all $x\in\bbR$. It remains to show that 
$$
\cup_{x\in\bbR}e^{ix\t }E_{\O}^2(\a-\eta x)=L^2_{\O}.
$$
Equivalently, by the property (iv) of Theorem \ref{thdct}, we have to show that
$$
\cap_{x\in\bbR}e^{-ix\t }E_{\O}^2(\fj-\a+\eta x)=\{0\}.
$$
If $F$ belongs to the intersection, then for every $x$ there exits $G_x\in E_{\O}^2(\fj-\a+\eta x)$ such that $F=e^{-ix\t }G_x$. 
In particular, $F\in  E_{\O}^2(\fj-\a)$.  So, it is enough to show that $F(\l)=0$ for all $\l\in\O$. 
Note that 
$\|G_x\|=\|F\|$. Since 
$$
|G_{x}(\l)|\le \| F\|\sqrt{k^{\fj-\a+\eta x}(\l,\l)}\quad\text{and}\quad C= \sup_{\b\in \pi_1(\O)^*}k^{\b}(\l,\l)<\infty,
$$
we have
$$
|F(\l)|\le C\|F\|e^{x\Im\t(\l)}\to 0, \quad\text{as}\quad x\to-\infty.
$$

\end{proof}

\begin{remark}
As a byproduct, we proved  that the  span of
$\{e^{i x(\t-\t_*)}k_{\l_*}^{\a-\eta x}\}$ is dense in $E_{\O}^2(\a)$ as $x\in\bbR_{+}$ and in $L^2_{\O}$ if the parameter $x$ runs in $\bbR$.
\end{remark}

\section{Reproducing kernels and Transfer matrices}

In plain domains the reproducing kernels of Hardy/Smirnov spaces have a very specific structure: a kind of resolvent expression related to the operator multiplication by the independent variable $\l$.

\begin{proposition}
The reproducing kernel of $E^2_{\O}(\a)$ is of the form
\begin{equation}\label{b12mar5}
k^\a(\l,\l_0)
=i\cC(\a)\frac{\sqrt{\l}V_{\a+\fj}(\l) \overline{V_{\a}(\l_0)}+V_{\a}(\l) \overline{\sqrt{\l_0}V_{\a+\fj}(\l_0)}}{\l-\bar \l_0},
\end{equation}
where $\cC(\a)=\cC_{\l_*}(\a)$ is given by
\begin{equation}\label{b12apr7}
\frac 1{\cC_{\l_*}(\a)}=
V_{-\a}(\l_*) {V_{\a}(\l_*)}+V_{\fj-\a}(\l_*) V_{\a+\fj}(\l_*)=\sqrt{\frac{I_\a(\l_*)}{I_{\a+\fj}(\l_*)}}+\sqrt{\frac{I_{\a+\fj}(\l_*)}{I_{\a}(\l_*)}}.
\end{equation}
\end{proposition}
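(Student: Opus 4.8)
The plan is to verify the formula by checking that the right-hand side of \eqref{b12mar5} lies in $E^2_\O(\a)$ as a function of $\l$ and that it reproduces point evaluation at $\l_0$ against the pairing \eqref{4oct2}, the normalization being fixed by the constant $\cC(\a)$. I would begin from the already-established formula \eqref{5jan5} of Lemma \ref{l18oct}, which gives $k^\a_{\l_0}$ for $\l_0\in\bbR_-$ in terms of canonical products and the complex Green function $\Phi_{\l_0}$. The idea is to let $\l_0\to-\infty$ is \emph{not} what is wanted here; rather, I would run the complementary degeneration: express $\Phi_{\l_0}(\l)$, $\Phi'_{\l_0}(\l_0)$ and $\b_0$ via the single factor $\sqrt{\Phi_{\l_0}(\l)/(\l-\l_0)}$ (whose character is $\tfrac12\b_0$ by definition) and recognize that, because $\Phi_{\l_0}$ is the complex Green function with a single pole, one has the elementary identity
\begin{equation*}
\sqrt{\frac{\Phi_{\l_0}(\l)\Phi'_{\l_0}(\l_0)}{\l-\l_0}}\cdot V_{\a-\frac12\b_0}(\l)=
\text{(a product that collapses to the $\dfrac{1}{\l-\l_0}$-type kernel)}.
\end{equation*}
Concretely, the factor $V_{\a-\frac12\b_0}(\l)\sqrt{\Phi_{\l_0}(\l)}$ is, up to an outer correction, $V_\a(\l)$; the key point is that multiplying $V_{\a-\frac12\b_0}$ by $\sqrt{\Phi_{\l_0}}$ shifts the divisor so as to absorb the extra Blaschke factor, giving back a canonical product with character $\a$ and, symmetrically, a product with character $\a+\fj$ once the $\sqrt\l$ from $\sqrt{\Phi_{\l_0}(\l)/(\l-\l_0)}$ is accounted for. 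After this bookkeeping, $k^\a_{\l_0}(\l)$ acquires the resolvent shape $\dfrac{\sqrt\l\,V_{\a+\fj}(\l)\,\overline{V_\a(\l_0)}+V_\a(\l)\,\overline{\sqrt{\l_0}V_{\a+\fj}(\l_0)}}{\l-\bar\l_0}$ times a $\l_0$-independent constant; analyticity across $\l=\bar\l_0$ of the displayed quotient is forced by \eqref{15oct4}-type reflectionless cancellation of the two terms, exactly as for the de Branges kernel.

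Next I would pin down $\cC(\a)$. Setting $\l=\l_0=\l_*$ in \eqref{b12mar5} and using $k^\a(\l_*,\l_*)>0$ together with the normalization $\star\cG(\l_*,\l_0)=0$ (so that the relevant $\Phi$-values are real and positive at $\l_*$), the indeterminacy $\tfrac0{\l_*-\l_*}$ is resolved by l'Hôpital in $\l$, which produces the combination $V_{-\a}(\l_*)V_\a(\l_*)+V_{\fj-\a}(\l_*)V_{\a+\fj}(\l_*)$ in the denominator. The second equality in \eqref{b12apr7} is then pure algebra: by \eqref{5oct2}, $V_\b(\l_*)^2=O_\b(\l_*)I_\b(\l_*)$, and by Lemma \ref{l15oct}, $V_{\fj-\b}(\l_*)=\overline{V_\b(\l_*)}$ which at the real point $\l_*$ equals $V_\b(\l_*)$ with $O$ unchanged and $I$ inverted; since $O_\a(\l_*)O_{\a+\fj}(\l_*)=O_{\fj}(\l_*)$ and $O_\fj(\l)=1/\sqrt\l$ up to the chosen normalization (the character $\fj$ being generated by $\sqrt\l$, whose outer part at $\l_*$ contributes the $1/\sqrt{\l_*}$ that was already extracted), the outer factors cancel in the ratio and one is left with $\sqrt{I_\a(\l_*)/I_{\a+\fj}(\l_*)}+\sqrt{I_{\a+\fj}(\l_*)/I_\a(\l_*)}$.

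Finally I would confirm the reproducing property directly: insert \eqref{b12mar5} into $\frac1{2\pi}\oint_\sE\overline{k^\a(\xi,\l_0)}F(\xi)\frac{d\xi}{\sqrt\xi}$, use $\overline{V_\b(\xi)}=V_{\fj-\b}(\xi)$ on $\sE$ from Lemma \ref{l15oct} to turn the conjugated kernel into a genuine character-automorphic function, and recognize the resulting contour integral as an application of DCT in the form \eqref{4oct2} to the function $\xi\mapsto\dfrac{\text{(stuff)}\,F(\xi)}{\xi-\l_0}$, whose only pole inside is at $\l_0$ with residue $F(\l_0)$ — this is the same mechanism already used in the proof of Lemma \ref{l18oct}. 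The main obstacle I anticipate is the divisor bookkeeping in the first paragraph: keeping precise track of how multiplication by $\sqrt{\Phi_{\l_0}(\l)/(\l-\l_0)}$ and by $\sqrt\l$ moves points of the divisor $D(\a)$ between the gaps and the "$+1$ versus $-1$" signs $\e_j$, and checking that the two numerator terms carry characters $\a$ and, after the $\sqrt\l$, $\a+\fj$ respectively, so that the quotient is single-valued up to the character $\a$. Once that is set up correctly, the normalization and the reproducing identity are routine.
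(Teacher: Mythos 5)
Your closing paragraph is essentially the paper's actual argument: take the right-hand side of \eqref{b12mar5} as a candidate $G$, check $G\in E^2_\O(\a)$ (Smirnov class plus the $L^2$ boundary condition), and compute $\langle F,G\rangle$ using $\overline{V_\b(\xi)}=V_{\fj-\b}(\xi)$ on $\sE$ (Lemma \ref{l15oct}) and DCT. But your proposal has two genuine gaps. First, the derivation in your opening paragraph --- that the formula \eqref{5jan5} of Lemma \ref{l18oct} ``collapses'' to the resolvent form after divisor bookkeeping --- is precisely the nontrivial content of the proposition and is only asserted, not carried out; moreover Lemma \ref{l18oct} is stated for $\l_0\in\bbR_-$ only, so even granting the collapse you would still need a separate step (say, antianalytic continuation in $\l_0$) to reach general $\l_0\in\O$. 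The paper never starts from \eqref{5jan5} and so avoids this entirely. Incidentally, the analyticity of the quotient at $\l=\bar\l_0$ comes from the symmetry $V_\b(\bar\l)=\overline{V_\b(\l)}$ (the $V$'s are real on $\bbR_-$) together with $\sqrt{\bar\l}=-\overline{\sqrt{\l}}$, not from the reflectionless relation \eqref{15oct4}, which concerns boundary values on $\sE$.

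Second, and more seriously, when you ``confirm the reproducing property'' by inserting \eqref{b12mar5} into the pairing and applying DCT, the computation does not return $F(\l_0)$ but $\cC(\a)\bigl(V_{-\a}(\l_0)V_{\a}(\l_0)+V_{\fj-\a}(\l_0)V_{\a+\fj}(\l_0)\bigr)F(\l_0)$. To close the argument you must know that $V_{-\a}(\l)V_{\a}(\l)+V_{\fj-\a}(\l)V_{\a+\fj}(\l)$ is \emph{constant} in $\l$ (this is exactly the Wronskian identity \eqref{10oct6}); only then does the single constant $\cC_{\l_*}(\a)$, normalized at $\l_*$, serve every $\l_0$. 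Your proposal contains no argument for this constancy: the l'H\^opital step at $\l=\l_0=\l_*$ produces derivatives of the $V$'s, not the combination $V_{-\a}V_{\a}+V_{\fj-\a}V_{\a+\fj}$, and in any case says nothing about other values of $\l_0$. The paper obtains constancy cheaply: taking $F=G$ gives $\|G\|^2=\bigl(V_{-\a}(\l_0)V_{\a}(\l_0)+V_{\fj-\a}(\l_0)V_{\a+\fj}(\l_0)\bigr)G(\l_0)>0$, and since $G(\l_0)$ is real (the numerator of \eqref{b12oct10} at $\l=\l_0$ is $2\Re(\cdot)$ while the denominator is $2i\Im\l_0$), the analytic function $V_{-\a}V_{\a}+V_{\fj-\a}V_{\a+\fj}$ is real-valued on $\O$, hence a positive constant; evaluating it at $\l_*$, where $O_\b(\l_*)=1$ by the normalization \eqref{5oct3}, yields \eqref{b12apr7}. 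Without this step your scheme stalls exactly at the point where the constant must be shown to be independent of $\l_0$.
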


\begin{proof}
Note that $V_\a(\l)/(\l-\l_0)\in L^2_\O$, $\l_0\in\O$, and even after multiplication by $\sqrt{\l}$ we still have a function from $L^2_{\O}$. Since
$$
G(\l)=i\frac{\sqrt{\l}V_{\a+\fj}(\l) {V_{\a}(\bar \l_0)}-V_{\a}(\l) {\sqrt{\bar \l_0}V_{\a+\fj}(\bar\l_0)}}{\l-\bar \l_0}
$$
is of Smirnov class $N_+(\O)$, we get that $G(\l)\in E^2_{\O}(\a)$.
Note that by the construction $V_{\a}$ is real on $\bbR_-$ and $\sqrt{\l}$ takes pure imaginary values  here, so we can rewrite $G(\l)$ into the form
\begin{equation}\label{b12oct10}
G(\l)=i\frac{\sqrt{\l}V_{\a+\fj}(\l) \overline{V_{\a}(\l_0)}+V_{\a}(\l) \overline{\sqrt{\l_0}V_{\a+\fj}(\l_0)}}{\l-\bar \l_0}.
\end{equation}
We use Lemma \ref{l15oct} and DCT, then for an arbitrary $F(\l)\in E_{\O}^2(\a)$ we have
\begin{align}\label{10oct1}
\langle F,G \rangle=&\frac 1{2\pi i } \oint_{\sE}
\frac{\sqrt{\xi}V_{-\a}(\xi) {V_{\a}(\l_0)}+V_{\fj-\a}(\xi) {\sqrt{\l_0}V_{\a+\fj}(\l_0)}}{\xi- \l_0} F(\xi)\frac{d\xi}{\sqrt{\xi}}
\nonumber
\\
=&(V_{-\a}(\l_0) {V_{\a}(\l_0)}+V_{\fj-\a}(\l_0) V_{\a+\fj}(\l_0))F(\l_0).
\end{align}
In particular,
$$
\|G\|^2=(V_{-\a}(\l_0) {V_{\a}(\l_0)}+V_{\fj-\a}(\l_0) V_{\a+\fj}(\l_0))G(\l_0).
$$
Since $G$ is not identically zero $\|G\|^2 >0$. Since $G(\l_0)$ is a real number, we obtain that the analytic function 
$V_{-\a}(\l_0) {V_{\a}(\l_0)}+V_{\fj-\a}(\l_0) V_{\a+\fj}(\l_0)$ assumes only real values for all $\l_0\in\O$. Therefore it is constant, which we denote by
$1/\cC_{\l_*}(\a)>0$.  We have
\eqref{b12apr7}, see \eqref{5oct2} and \eqref{5oct3}. Consequently, \eqref{b12oct10} and \eqref{10oct1} implies \eqref{b12mar5}.

\end{proof}

\begin{definition}
In what follows, the relation 
\begin{equation}\label{10oct6}
\cC(\a)\det
\begin{bmatrix} V_{\a+\fj}(\l)&V_{\a}(\l)\\
-V_{-\a}(\l)& V_{\fj-\a}(\l)
\end{bmatrix}=1
\end{equation}
we call the Wronskian identity.
\end{definition}

\begin{corollary}
The generalized eigenfunction $v_{\a,\l_*}(\l)$ possesses the following representation in terms of reproducing kernels
\begin{equation}\label{18apr7}
\frac{i v_{\a,\l_*}(\l)}{I_\a(\l_*)+I_{\a+\fj}(\l_*)}=
i\cC(\a) V_{\a+\fj}(\l_*)V_{\a}(\l)=\sqrt{\l}k^{\a+\fj}(\l,\l_*)+\sqrt{\l_*}k^{\a}(\l,\l_*).
\end{equation}
\end{corollary}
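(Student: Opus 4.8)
The plan is to read off both equalities from the closed form \eqref{b12mar5} of the reproducing kernel together with Lemma \ref{l1oct10}; the rightmost identity is the only genuine computation, and the leftmost is then a matter of tracking the normalization constants at the point $\l_*$.

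First I would prove $i\cC(\a)V_{\a+\fj}(\l_*)V_\a(\l)=\sqrt{\l}\,k^{\a+\fj}(\l,\l_*)+\sqrt{\l_*}\,k^\a(\l,\l_*)$. Put $\l_0=\l_*$ in \eqref{b12mar5}. Since $\l_*\in\bbR_-$, by construction $V_\a(\l_*)$ and $V_{\a+\fj}(\l_*)$ are real while $\sqrt{\l_*}$ is purely imaginary, so $\overline{V_\a(\l_*)}=V_\a(\l_*)$, $\overline{V_{\a+\fj}(\l_*)}=V_{\a+\fj}(\l_*)$ and $\overline{\sqrt{\l_*}}=-\sqrt{\l_*}$; hence
$$k^\a(\l,\l_*)=i\cC(\a)\,\frac{\sqrt{\l}\,V_{\a+\fj}(\l)V_\a(\l_*)-\sqrt{\l_*}\,V_\a(\l)V_{\a+\fj}(\l_*)}{\l-\l_*}.$$
Replacing $\a$ by $\a+\fj$, and using that $V_{\a+2\fj}=V_\a$ (because $2\fj=0_{\pi_1(\O)^*}$ and the Abel map is injective) together with $\cC(\a+\fj)=\cC(\a)$ — which is immediate from \eqref{b12apr7}, whose two summands merely interchange under $\a\mapsto\a+\fj$ — gives the companion formula for $k^{\a+\fj}(\l,\l_*)$. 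Forming $\sqrt{\l}\,k^{\a+\fj}(\l,\l_*)+\sqrt{\l_*}\,k^\a(\l,\l_*)$, the two terms proportional to $\sqrt{\l\l_*}\,V_{\a+\fj}(\l)V_\a(\l_*)$ cancel, the numerator collapses to $(\l-\l_*)V_\a(\l)V_{\a+\fj}(\l_*)$, and dividing by $\l-\l_*$ yields exactly $i\cC(\a)V_{\a+\fj}(\l_*)V_\a(\l)$; in particular the apparent singularity at $\l=\l_*$ is removable.

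For the leftmost equality I would substitute $v_{\a,\l_*}(\l)=V_\a(\l)/V_\a(\l_*)$ from Lemma \ref{l1oct10}, so that the claim reduces to the scalar identity $\cC(\a)V_{\a+\fj}(\l_*)=\bigl(V_\a(\l_*)(I_\a(\l_*)+I_{\a+\fj}(\l_*))\bigr)^{-1}$. The ingredients are the values of the canonical products at the normalization point: evaluating \eqref{5oct1}--\eqref{5oct3} at $\l=\l_*$ gives $O_\a(\l_*)=1$, hence $V_\a(\l_*)^2=I_\a(\l_*)$ and $V_{\a+\fj}(\l_*)^2=I_{\a+\fj}(\l_*)$; and Lemma \ref{l15oct}, in the form $I_{\fj-\a}=I_\a^{-1}$ applied both to $D$ and to $D(\a+\fj)$, gives the duality relations $V_{\fj-\a}(\l_*)V_\a(\l_*)=1$ and $V_{-\a}(\l_*)V_{\a+\fj}(\l_*)=1$. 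Plugging these into \eqref{b12apr7} expresses $1/\cC(\a)$ through $V_\a(\l_*)$ and $V_{\a+\fj}(\l_*)$ alone, after which the identity follows by elementary algebra.

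The main obstacle is exactly this last matching of the constant: the cancellation in the middle identity is automatic, but to get the normalization in the leftmost expression right one must keep careful track of which divisor corresponds to each of $\a$, $\a+\fj$, $-\a$, $\fj-\a$ under the homeomorphic Abel map, and of the exact value $O_\a(\l_*)=1$; a misplaced sign or reciprocal there changes the answer. Everything else is substitution into \eqref{b12mar5}, \eqref{b12apr7} and Lemma \ref{l1oct10}.
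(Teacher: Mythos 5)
Your proof of the right-hand equality is correct and is in substance the same as the paper's: the paper packages the two kernels $k^{\a}(\cdot,\l_*)$, $k^{\a+\fj}(\cdot,\l_*)$ into a $2\times 2$ relation obtained from \eqref{b12mar5} at $\l_0=\l_*$ and inverts the matrix, which is exactly your cancellation computation; your observations that $V_{\a}(\l_*),V_{\a+\fj}(\l_*)$ are real, $\overline{\sqrt{\l_*}}=-\sqrt{\l_*}$, $V_{\a+2\fj}=V_{\a}$ and $\cC(\a+\fj)=\cC(\a)$ are all fine, as is the use of Lemma \ref{l1oct10}.

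The gap is in your last step, where you assert that the left-hand equality ``follows by elementary algebra'' from $O_\a(\l_*)=1$, $V_\a(\l_*)^2=I_\a(\l_*)$ and the dualities $V_{\fj-\a}(\l_*)V_\a(\l_*)=1$, $V_{-\a}(\l_*)V_{\a+\fj}(\l_*)=1$. Those ingredients are correct, but the algebra does not close. Writing $p=V_\a(\l_*)=\sqrt{I_\a(\l_*)}$ and $q=V_{\a+\fj}(\l_*)=\sqrt{I_{\a+\fj}(\l_*)}$, formula \eqref{b12apr7} gives $\cC(\a)=pq/(p^2+q^2)$, hence
$$
i\,\cC(\a)V_{\a+\fj}(\l_*)V_\a(\l)=\frac{i\,p\,q^2}{p^2+q^2}\,V_\a(\l)
=\frac{i\,I_\a(\l_*)I_{\a+\fj}(\l_*)}{I_\a(\l_*)+I_{\a+\fj}(\l_*)}\,v_{\a,\l_*}(\l)
=\frac{i\,v_{\a,\l_*}(\l)}{I_\a(\l_*)^{-1}+I_{\a+\fj}(\l_*)^{-1}},
$$
which agrees with the printed left-hand side of \eqref{18apr7} only if $I_\a(\l_*)I_{\a+\fj}(\l_*)=1$. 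That is not a general fact: Lemma \ref{l15oct} gives $I_{\fj-\a}=I_\a^{-1}$, not $I_{\a+\fj}=I_\a^{-1}$, and since the Abel map is injective the two characters $\a+\fj$ and $\fj-\a$ coincide only when $2\a=0$. So your reduction in fact produces the constant $\fc(\a)=V_\a(\l_*)V_{\a+\fj}(\l_*)\cC(\a)=\bigl(I_\a(\l_*)^{-1}+I_{\a+\fj}(\l_*)^{-1}\bigr)^{-1}$ -- precisely the quantity \eqref{13oct0} that is used later in the paper -- rather than $\bigl(I_\a(\l_*)+I_{\a+\fj}(\l_*)\bigr)^{-1}$. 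Claiming the printed identity follows ``by elementary algebra'' glosses over exactly the reciprocal you yourself warned about: you should either prove $I_\a(\l_*)I_{\a+\fj}(\l_*)=1$ (impossible in general) or state explicitly that your computation yields the identity with the corrected normalizing constant.
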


\begin{proof}
Note that by \eqref{b12apr7} $\cC_{\l_*}(\a)=\cC_{\l_*}(\a+\fj)$. By \eqref{b12mar5} we have
$$
\begin{bmatrix} k^{\a}(\l,\l_*)\\k^{\a+\fj}(\l,\l_*)
\end{bmatrix}=i\cC_{\l_*}(\a)
\begin{bmatrix}\sqrt{\l}&\sqrt{\l_*}\\ \sqrt{\l_*}&\sqrt{\l}
\end{bmatrix}^{-1}
\begin{bmatrix}
V_{\a+\fj}(\l)V_{\a}(\l_*)\\
V_{\a}(\l)V_{\a+\fj}(\l_*)
\end{bmatrix}.
$$
Due to Lemma \ref{l1oct10}, we have \eqref{18apr7}.

\end{proof}

Further,
we will use a certain very general construction related to  the theory of extensions of isometries \cite{AAK2, AG},
either the functional models of contractive operators \cite{NF}, 
or the Lax-Phillips scattering theory, or the generalized interpolation in the sense of Potapov's approach \cite{KY} and so on...

Consider the Cayley transformation of the multiplication by $\l$ in $L^2_\O$,
$$
\u(\l)=\frac{\l-\l_0}{\l-\overline{\l_0}}=\frac{\Phi_{\l_0}(\l)}{\Phi_{\overline{\l_0}}(\l)}, \quad \Im\l_0>0.
$$
Note that both complex Green functions have the same character $\b_0$. Evidently
$$
\overline{\u}: \Phi_{\l_0} E^2_{\O}(\a-\b_0)\to  \Phi_{\overline{\l_0}} E^2_{\O}(\a-\b_0)
$$
acts unitary. 
From this relation, passing to orthogonal complements we obtain that the multiplication by
$\overline{\u(\l)}$ acts unitary from
\begin{equation}\label{10oct4}
\left\{\frac{e_{\overline{\l_0}}^{\a+\b_0}}{\Phi_{\overline{\l_0}}}\right\}\oplus \left(E^2_\O(\a)\ominus e^{ix\t}E_\O^2(\a-\eta x)\right)\oplus e^{ix\t}e_{\l_0}^{\a-\eta x}
\end{equation}
to 
\begin{equation}\label{10oct5}
\left\{\frac{e_{{\l_0}}^{\a+\b_0}}{\Phi_{{\l_0}}}\right\}\oplus \left(E^2_\O(\a)\ominus e^{ix\t}E_\O^2(\a-\eta x)\right)\oplus e^{ix\t}e_{\overline{\l_0}}^{\a-\eta x},
\end{equation}
where
\begin{equation*}
e^\a_{\l_0}=\frac{k_{\l_0}^{\a}}{\|k^\a_{\l_0}\|}.
\end{equation*}

Now we recall the notion of the \textit{unitary node} \cite{AG, KY}. This is a unitary operator $\cU$ acting from the space
$\cK\oplus \cE_1$ to $\cK\oplus \cE_2$. $\cK$ is called the internal space and $\cE_{1,2}$ are called the scaling spaces.
Evidently, in the decompositions \eqref{10oct4} and \eqref{10oct5} we have the unitary node with two dimensional scaling spaces and the internal space $\cK(\a,x)=E^2_\O(\a)\ominus e^{ix\t}E_\O^2(\a-\eta x)$.

The \textit{scattering matrix} of the unitary node is a contractive operator valued  analytic function $S(\z)$, $\z\in\bbD$, acting from $\cE_1$ to $\cE_2$ 
(for a fixed $\z$) and given by
$$
S(\z)=P_{\cE_2}(I-\z P_{\cK}\cU)^{-1}\cU|_{\cE_1},
$$
where $P_{\cK}$, $P_{\cE_2}$ are the orthogonal projections on the corresponding spaces. Note that as soon as we fix basises in the scaling spaces we get a  scattering matrix valued analytic function.

Applying this construction in our case $\cU=\overline{\u(\l)}$, we obtain the scattering matrix
\begin{equation}\label{9oct}
\begin{bmatrix}\frac{e_{{\l_0}}^{\a+\b_0}}{\Phi_{{\l_0}}}
 &e^{ix\t}e_{\overline{\l_0}}^{\a-\eta x}
\end{bmatrix}
S(\z)\begin{bmatrix} c_1\\ c_2
\end{bmatrix}=P_{\cE_2}(I-\z P_{\cK(\a,x)}\cU)^{-1}\cU
\begin{bmatrix}\frac{e_{\overline{\l_0}}^{\a+\b_0}}{\Phi_{\overline{\l_0}}}
 &e^{ix\t} e_{{\l_0}}^{\a-\eta x}
\end{bmatrix}\begin{bmatrix} c_1\\ c_2
\end{bmatrix}.
\end{equation}

Switching of  ``channels" to a more  natural pairs
$$
\begin{bmatrix}\frac{e_{{\l_0}}^{\a+\b_0}}{\Phi_{{\l_0}}}
 &\frac{e_{\overline{\l_0}}^{\a+\b_0}}{\Phi_{\overline{\l_0}}}
\end{bmatrix}\quad \text{and}\quad
e^{ix\t}\begin{bmatrix}e_{{\l_0}}^{\a-\eta x}
 &e_{\overline{\l_0}}^{\a-\eta x}
\end{bmatrix}
$$
leads to the so-called Potapov-Ginzburg transform of $S(\z)$:
\begin{equation}\label{910oct}
A(\z)=\begin{bmatrix} S_{11}(\z)&S_{12}(\z) \\
0 & 1
\end{bmatrix}
\begin{bmatrix} 1&0\\S_{21}(\z)&S_{22}(\z) 
\end{bmatrix}^{-1}.
\end{equation}
$A(\z)$ is called the transfer matrix and possesses a much easier chain property with respect to $x$.
The contractive property of the scattering matrix, $I-S(\z)S(\z)^*\ge 0$, perturbs to the $j$-contractive property of the transfer matrix, 
\begin{equation}\label{911oct}
j-A(\z)jA(\z)^*\ge 0,\quad  j=\begin{bmatrix}1&0\\ 0& -1
\end{bmatrix},
\end{equation}
see e.g. the beginning of  Section 6 in \cite{KY}.


\begin{theorem}
Let
\begin{equation}\label{27apr5}
\cV_\a(\l)=\sqrt{\cC_{\l_*}(\a)}\begin{bmatrix}i\sqrt\l V_{\a+\fj}(\l)&
V_{\a}(\l) \\
-i{\sqrt\l}V_{-\a}(\l)&
 V_{\fj-\a}(\l) 
\end{bmatrix},\quad \cJ=\begin{bmatrix}0&1\\ -1&0\end{bmatrix}.
\end{equation}
Then
 the family of transfer matrices $\cA_{\a}(\l,x)$ is given by
\begin{equation}\label{27apr4}
\begin{bmatrix} e^{ix\t(\l)}&0\\ 0&e^{-ix\t(\l)}
\end{bmatrix}
\cV_{\a-\eta x}(\l)=\cV_{\a}(\l)\cA_\a(\l,x).
\end{equation}
They form a monotonic family of $\cJ$-contractive entire matrix functions.
\end{theorem}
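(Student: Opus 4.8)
The plan is to read \eqref{27apr4} as the \emph{definition} of $\cA_\a(\l,x):=\cV_\a(\l)^{-1}\operatorname{diag}\bigl(e^{ix\t(\l)},e^{-ix\t(\l)}\bigr)\cV_{\a-\eta x}(\l)$, to identify this explicit matrix with the transfer matrix of the unitary node of \eqref{10oct4}--\eqref{10oct5}, and to deduce entirety, $\cJ$-contractivity and monotonicity partly from that identification and partly from the abstract node theory. The preliminary observation is that, by the Wronskian identity \eqref{10oct6} and the form \eqref{27apr5}, $\det\cV_\a(\l)=i\sqrt{\l}$ for every $\a$; hence $\cV_\a(\l)^{-1}$ is defined on $\O$ and, taking determinants in \eqref{27apr4}, $\det\cA_\a(\l,x)=1$.

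The core step is the identification with the node transfer matrix. I would compute the scattering matrix $S(\z)$ of the node \eqref{10oct4}$\to$\eqref{10oct5} at the point $\z=\u(\l)$. Applying $(I-\u(\l)P_{\cK(\a,x)}\cU)^{-1}\cU$ to the incoming channel vectors $e_{\bar\l_0}^{\a+\b_0}/\Phi_{\bar\l_0}$ and $e^{ix\t}e_{\l_0}^{\a-\eta x}$ and projecting onto the outgoing channels, one re-expresses everything --- using Lemma \ref{l18oct} together with the reproducing-kernel formula \eqref{b12mar5} and the identity \eqref{18apr7} --- through the values of the columns of $\cV_\a$ at $\l_0$ and of $\cV_{\a-\eta x}$ at $\bar\l_0$; the normalizing constant $\sqrt{\cC_{\l_*}(\a)}$ in \eqref{27apr5} is exactly the one forced by the reproducing-kernel norms $\|k_{\l_0}^\a\|$ hidden in $e_{\l_0}^\a$. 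Passing to the ``natural pairs'' of channels and forming the Potapov--Ginzburg transform \eqref{910oct} then yields precisely \eqref{27apr4}; as a byproduct the transfer matrix turns out to be independent of the auxiliary point $\l_0$. I expect this to be the main obstacle: the bookkeeping that makes the abstract scattering/transfer construction reproduce \eqref{27apr4} \emph{with no stray constant factor} --- lining up the channel bases, the constants $\sqrt{\cC_{\l_*}(\a)}$, and the reproducing-kernel norms --- is where the work is.

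Entirety then follows. From \eqref{27apr4} each entry $\fa^\a_{ij}(\l,x)$ is a single-valued function of bounded characteristic in $\O$: single-valuedness is a character count ($e^{ix\t}$ carries $\eta x$ by \eqref{4oct6}, $\sqrt{\l}$ carries $\fj$, and $2\fj=0_{\pi_1(\O)^*}$). Its boundary values from $\bbC_+$ and $\bbC_-$ coincide on $\sE$, because $V_\b(\xi\mp i0)=\overline{V_\b(\xi\pm i0)}$ by Lemma \ref{l15oct} and $|e^{ix\t(\xi)}|=1$ there (the Martin function vanishes on $\sE$), so $\fa^\a_{ij}(\cdot,x)$ extends analytically across $\sE$; the reality $\overline{\cA_\a(\bar\l,x)}=\cA_\a(\l,x)$ on $\bbR_-$ together with the removability in the gaps $(a_j,b_j)$ of the apparent singularities coming from $\det\cV_\a$ and the divisor data --- the very cancellation that keeps the reproducing kernel \eqref{b12mar5} holomorphic in $\O$ --- gives that $\cA_\a(\cdot,x)$ is entire.

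Finally, $\cJ$-contractivity and monotonicity are inherited from the node. The $j$-contractivity \eqref{911oct} of the Potapov--Ginzburg transform in $\bbD$, transported through the Cayley substitution $\z=\u(\l)$ and through the constant channel switch that replaces $j$ by $\cJ$, becomes $\bigl(\cJ-\cA_\a(\l,x)\cJ\cA_\a(\l,x)^*\bigr)/(\l-\bar\l)\ge0$ in $\bbC_+$. For monotonicity, note that $e^{is\t}$ has modulus $\le1$ in $\O$ and modulus $1$ on $\sE$, so multiplication by it maps $E^2_\O(\b-\eta s)$ isometrically into $E^2_\O(\b)$; hence $e^{ix_2\t}E^2_\O(\a-\eta x_2)\subseteq e^{ix_1\t}E^2_\O(\a-\eta x_1)$ for $x_1\le x_2$, the internal spaces $\cK(\a,x)$ grow with $x$, the corresponding nodes compose, and $\cA_\a(\l,x_2)=\cA_\a(\l,x_1)\,\cA_{\a-\eta x_1}(\l,x_2-x_1)$ --- the asserted monotonicity of the family.
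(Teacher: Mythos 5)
Your identification of $\cA_\a(\l,x)$ with the Potapov--Ginzburg transform of the node scattering matrix, the determinant computation $\det\cV_\a(\l)=i\sqrt{\l}$ via the Wronskian identity, the $\cJ$-contractivity transported from \eqref{911oct}, and the chain/monotonicity property $\cA_\a(\l,x_1+x_2)=\cA_\a(\l,x_1)\cA_{\a-\eta x_1}(\l,x_2)$ all match the paper's route. The genuine gap is in the entirety step, which is precisely the part of the theorem where DCT is indispensable. You argue: the boundary values of the entries from $\bbC_+$ and $\bbC_-$ coincide a.e.\ on $\sE$ (by $\overline{V_\b(\xi\pm i0)}=V_{\fj-\b}(\xi\pm i0)$ and $|e^{ix\t}|=1$ there), ``so $\fa^\a_{ij}(\cdot,x)$ extends analytically across $\sE$.'' This inference does not hold. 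The set $\sE$ is in general a Cantor-type set of positive measure with gaps accumulating at every point, and a.e.\ coincidence of nontangential boundary values of a function of bounded characteristic across such a set does not give analytic continuation; even across an interval one would need a Morera/Privalov-type argument with integrability control, which you do not supply. More decisively, the symmetry $\overline{\cA_\a(\bar\l,x)}=\cA_\a(\l,x)$ holds in any Widom domain, yet the paper's remark (citing \cite{Yl1}) states that when DCT fails, transfer matrices of exactly the form \eqref{27apr4} can have singularities on $\sE$ --- so no argument relying only on this reflection symmetry can prove entirety.

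What the paper actually does, and what is missing from your proposal, is: (a) Lemma \ref{ledct}, asserting that DCT is inherited by the truncated domains $\O_n=\O\cap\bbD_{\frac{a_n+b_n}2}$ (proved via criterion (iii) of Theorem \ref{thdct} by extending characters of $\pi_1(\O_n)$ trivially to $\pi_1(\O)$ and comparing extremal functions); (b) the observation, from the explicit product form of $\cA_\a(\l,x)$, that the entries of $\sqrt{\l}\,\cA_\a(\l,x)$ belong to $E^1_{\O_n}(\hat\fj)$ with $\hat\fj=\fj|\pi_1(\O_n)$; and (c) the application of DCT in $\O_n$ to the kernel $\frac{\sqrt{\xi}}{\xi-\l}\cA_\a(\xi,x)\frac{1}{\sqrt{\xi}}$, which --- using the coincidence of the two boundary values on $\sE$ so that the slit contributions cancel --- yields
\begin{equation*}
\cA_{\a}(\l, x)=\frac 1{2\pi i}\oint_{2|\xi |=a_n+b_n}\cA_{\a}(\xi, x)\frac{d\xi}{\xi-\l},
\end{equation*}
hence holomorphy of $\cA_\a(\cdot,x)$ in every disk $\bbD_{\frac{a_n+b_n}2}$ and therefore entirety. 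The boundary-value symmetry you invoke is thus only one ingredient of this Cauchy-integral argument, not a substitute for it; without steps (a)--(c) your proof of the word ``entire'' in the statement is incomplete.
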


\begin{proof}
Formulas 
\eqref{27apr4} and \eqref{27apr5} follow from 
\eqref{9oct} and
\eqref{910oct} as soon as we take into account the representation \eqref{b12mar5} for reproducing kernels.
We performed these computations several times, see for details e.g. \cite[Appendix]{VYKL}. 
Note that the modification of the basis functions in the scaling spaces leeds to the another form of $\cJ$-matrix.
Respectively, \eqref{911oct} has the form
$$
\frac{\cJ-\cA_\a(\l,x)\cJ\cA_\a(\l,x)^*}{\l-\bar\l}\ge 0.
$$
Particularly, for the upper corner entry  we have here
$$
\left\{\cV_{\a}(\l)\frac{\cJ-\cA_\a(\l,x)\cJ\cA_\a(\l_0,x)^*}{\l-\bar\l_0}\cV_{\a}(\l_0)^*\right\}_{11}
=k^{\a}(\l,\l_0)-e^{i x(\t(\l)-\overline{\t(\l_0)})}k^{\a-\eta x}(\l,\l_0).
$$
The chain property 
\begin{equation}\label{16oct7}
\cA_\a(\l,x_1+x_2)=\cA_\a(\l,x_1)\cA_{\a-\eta x_1}(\l,x_2)
\end{equation}
follows immediately from the representation \eqref{27apr4}.

The fact that $\cA_{\a}(\l,x)$ is an entire matrix function  requires again the DCT property. We use the following lemma, which we prove later on.
\begin{lemma}\label{ledct}
 If $\O=\bbC\setminus E$ is of Widom type and DCT holds, then  $\O_n:=\O\cap\bbD_{\frac{a_n+b_n}2}$ is of Widom type and DCT holds in it.
\end{lemma}

Due to the Wronskian identity $\cA_\a(\l,x)$ is holomorphic in $\O$. We have to consider the boundary points $\l=\xi\pm i0$, $\xi\in \sE$.
Note that for such $\l$
$$
\overline{\cV_{\a}(\l)}=\sqrt{\cC_{\l_*}(\a)}\begin{bmatrix}-i\sqrt\l V_{-\a}(\l)&
V_{\fj-\a}(\l) \\
i{\sqrt\l}V_{\a+\fj}(\l)&
 V_{\a}(\l) 
\end{bmatrix}=\begin{bmatrix}0&1\\1&0
\end{bmatrix}\cV_{\a}(\l)
$$
and $\overline{e^{ix\t(\l)}}=e^{-ix\t(\l)}$. Therefore,  $\overline{\cA_{\a}(x,\l)}=\cA_{\a}(x,\l)$ and boundary values at $\l=\xi\pm i0$ coincides. 
Let us write explicitly the entries of the transfer matrix,
$$
\frac{\cA_{\a}(\l,x)}{\sqrt{\cC(\a)\cC(\a-\eta x)}}=\begin{bmatrix}\frac{V_{\fj-\a}(\l)}{i\sqrt{\l}}  &
-\frac {V_{\a}(\l)}{i\sqrt{\l}} \\
V_{-\a}(\l)&
 V_{\a+\fj}(\l)
\end{bmatrix}
\begin{bmatrix}i\sqrt\l  e^{ix\t(\l)}V_{\a+\fj-\eta x}(\l)&
 e^{ix\t(\l)}V_{\a-\eta x}(\l) \\
-i{\sqrt\l} e^{ix\t(\l)}V_{\eta x-\a}(\l)&
  e^{ix\t(\l)}V_{\fj+\eta x-\a}(\l) 
\end{bmatrix}.
$$
It is easy to see that the entries of $\sqrt{\l}\cA_{\a}(\l,x)$ belongs to $E^1_{\O_n}(\hat\fj)$,
 $\hat \fj=\fj |\pi_1( \O_n)$. Applying DCT in this domain we have
$$
\cA_{\a}(\l, x)=\frac 1{2\pi i}\oint_{\pd\hat\O_n}\frac{\sqrt{\xi}}{\xi-\l}\cA_{\a}(\xi, x)\frac{d\xi}{\sqrt{\xi}}=
\frac 1{2\pi i}\oint_{2|\xi |=a_n+b_n}\cA_{\a}(\xi, x)\frac{d\xi}{\xi-\l}.
$$
That is, $\cA_{\a}(\l, x)$ is holomorphic at an arbitrary disk $\bbD_{\frac{a_n+b_n}2}$.
\end{proof}

\begin{proof}[Proof of Lemma \ref{ledct}] The proof is based on the property (iii) in Theorem \ref{thdct}.
It is evident that $H_{\O}^\infty(\a)\subset H_{\O_n}^\infty(\hat\a)$, $\hat \a=\a| \pi_1( \O_n)$.
Further, let $\hat\a_m\to 1_{\pi^*_1(\O_n)}$. We define $\a_m$ such that for all generators $\g_j$ of $\pi_1(\O)$
$$
\a_m(\g_j)=\hat \a_m(\g_j)\ \text{if}\ \g_j\in  \pi_1(\O_n)\ \text{and}\ \a_m(\g_j)=1\ \text{otherwise}.
$$
Evidently $\a_m\to 1_{\pi^*_1(\O)}$.  For the minimizer $\hat \cW_{\hat \a_m}\in H^\infty_{\O_m}$ we have
$$
\hat \cW_{\hat \a_m}(\l_*)\ge \cW_{\a_m}(\l_*)\to 1\quad\text{as}\ m\to \infty.
$$
\end{proof}

\begin{remark}
If DCT fails, then remaining singularities on $\sE$ for a transfer matrix of the form \eqref{27apr4} are possible \cite{Yl1}.
\end{remark}

\section{Weyl-Titchmarsh functions}
Matrices
\begin{equation}\label{11oct1}
\cA_{\a}(\l,x)^{-1}=\begin{bmatrix} A_\a(\l,x)&C_{\a}(\l,x)\\
B_{\a}(\l.x)&D_{\a}(\l,x),
\end{bmatrix}
\end{equation}
which were defined in the previous section, form a monotonic family of $\cJ$ expanding matrix functions in $\bbC_+$. For an arbitrary such family  the Weyl circle is formed by values
\begin{equation*}
m=\frac{A(\l,x)w+B(\l,x)}{C(\l,x)w+D(\l,x)},\quad w\in\bbR\cup\{\infty\},
\end{equation*}
where  $\l\in\bbC_+$ and $x\in\bbR_+$ are fixed.
These circles are nesting in the upper half plane as $x$ increases, and in the limit converge either to a circle  or to a point. Due to the explicit formula \eqref{27apr5} we have the limit point case.

\begin{theorem}\label{th11oct1}
For an arbitrary Nevanlinna class function $w(\l)$ the following limit exists
\begin{equation}\label{11oct2}
m^\a_+(\l):=\lim_{x\to \infty}\frac{A_\a(\l,x)w(\l)+B_\a(\l,x)}{C_\a(\l,x)w(\l)+D_{\a}(\l,x)}=i\sqrt{\l}\frac{V_{\a+\fj}(\l)}{V_{\a}(\l)}.
\end{equation}
Moreover,
\begin{equation}\label{11oct3}
m^\a_-(\l)=-\overline{m^\a_+(\l)}=i\sqrt{\l}\frac{V_{-\a}(\l)}{V_{\fj-\a}(\l)}, \quad \text{for a.e.}\ \l\in\sE,
\end{equation}
and
\begin{equation}\label{11oct4}
R^\a(\l):=-\frac{1}{m_+^\a(\l)+m_-^\a(\l)}=\frac{i\cC_{\l_*}(\a)}{\sqrt\l}\prod_{j\ge 1}\frac{(\l-\l_j)\sqrt{(\l_*-a_j)(\l_*-b_j)}}{(\l_*-\l_j)\sqrt{(\l-a_j)(\l-b_j)}},
\end{equation}
where $\a=\cA(D)$, $D\in\cD(E)$.
\end{theorem}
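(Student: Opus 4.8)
The plan is to read the entries of $\cA_\a(\l,x)^{-1}$ off from the defining relation \eqref{27apr4} and then to let $x\to\infty$, exploiting that $|e^{2ix\t(\l)}|=e^{-2x\cM(\l)}\to0$ for $\l\in\bbC_+$. From \eqref{27apr4},
$$
\cA_\a(\l,x)^{-1}=\cV_{\a-\eta x}(\l)^{-1}\begin{bmatrix}e^{-ix\t(\l)}&0\\0&e^{ix\t(\l)}\end{bmatrix}\cV_\a(\l),
$$
and, by \eqref{27apr5} together with the Wronskian identity \eqref{10oct6}, $\det\cV_\b(\l)=i\sqrt\l$ for every $\b$, so $\cV_\b(\l)^{-1}$ is again an explicit $2\times2$ matrix built from $i\sqrt\l\,V_{\b+\fj}(\l),\,V_\b(\l),\,-i\sqrt\l\,V_{-\b}(\l),\,V_{\fj-\b}(\l)$. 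Inserting this into \eqref{11oct1} and into the Weyl fraction of \eqref{11oct2} and cancelling the common factors (the scalar prefactor and $e^{-ix\t(\l)}$), I obtain, for a fixed Nevanlinna function $w$,
$$
\frac{A_\a(\l,x)w(\l)+B_\a(\l,x)}{C_\a(\l,x)w(\l)+D_\a(\l,x)}=\frac{i\sqrt\l\,V_{\a+\fj}(\l)-i\sqrt\l\,V_{-\a}(\l)\,\mu_x(\l)\,e^{2ix\t(\l)}}{V_\a(\l)+V_{\fj-\a}(\l)\,\mu_x(\l)\,e^{2ix\t(\l)}},
$$
where $\mu_x(\l)=\bigl(i\sqrt\l\,V_{\a+\fj-\eta x}(\l)-w(\l)V_{\a-\eta x}(\l)\bigr)/\bigl(w(\l)V_{\fj-\a+\eta x}(\l)+i\sqrt\l\,V_{\eta x-\a}(\l)\bigr)$.

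The next step is to show the $e^{2ix\t(\l)}$ terms drop out, i.e.\ that $\mu_x(\l)$ stays bounded as $x\to\infty$. Its numerator is bounded because $\a\pm\eta x$ ranges over the compact group $\pi_1(\O)^*$, on which $\b\mapsto V_\b(\l)$ is continuous (Lemma \ref{l1oct10}). For the denominator I would write $w(\l)V_{\fj-\a+\eta x}(\l)+i\sqrt\l\,V_{\eta x-\a}(\l)=V_{\fj-\a+\eta x}(\l)\bigl(w(\l)+\psi_x(\l)\bigr)$ with $\psi_x(\l)=i\sqrt\l\,V_{-(\a-\eta x)}(\l)/V_{\fj-(\a-\eta x)}(\l)$, and evaluate \eqref{b12mar5} at the character $\fj-(\a-\eta x)$, using $2\fj=0_{\pi_1(\O)^*}$, to get
$$
\Im\psi_x(\l)=\frac{(\Im\l)\,k^{\fj-\a+\eta x}(\l,\l)}{\cC_{\l_*}(\a-\eta x)\,|V_{\fj-\a+\eta x}(\l)|^{2}}>0 .
$$
Since $\b\mapsto k^\b(\l,\l)$, $\b\mapsto\cC_{\l_*}(\b)$ and $\b\mapsto V_\b(\l)$ are continuous and nowhere vanishing on $\pi_1(\O)^*$ — the continuity of the reproducing kernels being exactly DCT (Theorem \ref{thdct}(ii)) and $\cC_{\l_*}$ being continuous by \eqref{b12apr7} — there is $c(\l)>0$ with $\Im\psi_x(\l)\ge c(\l)$ for all $x$, whence $|w(\l)+\psi_x(\l)|\ge\Im w(\l)+c(\l)\ge c(\l)$ for every Nevanlinna $w$; together with a uniform lower bound for $|V_{\fj-\a+\eta x}(\l)|$ this bounds $\mu_x(\l)$ uniformly in $x$. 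Hence the Weyl fraction tends to $i\sqrt\l\,V_{\a+\fj}(\l)/V_\a(\l)$, proving \eqref{11oct2}; in particular the limit is $w$-independent, so the Weyl circles collapse to a point. (Alternatively, $\|\cA_\a(\l,x)\|\asymp e^{x\cM(\l)}\to\infty$ forces the limit-point case by the standard dichotomy, after which it is enough to compute the limit for $w=0$.)

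For \eqref{11oct3} I would pass to the boundary values $\l=\xi\pm i0$, $\xi\in\sE$. By Lemma \ref{l15oct}, $\overline{V_\b(\l)}=V_{\fj-\b}(\l)$ for a.e.\ $\l\in\sE$; in particular $\overline{V_{\a+\fj}}=V_{-\a}$ and $\overline{V_\a}=V_{\fj-\a}$ there. Since the chosen branch of $\sqrt\l$ (cut along $\bbR_+$) takes real values on $\sE$, $\overline{i\sqrt\l}=-i\sqrt\l$, so $-\overline{m_+^\a(\l)}=i\sqrt\l\,V_{-\a}(\l)/V_{\fj-\a}(\l)=m_-^\a(\l)$ a.e.\ on $\sE$, which is \eqref{11oct3}.

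Finally, \eqref{11oct4} is pure algebra. By \eqref{11oct2}, \eqref{11oct3} and the Wronskian identity \eqref{10oct6}--\eqref{b12apr7} (valid at every $\l\in\O$, as in the proof of \eqref{b12mar5}),
$$
m_+^\a(\l)+m_-^\a(\l)=i\sqrt\l\,\frac{V_{\a+\fj}(\l)V_{\fj-\a}(\l)+V_\a(\l)V_{-\a}(\l)}{V_\a(\l)V_{\fj-\a}(\l)}=\frac{i\sqrt\l}{\cC_{\l_*}(\a)\,V_\a(\l)V_{\fj-\a}(\l)},
$$
whence $R^\a(\l)=i\cC_{\l_*}(\a)\,V_\a(\l)V_{\fj-\a}(\l)/\sqrt\l$. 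With $\a=\cA(D)$, Lemma \ref{l15oct} gives $V_{\fj-\a}(\l)=V(\l,D_*)$, and by \eqref{5oct2}--\eqref{5oct3} the outer factor $O(\l,\cdot)$ depends on $D$ only through the nodes $\l_j$ (so $O(\l,D_*)=O(\l,D)$) while $I(\l,D_*)=I(\l,D)^{-1}$; hence $V_\a(\l)V_{\fj-\a}(\l)=\sqrt{O(\l,D)I(\l,D)}\,\sqrt{O(\l,D)I(\l,D)^{-1}}=O(\l,D)$, and reading $O(\l,D)$ off from \eqref{5oct1}--\eqref{5oct2} gives exactly the product in \eqref{11oct4}. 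The one genuinely delicate point is the uniform-in-$x$ lower bound on the Weyl denominator in the second step: as $\a-\eta x$ wanders densely over the whole character group one must control the reproducing kernels and $\cC_{\l_*}(\cdot)$ simultaneously, and this is precisely where DCT is indispensable.
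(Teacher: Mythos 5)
Your argument is correct and follows essentially the same route as the paper, which proves \eqref{11oct2} directly from the definition \eqref{27apr4}--\eqref{27apr5} (with the limit-point collapse coming from $|e^{2ix\t(\l)}|=e^{-2x\cM(\l)}\to0$), \eqref{11oct3} from Lemma \ref{l15oct}, and \eqref{11oct4} from the Wronskian identity together with the canonical-product form of $V_{\cA(D)}$. Your explicit uniform bound on $\mu_x$ via $\Im\psi_x\ge c(\l)>0$ (using compactness of $\pi_1(\O)^*$, continuity of $\b\mapsto V_\b(\l),\,k^\b(\l,\l),\,\cC_{\l_*}(\b)$, i.e. DCT) simply spells out what the paper leaves implicit in the phrase ``we have the limit point case.''
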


Note that $m_{\pm}$ belongs to the Stieltjes, we will call them Weyl-Titchmarsh  functions. The property \eqref{11oct3} is  reflectionless (on the set $\sE$). 

\begin{proof}[Proof of Theorem \ref{11oct1}]
\eqref{11oct2} follows directly from the definition \eqref{27apr5}. \eqref{11oct3} follows from Lemma \ref{l15oct}. \eqref{11oct4} is a consequence of the Wronskian identity and the definition of $V_{\a(D)}$.

\end{proof}
In a sense we will invert Theorem \ref{th11oct1}.

\begin{definition} Let $\sE=\bbR_+\setminus\cup_{j\ge 1}(a_j,b_j)$ be  of positive Lebesgue measure.
We say that $m_+\in\cS$ belongs to the set $m(\sE)$ if this function is reflectionless on $\sE$, that is, there exists $m_-\in \cS$ such that 
$m_-(\l)=-\overline{m_+(\l)}$ for a.e. $\l\in\sE$, and the both functions
\begin{equation}\label{11oct6}
R_0(\l)=-\frac{1}{m_+(\l)+m_-(\l)},\quad R_1(\l)=\frac{m_+(\l)m_-(\l)}{m_+(\l)+m_-(\l)}
\end{equation}
are holomorphic in $\O=\bbC\setminus \sE$, being extended in the lower half plane due to the symmetry principle
$\overline{R _i(\bar\l)}=R_i(\l)$, $\l\in \O$.
\end{definition}

Note that a function of the Nevanlinna class  is defined uniquely by its limit values on a set of positive Lebesgue measure. Thus $m_+$ defines 
$m_-$  uniquely.

 Further, automatically, $R_i$ belongs to the Nevanlinna class. 
Therefore they can be restored (up to positive multipliers) by their arguments on the real axis due to \eqref{11oct8}.
The boundary values of $R_i$ on $E$ are pure imaginary, that is, $\arg R_i(\xi)=\pi/2$, a.e. $\xi\in\sE$. On the complement $\bbR\setminus \sE$ they are real.
Since $R_i$ is increasing in each gap, there is a unique point $\l_j^{(i)}\in[a_j,b_j] $, $j\ge 0$, such that $\arg R_{i}(\xi)=0$ in $(\l^{(i)}_j, b_j)$ (respectively, 
$\arg R_{i}(\xi)=\pi$ in 
 $(a_j, \l_j)$; one of these sets could be empty). 
 As the result we have
\begin{equation}\label{11oct10}
 R_i(\l)=R_i(\l_*)\frac{\l-\l^{(i)}_0}{\l_*-\l^{(i)}_0}\sqrt{\frac{\l_*}{\l}}\prod_{j\ge 1}\frac{(\l-\l^{(i)}_j)\sqrt{(\l_*-a_j)(\l_*-b_j)}}{(\l_*-\l^{(i)}_j)\sqrt{(\l-a_j)(\l-b_j)}},
\end{equation}
 where $\l_*<0$ is a normalization point, $\l_*\not=\l^{(i)}_0$, $-\infty\le \l_0^{(i)}\le 0$. That is, $R_i(\l)$ is completely defined by the collections of $\{\l^{(i)}_j\}_{j\ge 0}$ and $R_{i}(\l_*)$.
 
 \begin{definition}
If  $m_+\in m(\sE)$ meets the additional conditions 
 \begin{itemize}
 \item[(a)] $\l_0^{(0)}=-\infty$ and $\l_0^{(1)}=0$,
 \item[(b)] along the negative half axis $ \lim_{\l\to -0} m_+(\l)=0,
 $
 \end{itemize}
 we say that $m_+\in m_0(\sE)$.
 \end{definition}
 
 Note that if (a) holds, then the increasing function $m_+$ is bounded on the positive half axis, that is, the limit exists, but not necessarily $0$. Thus (b) is a certain (additive) normanlization condition.
 
 Going back to Widom domains with DCT we have the following  important property.
 \begin{theorem}[see \cite{PoRem, Yud11}]
Let $\O=\bbC\setminus\sE$ be of Widom type and DCT hold. Assume that $R_i(\l)$ is of the form \eqref{11oct10} corresponding to an arbitrary collection of $\l_j^{(i)}\in [a_j,b_j]$, 
$-\infty \le \l^{(i)}_0\le 0$. Then  the measures, corresponding to the Nevanlinna functions $\pm R_i(\l)^{\pm 1}$  in their integral representations \eqref{11oct7}, are absolutely continuous on $\sE$. In particular, this implies that for $\l_0^{(0)}=-\infty$
\begin{equation}\label{11oct11}
\lim_{\l\to -0}R_0(\l)=\infty,\quad \lim_{\l\to -\infty}R_0(\l)=0.
\end{equation}
\end{theorem}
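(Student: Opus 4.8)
The plan is to reduce the assertion to a one–dimensional fact about the outer functions $O(\,\cdot\,,D)$ of \eqref{5oct3} and then to settle that fact by a scalar estimate; this is the route of \cite{PoRem, Yud11} in the notation of the present paper.

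\emph{Step 1 (an explicit formula for $R_i$).} The nodes $\{\l_j^{(i)}\}_{j\ge1}$, together with arbitrary signs, form a divisor $D_i\in\cD(E)$. Comparing \eqref{11oct10} with \eqref{5oct3} — equivalently, reading \eqref{11oct4} together with the Wronskian identity \eqref{10oct6} and Lemma~\ref{l15oct}, which give $R^\a(\l)=\frac{i\cC(\a)}{\sqrt\l}V_\a(\l)V_{\fj-\a}(\l)=\frac{i\cC(\a)}{\sqrt\l}O(\l,D)$ — shows that
$$
R_i(\l)=c_i\,\frac{\l-\l_0^{(i)}}{\sqrt\l}\,O(\l,D_i),\qquad
O(\l,D_i)=\exp\left\{\int\left(\tfrac{1}{\xi-\l}-\tfrac{1}{\xi-\l_*}\right)\chi_{D_i}(\xi)\,d\xi\right\},
$$
with a constant $c_i$, the factor $\l-\l_0^{(i)}$ read as $1$ when $\l_0^{(i)}=-\infty$, and $|\chi_{D_i}|\le\tfrac12$. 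Replacing $\chi_{D_i}$ by $-\chi_{D_i}$ gives the same shape for $R_i^{-1}$, and the remaining members of $\{\pm R_i^{\pm1}\}$ differ only by an overall sign. This is the only place the hypotheses enter: the Widom property makes the canonical products converge, and DCT is precisely what validates \eqref{11oct4}, i.e.\ what forbids extra singular factors of $R_i$ on $\sE$ (cf.\ the remark following the proof that the transfer matrices are entire).

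\emph{Step 2 (absolute continuity).} Fix a compact $K\subset\sE\cap(0,\infty)$ disjoint from the gap endpoints. Since $\log O(\,\cdot\,,D_i)$ is a Cauchy integral, $\log|O(\,\cdot\,,D_i)|$ is, in each half–plane, the Poisson integral of its boundary values, which up to a bounded term are a constant multiple of the conjugate function of $\chi_{D_i}$; because $|\chi_{D_i}|\le\tfrac12$ the boundary modulus $h_i:=|O(\,\cdot\,+i0,D_i)|$ is finite a.e.\ and belongs to $L^1_{\mathrm{loc}}(\bbR)$ (this follows from the exponential integrability of the conjugate function of a bounded function, or more directly from $R_i$ being of Nevanlinna class, together with the fact that \eqref{5oct3} produces at worst the integrable singularities $|\,\cdot\,-a_j|^{-1/2},\ |\,\cdot\,-b_j|^{-1/2}$ in the gaps). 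By Jensen's inequality applied to this Poisson representation, $|O(\,\cdot\,+iy,D_i)|\le P_y*h_i$ with $P_y(t)=\frac1\pi\frac{y}{t^2+y^2}$, so $\{|O(\,\cdot\,+iy,D_i)|\}_{0<y\le1}$ is uniformly integrable on $K$. On $K$ the factor $\frac{\l-\l_0^{(i)}}{\sqrt\l}$ (resp.\ its reciprocal) is bounded, hence $\Im R_i^{\pm1}(\,\cdot\,+iy)$ is dominated on $K$, uniformly in $y$, by an integrable function; by Fatou it converges a.e.\ to $\Im R_i^{\pm1}(\,\cdot\,+i0)$, and Vitali's theorem together with Stieltjes inversion gives $\sigma_{R_i^{\pm1}}|_K=\frac1\pi\Im R_i^{\pm1}(\,\cdot\,+i0)\,d\xi$. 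Since $R_i^{\pm1}$ continues analytically across $\bbR\setminus\sE$, its measure is carried by $\sE$; exhausting $\sE\cap(0,\infty)$ by such $K$ we obtain that the measures of $\pm R_i^{\pm1}$ are absolutely continuous on $\sE$ (modulo at most an atom at the endpoint $0$, which is irrelevant here).

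\emph{Step 3 (the limits \eqref{11oct11}).} When $\l_0^{(0)}=-\infty$ the function $R_0$ is positive and increasing on $(-\infty,0)$, so both one–sided limits exist in $[0,\infty]$. Writing $-1/R_0=m_++m_-$ and using that $m_+\in\cS_0$ has a strictly positive linear coefficient in \eqref{115oct7} while $m_-\in\cS$ has a nonnegative one, $-1/R_0$ grows at least linearly as $\l\to-\infty$, which forces $R_0(\l)\to0$; the complementary normalization ($\l_0^{(1)}=0$ and $\lim_{\l\to-0}m_+(\l)=0$) then forces $R_0(\l)\to\infty$ as $\l\to-0$, as in \cite{PoRem, Yud11}. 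The only genuinely nontrivial step is the absence of a singular part in Step 2 — the uniform–integrability bound near $\sE$ — and it rests on the sharp estimate $|\chi_{D_i}|\le\tfrac12$; when $\sE$ has interior of full measure one can avoid it entirely, since then $O(\,\cdot\,,D_i)$ continues analytically across $\sE^\circ$ and the boundary jump of $R_i$ there comes only from the explicit factor $1/\sqrt\l$.
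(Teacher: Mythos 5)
Your Step 2 contains the essential gap, and it is exactly where DCT should have entered but never does. Your argument, as written, uses DCT only to "validate" \eqref{11oct4} — but \eqref{11oct4} is not what is being proved: in the theorem $R_i$ is \emph{defined} by the explicit product \eqref{11oct10} for an arbitrary divisor, and writing it as $c_i\frac{\l-\l_0^{(i)}}{\sqrt\l}O(\l,D_i)$ with $O$ as in \eqref{5oct3} needs no DCT at all. So if your uniform-integrability argument were correct, it would prove absolute continuity of the Herglotz measures of \eqref{11oct10} in any Widom (indeed any Denjoy) domain. That conclusion is false: the paper itself remarks (after Corollary \ref{c15oc1}'s section, in Section 4) that a singular component for the measure of a reflectionless function is possible when DCT fails in a Widom domain, and Yuditskii's paper cited in the statement is precisely about such negative examples. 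The flawed step is the claim that $h_i=|O(\cdot+i0,D_i)|$ is locally integrable because $|\chi_{D_i}|\le\tfrac12$. Since $\log h_i$ is $\pi$ times the Hilbert transform of $\chi_{D_i}$, the bound $\tfrac12$ puts you exactly at the critical constant $\l\a=\pi/2$ of Zygmund's theorem on $e^{\l|\widetilde f|}$, where exponential integrability is not guaranteed; for a single gap you get the harmless $|\xi-a_j|^{-1/2}$, but when infinitely many gaps accumulate (the Cantor-type situation which is the whole point here) the critical-exponent contributions can pile up and local integrability, hence your uniform integrability and the Vitali/Stieltjes-inversion step, genuinely fails. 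Note also that the Nevanlinna-class fallback only controls $h_i$ on $\sE$ (where $|R_i|=\Im R_i$ is Poisson-integrable); the Poisson majorant $P_y*h_i$ requires the boundary modulus on all of $\bbR$, in particular on the gaps, where $|R_i|$ is real and not controlled by the Herglotz measure — and since every compact $K\subset\sE$ of positive measure has gaps accumulating at its points, you cannot localize away from them.

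A correct proof must use DCT in a structural way — in \cite{PoRem, Yud11} this happens through properties equivalent to DCT (e.g.\ item (iv) of Theorem \ref{thdct}, continuity of the reproducing kernels, or the homeomorphism property of the Abel map), which is what rules out the singular component; it is not a consequence of the pointwise bound $|\chi_{D_i}|\le\tfrac12$ alone. Step 3 is also shakier than you present it: at this point of the paper no functions $m_\pm$ with $-1/R_0=m_++m_-$ are given (they are constructed later, in Theorem \ref{thoct12}, \emph{using} this theorem), and the strict positivity of the linear coefficient in \eqref{115oct7} is not available for an arbitrary divisor; the limits \eqref{11oct11} should instead be read off from the already-established absolute continuity together with the form \eqref{11oct10} with $\l_0^{(0)}=-\infty$ (no mass at $0$ or at $\infty$ beyond the a.c.\ part forces $-1/R_0\to0$ at $-0$ and $R_0\to0$ at $-\infty$).
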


\begin{corollary}
If $\O$ is of Widom type and DCT holds then $m_+\in m_0(\sE)$ implies $m_{-}\in m_0(\sE)$.
\end{corollary}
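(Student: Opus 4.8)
The plan is to exploit the fact that the two symmetric combinations $R_0$ and $R_1$ in \eqref{11oct6} do not distinguish $m_+$ from $m_-$, so that every ingredient in the definition of $m_0(\sE)$ transfers automatically from $m_+$ to $m_-$ except the normalization $\lim_{\l\to-0}m_-(\l)=0$, which I would then extract from \eqref{11oct11}. Concretely, by \eqref{11oct6} the functions $R_0=-1/(m_++m_-)$ and $R_1=m_+m_-/(m_++m_-)$ are invariant under the transposition $m_+\leftrightarrow m_-$. Since a Stieltjes function is uniquely determined by its boundary values on a set of positive Lebesgue measure, the reflectionless partner of $m_-$ on $\sE$ is precisely $m_+$, and the pair $(m_-,m_+)$ produces the same $R_0,R_1$ as the pair $(m_+,m_-)$; in particular they remain holomorphic in $\O$ and symmetric about $\bbR$, so $m_-\in m(\sE)$. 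Moreover the marked points $\l_j^{(i)}\in[a_j,b_j]$ entering the product formula \eqref{11oct10} are read off from the sign pattern of $R_i$ alone, hence condition (a) --- $\l_0^{(0)}=-\infty$ and $\l_0^{(1)}=0$ --- holds for $m_-$ with the very same points.

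It then remains only to verify condition (b) for $m_-$, i.e. $\lim_{\l\to-0}m_-(\l)=0$. From the first identity in \eqref{11oct6} one has $m_-(\l)=-1/R_0(\l)-m_+(\l)$. Since $R_0$ is of the form \eqref{11oct10} with $\l_0^{(0)}=-\infty$ and $\O$ is of Widom type with DCT, the theorem of \cite{PoRem, Yud11} applies to $R_0$ and yields \eqref{11oct11}, so that $R_0(\l)\to\infty$ and hence $1/R_0(\l)\to0$ as $\l\to-0$ along the negative half axis. Combined with $m_+(\l)\to0$ there (condition (b) for $m_+$), this forces $m_-(\l)\to0$; as $m_-\in\cS$ this means $m_-\in\cS_0$, and together with the reflectionless property, the holomorphy of $R_0,R_1$, and condition (a) established above we conclude $m_-\in m_0(\sE)$.

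The only step that genuinely uses the Widom/DCT hypothesis --- and the only one that is not pure bookkeeping about the symmetry of $R_0,R_1$ and the uniqueness of the reflectionless partner --- is the invocation of the cited theorem to secure $\lim_{\l\to-0}R_0(\l)=\infty$. Once that is available the conclusion is immediate, so I do not anticipate a real obstacle.
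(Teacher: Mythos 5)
Your proof is correct and follows the paper's own route: the paper's proof is literally ``By \eqref{11oct11}'', i.e.\ it relies on the symmetry of $R_0,R_1$ under $m_+\leftrightarrow m_-$ together with $\lim_{\l\to-0}R_0(\l)=\infty$ to get $m_-(\l)=-1/R_0(\l)-m_+(\l)\to 0$, which is exactly the argument you spell out. The only difference is that you make the bookkeeping (transfer of the reflectionless property, holomorphy of $R_0,R_1$, and condition (a)) explicit, which the paper leaves implicit.
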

 \begin{proof}
 By \eqref{11oct11}.
 \end{proof}
 
 \begin{remark}
 Once again we note importance of DCT property. A singular component for a measure, associated to a reflectionless function,  is possible if  DCT fails in a Widom domain, as well as $m_-$ not necessarily belongs to $m_0(\sE)$ for
 a certain $m_+\in m_0(E)$.
 \end{remark}
 
 \begin{theorem} \label{thoct12}
 As soon as DCT holds, 
 one can parametrize the set $m_0(\sE)$ by the following collection of data $\{R_0(\l_*),D\}\in\bbR_+\times \cD(\sE)$.
 \end{theorem}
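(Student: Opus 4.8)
The plan is to assemble a bijection $m_+\mapsto\{R_0(\l_*),D\}$ between $m_0(\sE)$ and $\bbR_+\times\cD(\sE)$ out of three facts already at our disposal: the parametrization $m_0(\sE)\simeq\bbR_+\times\pi_1(\O)^*$ furnished by Theorem~\ref{th9} (via the character $\a$ occurring in \eqref{15oct5}), the homeomorphism $\cA: \cD(\sE)\to\pi_1(\O)^*$ which holds because DCT holds, and the scaling $m_+\mapsto t m_+$, $t\in\bbR_+$, of the set $m_0(\sE)$.

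First I would describe the map and check it is well defined. Given $m_+\in m_0(\sE)$, Theorem~\ref{th9} yields a unique $\a\in\pi_1(\O)^*$ with $m_+=\frac{m_+(\l_*)}{i\sqrt{\l_*}}\,\fm_+^\a$; put $D:=\cA^{-1}(\a)$, legitimate since DCT makes $\cA$ a homeomorphism. That $R_0(\l_*)$ belongs to $\bbR_+$ is seen at the interior point $\l_*\in\bbR_-\subset\O$: both $m_+$ and $m_-$ lie in $m_0(\sE)$ (preceding corollary), are increasing on $\bbR_-$ with limit $0$ at $-0$, hence negative at $\l_*$, so $R_0(\l_*)=-1/(m_+(\l_*)+m_-(\l_*))\in(0,\infty)$.

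Next I would exploit the scaling. For $t>0$ the companion of $t m_+$ is $t m_-$ (uniqueness of the pseudocontinuation), so $R_0\mapsto t^{-1}R_0$ and $R_1\mapsto t R_1$ remain holomorphic in $\O$, condition (b) is untouched, and condition (a) persists since $R_0,R_1$ keep the same signs on $\bbR$; as multiplication by a positive constant does not alter the character, $t m_+$ stays in $m_0(\sE)$ with the same $\a$ and hence the same $D$, and by Theorem~\ref{th9} these multiples of $\fm_+^\a$ exhaust all of $m_0(\sE)$ of character $\a$. Thus each fibre over a fixed $D$ is a single $\bbR_+$-orbit, and writing $\rho(\a):=R_0(\l_*)$ for the representative $m_+=\fm_+^\a$ --- a positive constant depending only on $D=\cA^{-1}(\a)$, computable from $\cC_{\l_*}(\a)$ via \eqref{11oct4} and \eqref{b12apr7} --- one gets $R_0(\l_*)=\rho(\a)/t$ along the orbit, so $t\mapsto R_0(\l_*)$ is a strictly decreasing bijection of $\bbR_+$ onto itself. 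Injectivity of $m_+\mapsto\{R_0(\l_*),D\}$ follows, equal $D$ forcing equal $\a$ and equal orbit and then equal $R_0(\l_*)$ forcing equal $t$; for surjectivity, given $\{r,D\}$ I would set $\a:=\cA(D)$, $t:=\rho(\a)/r$, $m_+:=t\,\fm_+^\a$, which lies in $m_0(\sE)$ by the ``if'' part of Theorem~\ref{th9} and the scaling invariance, has character $\a$ and therefore divisor $D$, and satisfies $R_0(\l_*)=\rho(\a)/t=r$. Continuity of this parametrization and of its inverse would come from continuity of $\a\mapsto v_{\a,\l_*}$ (Lemma~\ref{l1oct10}) and of $\cA^{-1}$.

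The step I expect to be the crux --- and the only one that truly uses DCT --- is the identification of the \emph{full} divisor $D$, both the points $\l_j$ and the signs $\e_j$, with the character $\a$: this is precisely the Abel-map homeomorphism of \cite{SY97}, valid exactly because DCT holds. Without it distinct divisors could carry the same character, the $\cD(\sE)$-coordinate would be ambiguous, and the sign-independent product formula \eqref{11oct4} for $R_0$ alone would not recover $m_+$.
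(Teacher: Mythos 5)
Your argument is circular with respect to the paper's own development. You take Theorem \ref{th9} --- the assertion that every $m_+\in m_0(\sE)$ is of the form \eqref{15oct5} for a (unique) character $\a$ --- as an available ingredient, but in the paper that theorem is only announced in the introduction; its proof is exactly what Section 4 is assembling, and it is obtained from Theorem \ref{thoct12} together with the closing Lemma of that section (the unique representation $m_+=i\sqrt{\l}\,V_2/V_1$ with $V_1=V(\l,D)$ and $V_2$ a constant multiple of $V(\l,D^{(1)})$, $\cA(D^{(1)})=\a+\fj$). So invoking Theorem \ref{th9} to prove Theorem \ref{thoct12} begs the question: the hard direction --- that an arbitrary reflectionless, normalized $m_+$ is captured by any such parametrization at all --- is precisely what has to be established here. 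Your scaling and normalization bookkeeping (the $\bbR_+$-orbit structure, $R_0(\l_*)\mapsto t^{-1}R_0(\l_*)$, the relation to $\cC_{\l_*}(\a)$) is fine, but it is the easy part.

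What is missing is the analytic content of the paper's proof, which never mentions the character group. Given $m_+\in m_0(\sE)$, one writes $-1/R_0=m_++m_-$ in the Stieltjes-type form \eqref{12oct1}; the quoted Poltoratski--Remling/Yuditskii theorem (this, not the Abel map, is where DCT enters in this proof) guarantees that the measure of $-1/R_0$ has no singular component on $\sE$, and since $\Im m_+=\Im m_-$ a.e.\ on $\sE$ the absolutely continuous part must split equally between $m_+$ and $m_-$. Each point mass $\s^{(0)}_j$ at $\l_j$ in a gap must be assigned \emph{entirely} to $m_+$ or to $m_-$, since otherwise $R_1$ in \eqref{11oct6} would acquire a pole at $\l_j$; this dichotomy is what produces the sign $\e_j$, i.e.\ the second half of the divisor datum. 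The outcome is the reconstruction formula \eqref{12oct3}, which gives injectivity of $m_+\mapsto\{R_0(\l_*),D\}$, and reading \eqref{12oct2}--\eqref{12oct3} backwards from an arbitrary pair in $\bbR_+\times\cD(\sE)$ gives surjectivity. None of these steps appears in your proposal, and your closing claim that the only genuine use of DCT is the Abel-map homeomorphism misidentifies the mechanism: what DCT buys in Theorem \ref{thoct12} is the absence of a singular component (whose possible presence, as the paper's subsequent Remark notes, is exactly how the parametrization breaks down when DCT fails).
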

 
 \begin{proof}
 First, we recall that the Nevanlinna functions $w$, which can be extended by the symmetry through $\bbR_-$ in the lower half plane  and such that
 $\lim_{\l\to-0}w(\l)=0$, allows the following representation
\begin{equation}\label{12oct1}
w(\l)=a\l+\int_{\bbR_+}\frac{\l}{\xi-\l}d\s(\xi),\quad a\ge 0,\  \int_{\bbR_+}\frac{d\sigma(\xi)}{1+\xi}<\infty.
\end{equation}
Indeed, we can represent it in the form
$$
w(\l)=w(\l_0)+a(\l-\l_0)+\int_{\bbR_+}\left(\frac{1}{\xi-\l}-\frac{1}{\xi-\l_0}\right)d\tilde \s(\xi)\quad a\ge 0,\  \int_{\bbR_+}\frac{d\tilde\sigma(\xi)}{1+\xi^2}<\infty,
$$
where $\l_0<0$. Then, pass to the limit as $\l_0\to-0$. We get \eqref{12oct1} with $d\s=\frac 1 {\xi}d \tilde\s \ge 0$.

Let now $m_+\in m_0(\sE)$. Then we have the collection $R_0(\l_*)$ and $\l_j\in[a_j,b_j]$ such that
\begin{equation}\label{12oct2}
 R_0(\l)=R_0(\l_*)\sqrt{\frac{\l_*}{\l}}\prod_{j\ge 1}\frac{(\l-\l_j)\sqrt{(\l_*-a_j)(\l_*-b_j)}}{(\l_*-\l_j)\sqrt{(\l-a_j)(\l-b_j)}}.
\end{equation}
On the other hand, by \eqref{12oct1}
$$
-\frac 1{R_{0}(\l)}= a\l+\int_{\sE}\frac{\l}{\xi-\l}d\s_0(\xi)+\sum_{\l_j\in(a_j,b_j)}\frac{\l\s^{(0)}_j}{\l_j-\l}.
$$
Since, there is no mass points on $\sE$, including infinity, we have $a=0$. The measure $d\sigma_0$ is absolutely continuous. Due to
$$
-\frac 1{R_{0}(\l)}=m_+(\l)+m_-(\l)
$$
we have to distribute this measure as $d\s_0=d\s_++d\sigma_-$. That is, both measures are absolutely continuous and due to
 $\Im m_+(\xi)=\Im m_{-}(\xi)$, $\xi\in\sE$, they are equal. With respect to $\s_j^{(0)}$, if $\s_j^{(0)}=\s_j^{+}+\s_j^{-}$ and both values are positive, then
 $R_1(\l)$ has a pole in $\l_j$, see \eqref{11oct6}. We write $\e_j=\pm 1$ if  $\s_j^{(0)}=\s_j^{\pm}$.
 
 As the  result we have
 \begin{equation}\label{12oct3}
m_{\pm}(\l)=\frac 1 2\left(-\frac 1{R_{0}(\l)}
\pm \sum_{\l_j\in(a_j,b_j)}\frac{\l\s^{(0)}_j\e_j}{\l_j-\l}
\right).
\end{equation}
Vice versa, for an arbitrary collection from $\bbR_+\times \cD(\sE)$, we define $R_0(\l)$ by \eqref{12oct2} and
$m_+(\l)$ by \eqref{12oct3}. We have $m_+(\l)\in m_0(\sE)$.
 \end{proof}  
 
 \begin{remark}
 Comparing \eqref{11oct2} and \eqref{12oct3} we get that, in the sense of Theorem \ref{thoct12}, to the function $m_+^\a$ corresponds exactly that divisor $D$ for which $\cA(D)=\a$.
 Comparing \eqref{11oct4} and \eqref{12oct2} we have $\sqrt{\l_*} R_0(\l_*)=i\cC_{\l_*}(\a)$.
 \end{remark}
 
 \begin{lemma}
 For a function $m_+\in m_0(\sE)$ there exits a unique representation
 \begin{equation}\label{120ct5}
m_+(\l)=i\sqrt{\l}\frac{V_2(\l)}{V_{1}(\l)},
\end{equation}
where $V_1,V_2\in\cN_+(\O)$
with mutually simple inner parts, which obey the Wronskian identity
$$
\sqrt{|\l_*|}R_0(\l_*)\det\begin{bmatrix} V_2(\l) & V_1(\l)\\
-\overline{V_2(\l)}& \overline{V_1(\l)}
\end{bmatrix}=1, \quad \l\in\sE.
$$
 \end{lemma}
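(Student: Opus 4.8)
The plan is to read the statement off Theorem~\ref{th9}: it is nothing but a canonical form of the parametrization $m_0(\sE)\simeq\bbR_+\times\pi_1(\O)^*$, with the Wronskian identity serving as the normalization that fixes the scaling of $V_1,V_2$.

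\emph{Existence.} Let $m_+\in m_0(\sE)$; then $m_+\not\equiv0$ (otherwise $R_0\equiv\infty$), and by Theorem~\ref{th9} there is a unique $\a\in\pi_1(\O)^*$ with $m_+=\frac{m_+(\l_*)}{i\sqrt{\l_*}}\fm_+^\a$. Unfolding $\fm_+^\a=i\sqrt\l\,v_{\a+\fj}/v_\a$ and $v_\b=V_\b/V_\b(\l_*)$ (Lemma~\ref{l1oct10}), I put
\begin{equation*}
V_1:=V_\a,\qquad V_2:=c\,V_{\a+\fj},\qquad c:=\frac{m_+(\l_*)\,V_\a(\l_*)}{i\sqrt{\l_*}\,V_{\a+\fj}(\l_*)},
\end{equation*}
so that $m_+=i\sqrt\l\,V_2/V_1$. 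Here $c>0$: indeed $i\sqrt{\l_*}=-\sqrt{|\l_*|}<0$, $m_+(\l_*)<0$ by \eqref{115oct7}, and $V_\b(\l_*)=\prod_j\Phi_{\l_j}(\l_*)^{\e_j/2}>0$ directly from \eqref{5oct1}. Moreover each $V_\b$ is, gap by gap, an outer factor times the Blaschke product $\prod_{j:\e_j=1}\Phi_{\l_j}$ (for $\e_j=-1$ the $j$-th factor of \eqref{5oct1} is analytic and nonvanishing at $\l_j$ because $\Phi_{\l_j}$ has a simple zero there), the product converging since $\sum_j\cG(\l_j,\l_*)\le\sum_j\cG(c_j,\l_*)<\infty$ in a Widom domain; hence $V_1,V_2\in\cN_+(\O)$ with purely Blaschke inner parts.

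\emph{Coprimality and the Wronskian.} Coprimality of the inner parts of $V_1,V_2$, equivalently of $V_\a$ and $V_{\a+\fj}$, follows from the Wronskian identity \eqref{10oct6}, which states $\cC(\a)\bigl(V_{\a+\fj}V_{\fj-\a}+V_\a V_{-\a}\bigr)\equiv1$: a common zero of $V_\a$ and $V_{\a+\fj}$ in $\O$ would annihilate both summands and contradict the value $1\ne0$, and since the $V_\b$ carry no singular inner factor this rules out any common inner divisor. For the normalization, on $\sE$ one has $\overline{V_\a}=V_{\fj-\a}$ and $\overline{V_{\a+\fj}}=V_{-\a}$ (Lemma~\ref{l15oct}), so the determinant in the statement equals $V_2\overline{V_1}+V_1\overline{V_2}=c\bigl(V_{\a+\fj}V_{\fj-\a}+V_\a V_{-\a}\bigr)=c/\cC(\a)$ there; on the other hand $m_+=c\,m_+^\a$ (hence $m_\pm=c\,m_\pm^\a$ and $R_0=\frac1c R^\a$), whence $\sqrt{|\l_*|}\,R_0(\l_*)=\cC(\a)/c$ by \eqref{11oct4} (using $\sqrt{\l_*}=i\sqrt{|\l_*|}$). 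Multiplying the two displays gives $1$, so the required Wronskian identity holds and no rescaling of $V_1,V_2$ is needed.

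\emph{Uniqueness.} If $m_+=i\sqrt\l\,V_2/V_1=i\sqrt\l\,\hat V_2/\hat V_1$ with both pairs as in the statement, then $V_2\hat V_1=V_1\hat V_2$; since inner parts multiply under products and are coprime on each side, the inner parts of $V_i$ and $\hat V_i$ coincide up to unimodular constants, so $\psi:=V_1/\hat V_1=V_2/\hat V_2$ is outer with $\psi,\psi^{-1}\in\cN_+(\O)$. Equating the two Wronskian identities — whose common right-hand side $\bigl(\sqrt{|\l_*|}R_0(\l_*)\bigr)^{-1}$ depends only on $m_+$ and is nonzero a.e.\ on $\sE$ — gives $|\psi|\equiv1$ a.e.\ on $\sE$; an outer function with unimodular boundary values is a unimodular constant, which in turn has trivial character, so $\a$ is uniquely determined, and under the positivity normalization $V_i(\l_*)>0$ built into the construction $\psi\equiv1$. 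I expect the coprimality step to be the main point: it rests on the two facts, both supplied by Section~2, that the canonical products $V_\b$ have no singular inner part and that the Wronskian constant $1/\cC(\a)$ in \eqref{10oct6} is genuinely nonzero. The remaining subtleties are purely bookkeeping — tracking the positive scalar $c$ and observing that uniqueness is literal only once the implicit normalization $V_i(\l_*)>0$ is imposed.
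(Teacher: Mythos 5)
Your existence step is circular within the paper's own architecture. You produce the character $\a$ by invoking Theorem~\ref{th9}, i.e. the representation $m_+=\frac{m_+(\l_*)}{i\sqrt{\l_*}}\fm_+^\a$; but the ``only if'' direction of Theorem~\ref{th9} --- that an \emph{arbitrary} $m_+\in m_0(\sE)$ has this form --- is nowhere proved in the paper before this lemma, and this lemma is precisely the vehicle through which it is obtained: the paper's proof starts from an arbitrary $m_+\in m_0(\sE)$ and \emph{constructs} $V_1,V_2$, showing $V_1=V(\l,D)$ and $V_2=\sqrt{m_+(\l_*)m_-(\l_*)/(-\l_*)}\,V(\l,D^{(1)})$ with $\cA(D^{(1)})=\cA(D)+\fj$, which is exactly \eqref{15oct5}. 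The substantive inputs you skip are exactly the ones the paper uses: Theorem D of \cite{SY97} (under DCT, $m_+$ is of bounded characteristic in $\O$ and has no singular component in its inner part), together with the boundary identities on $\sE$, namely $(m_+-\overline{m_+})/i=i/R_0$ and $|m_+|^2=-R_1/R_0$, which via the product formula \eqref{12oct2} force $|V_1|^2=O(\l,D)$ (fixing the outer part of $V_1$), while the gap poles of $m_+$ fix its Blaschke inner part, and similarly for $V_2$. Without the ``no singular inner part'' input nothing in your argument excludes a singular factor, and without an independent proof of Theorem~\ref{th9} your $\a$ simply is not available. If you insist on deducing the lemma rather than proving it directly, you would at least have to argue from Theorem~\ref{thoct12} and the remark following it (identifying the data of $m_+^{\cA(D)}$ inside that parametrization and using that elements of $m_0(\sE)$ with a common divisor are positive multiples of one another), not quote Theorem~\ref{th9} itself, whose proof the paper delivers only through this lemma.

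The remaining parts of your proposal are sound and even complement the paper's terse write-up: the verification of the normalization $\sqrt{|\l_*|}\,R_0(\l_*)\bigl(V_2\overline{V_1}+V_1\overline{V_2}\bigr)=1$ on $\sE$ from \eqref{10oct6}, Lemma~\ref{l15oct} and \eqref{11oct4}; the coprimality of the inner parts via the Wronskian identity; and the uniqueness argument (coprime inner parts give $\psi=V_1/\hat V_1=V_2/\hat V_2$ outer, the common Wronskian constant gives $|\psi|=1$ a.e.\ on $\pd\O$, hence $\psi$ is a unimodular constant), with the caveat you note yourself that uniqueness is literal only after a normalization such as $V_i(\l_*)>0$, the paper's canonical choice being $V_1=V(\l,D)$ itself. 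But the heart of the lemma --- existence of the representation for every element of $m_0(\sE)$ --- is assumed in your proposal, not proved.
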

 \begin{proof}
 We use essentially Theorem D \cite{SY97}, according to which $m_+$ is of bounded characteristic in $\O$ and has no singular component in its inner part.
For $\l=\xi\pm i0$, $\xi\in\sE$, we have
 $$
 \frac {m_+(\l)-\overline{m_+(\l)}} i=\frac {m_+(\l)+{m_-(\l)}} i=\frac i{R_0(\l)}.
 $$
 Having in mind the Wronskian identity, we get
 $$
 \frac i{ R_0(\l_*)}\sqrt{\frac{{\l}}{\l_*}}\frac 1{|V_1(\l)|^2}=\frac i{ R_0(\l_*)}\sqrt{\frac{{\l}}{\l_*}}\prod_{j\ge 1}\frac{(\l_*-\l_j)\sqrt{(\l-a_j)(\l-b_j)}}{(\l-\l_j)\sqrt{(\l_*-a_j)(\l_*-b_j)}}.
 $$ 
 That is,
 $
 |V_1|^2=O(\l,D),
 $
 which define uniquely the outer part of $V_1$. By \eqref{120ct5} its inner part is the Blaschke product $\prod_{j\ge 1}\Phi_{\l_j}^{\frac{1+\e_j} 2}$.
 Thus, $V_1(\l)=V(\l,D)$, see the definition \eqref{5oct1}, and $i\sqrt\l V_2(\l)=m_+(\l) V(\l,D)$. Moreover, since on $\sE$
 $$
 m_+(\l)m_{-}(\l)=-|m_+(\l)|^2=\frac{R_1(\l)}{R_0(\l)},
 $$
 we have
 $$
 \l|V_{2}(\l)|^2=\frac{m_+(\l_*) m_-(\l_*)}{-\l_*}{\l} \prod_{j\ge 1}\frac{\l-\l_j^{(1)}}{\l_*-\l_j^{(1)}}
 \frac{\l_*-\l_j}{\l-\l_j} |V_1(\l)|^2.
 $$
 We define $\e_j^{(1)}$ such that  $\prod_{j\ge 1}\Phi_{\l_j^{(1)}}^{\frac{1+\e^{(1)}_j} 2}$ is  the numerator of the inner part of $m_+(\l)$, then
 $$
 V_2(\l)=\sqrt{\frac{m_+(\l_*)m_-(\l_*)}{-\l_*}}V(\l,D^{(1)})\quad
 \text{and}\quad \cA(D^{(1)})=\a+\fj.
 $$
 \end{proof}
\section{Canonical systems}

In this section we will get consequences of the Fourier representation, Theorem \ref{thm1}.

\begin{lemma}[Main lemma] For all $x$ the we can pass to the limits  as $\l\to -0$ in the ratio $v_{\a-\eta x}(\l)/v_{\a}(\l)$,
see \eqref{15oct10}, \eqref{28sep1t}. Moreover, the following limit exists  as $\l$ approach infinity along the negative half axis
\begin{equation}\label{16oct1}
\lim_{\l\to-\infty}\frac 1 {\t(\l)}\int_0^x\fm_+^{\a-\eta \xi}(\l)\frac{e^{2\xi\Im\t_*}d\vk^{\a+\fj}(\xi)}{\fc(\a+\fj-\eta\xi)}=x.
\end{equation}

\end{lemma}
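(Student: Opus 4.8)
The plan is to treat the two assertions separately, using the Fourier representation of Theorem~\ref{thm1} together with the Wronskian identity and the integral form of the canonical system. For the first assertion, the existence of $\lim_{\l\to-0} v_{\a-\eta x}(\l)/v_{\a}(\l)$, I would start from the explicit product/canonical representation $v_{\a,\l_*}(\l)=V_\a(\l)/V_\a(\l_*)$ of Lemma~\ref{l1oct10} and the relation \eqref{18apr7} expressing $v_{\a,\l_*}$ through the reproducing kernels $k^{\a}(\cdot,\l_*)$ and $k^{\a+\fj}(\cdot,\l_*)$. Applying the identity \eqref{pa8oct1} of Theorem~\ref{thm1bis} with $\l_0=\l_*$ and letting the spectral parameter tend to $-0$ along $\bbR_-$, the ratio $v_{\a-\eta x}(\l)/v_{\a}(\l)$ is expressed, after dividing numerator and denominator by $v_\a(\l)$, through the measures $\vk^\a,\vk^{\a+\fj}$ evaluated at the endpoint $x$; the boundary behaviour at $0$ is governed by the outer factor $O_\a(\l)$ (single-valued and positive on $\sE$), so the limit exists and equals the explicit expression \eqref{28sep1t}. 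The point is that $\t(\l)-\t_*$ and $\Im\t(\l)=\cM(\l)$ both stay bounded as $\l\to-0$ (indeed $\cM$ vanishes continuously at every finite boundary point of $\O$), so no cancellation of exponential type occurs and the logarithmic-derivative integral in \eqref{28sep1t} is finite.

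For the second assertion \eqref{16oct1}, the strategy is to recognize the integrand as a diagonal entry of the transfer matrix built in Section~3. Using \eqref{27apr4} and \eqref{27apr5}, together with the definition $\cT_\a(x,\l)=\operatorname{diag}(\tau^{\a+\fj}(x),\l\tau^\a(x))$ and $\fA_\a(\l,x)$ from \eqref{15oct8}, one writes
\begin{equation}\label{plan1}
\int_0^x \fm_+^{\a-\eta\xi}(\l)\,\frac{e^{2\xi\Im\t_*}d\vk^{\a+\fj}(\xi)}{\fc(\a+\fj-\eta\xi)}
\end{equation}
in terms of $\tau^{\a+\fj}(x)$ and the upper entries of $\fA_\a(\l,\xi)$, where $\fm_+^{\a}(\l)=i\sqrt\l\,v_{\a+\fj}(\l)/v_\a(\l)$ by \eqref{15oct5} and Theorem~\ref{th9}. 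The chain property \eqref{16oct7} and the $\cJ$-contractivity reduce the claim to an asymptotic statement about $\log\|\fA_\a(\l,x)\|$: by Corollary~\ref{c15oc1}(1), the exponential type of the transfer-matrix entries in $\O$ with respect to the Martin function $\cM(\l)$ is exactly $x$, i.e.\ $\lim_{\l\to-\infty}\log\|\fA_\a(\l,x)\|/\cM(\l)=x$. Since $\Im\t(\l)=\cM(\l)$ and $e^{ix\t(\l)}$ carries the growth $e^{-x\cM(\l)}$, matching the leading exponential in \eqref{27apr4} against the product structure in \eqref{plan1} and dividing by $\t(\l)$ yields the limit $x$.

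The main obstacle I anticipate is controlling the interchange of the limit $\l\to-\infty$ with the integral over $\xi\in[0,x]$ in \eqref{16oct1}: the integrand $\fm_+^{\a-\eta\xi}(\l)$ grows like $\sqrt{|\l|}\,e^{?\cM(\l)}$ and one must show that, after normalizing by $\t(\l)$, the $\xi$-dependence of the exponential rate is exactly affine so that the integral contributes the full length $x$ and not some smaller effective interval. The clean way to do this is to avoid integrating by hand: rewrite \eqref{16oct1} as the $(1,2)$ or $(2,2)$ entry of $\fA_\a(\l,x)$ using the integral equation \eqref{15oct8}, so that the needed asymptotics is precisely the already-available statement \eqref{15oc1} of Corollary~\ref{c15oc1}. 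Thus the proof reduces to (i) verifying the algebraic identification of the integral \eqref{plan1} with a transfer-matrix entry, a computation of the type carried out in \cite[Appendix]{VYKL}, and (ii) invoking \eqref{15oc1}; the boundedness of $\cM(\l)/\log|\l|$-type quantities needed for (ii) is guaranteed by the Widom/DCT hypotheses via Lemma~\ref{ledct} and the localization of the transfer matrix to the disks $\bbD_{(a_n+b_n)/2}$.
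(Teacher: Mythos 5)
There is a genuine gap in both halves of your plan. For the first assertion, your mechanism for the existence of $\lim_{\l\to-0}v_{\a-\eta x}(\l)/v_{\a}(\l)$ — that "the boundary behaviour at $0$ is governed by the outer factor $O_\a(\l)$", so the limit exists — proves too much: it would give regularity of the point $0$, i.e.\ existence of $\lim_{\l\to-0}v_\b(\l)/v_\a(\l)$ for \emph{all} pairs of characters, and the paper's own remark points out that this fails (e.g.\ for a geometric-progression set $\sE$), while $\U^\a(x)$ still makes sense. The existence is special to the flow $\a\mapsto\a-\eta x$ and comes from an exact identity: combining \eqref{18apr7} with \eqref{pa8oct1} one gets \eqref{b13apr1}; taking the logarithmic differential in $x$, integrating over $(0,x)$, and subtracting the same relation evaluated at $\l=\l_*$, one expresses
$\log\bigl(\fc(\a-\eta x)v_{\a-\eta x}(\l)/(\fc(\a)v_{\a}(\l))\bigr)$ as $(\t(\l)-\bar\t_*)x$ minus a $\l$-independent integral minus $\int_0^x\fm_+^{\a-\eta\xi}(\l)\,\frac{e^{2\xi\Im\t_*}d\vk^{\a+\fj}(\xi)}{\fc(\a+\fj-\eta\xi)}$, and the last term tends to $0$ as $\l\to-0$ because $\fm_+^{\a-\eta\xi}\in\cS_0$. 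Your sketch names the right identities but replaces this computation with the assertion that "the logarithmic-derivative integral in \eqref{28sep1t} is finite", which is circular, since \eqref{28sep1t} is exactly what has to be derived.

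For the second assertion your route is both misidentified and circular. The integral in \eqref{16oct1} is not an entry of $\fA_\a(\l,x)$ (entries are products of the $V$'s, not integrals of the Weyl ratio $\fm_+$); by the identity above it is, up to a $\l$-independent constant, $(\t(\l)-\bar\t_*)x - i\log\bigl(\fc(\a-\eta x)v_{\a-\eta x}(\l)/(\fc(\a)v_{\a}(\l))\bigr)$, i.e.\ a \emph{logarithm} of a combination of solutions. Moreover, Corollary \ref{c15oc1}(1), which you invoke as the asymptotic input, is obtained in the paper from \eqref{15oct1}, i.e.\ downstream of Theorem \ref{thm2}, whose proof uses the Main Lemma — so quoting it here is circular; and even taken on its own, its proof rests on Theorems 2 and 3 of \cite{VoYu16}, which is precisely the external ingredient your sketch never isolates. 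What is actually needed (and is the entire content of the "delicate" second claim) is $\lim_{\l\to-\infty}\log v_{\a}(\l)/\t(\l)=0$ for every $\a\in\pi_1(\O)^*$; once this is available, dividing the identity by $\t(\l)$ gives \eqref{16oct1} at once, and no interchange of the limit $\l\to-\infty$ with the $\xi$-integral — the obstacle you worry about — ever arises, because the integral has been evaluated in closed form before any limit is taken.
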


\begin{proof} Let 
\begin{equation}\label{13oct0}
\fc(\a)= \fc(\a+\fj)=V_{\a}(\l_*)V_{\a+\fj}(\l_*)\cC(\a)=\frac{\sqrt{\l_*}} i\left(k^{\a}(\l_*,\l_*)+k^{\a+\fj}(\l_*,\l_*)\right).
\end{equation}
Using \eqref{18apr7}, from \eqref{pa8oct1} we have  an integral relation 
\begin{align}\label{b13apr1}
\nonumber i \fc(\a-\eta x)v_{\a-\eta x}(\l)e^{ix(\t(\l)-\bar \t_*)}
=\sqrt{\l}\int_{x}^\infty e^{i\xi(\t(\l)-\t_*)} {v_{\a+\fj-\eta\xi}(\l)}  d\vk^{\a+\fj}(\xi)\\+
\sqrt{\l_*}\int_{x}^\infty  e^{i\xi(\t(\l)-\t_*)} {v_{\a-\eta\xi}(\l)}  d\vk^{\a}(\xi).
\end{align}
In terms of differentials we get
\begin{align*}
-i d\log \left( \fc(\a-\eta x) v_{\a-\eta x}(\l)e^{ix(\t(\l)-\bar \t_*)}\right)=\sqrt{\l}\frac{v_{\a+\fj-\eta x}(\l)}{ v_{\a-\eta x}(\l)}
\frac{e^{2x\Im \t_*}d\vk^{\a+\fj}(x)}{ \fc(\a+\fj-\eta x) }\\ +
\sqrt{\l_*}\frac{e^{2x\Im\t_*}d\vk^\a(x)}{\fc(\a-\eta x) }.
\end{align*}
Integrating on the interval $(0,\ell)$, we obtain
\begin{align}\label{13oct1}
({\t(\l)-\bar \t_*})\ell-i\log \frac{ \fc(\a-\eta \ell)v_{\a-\eta \ell}(\l)}{ \fc(\a)v_{\a}(\l)}\nonumber\\ =
\int_0^\ell
\frac{\sqrt{\l} v_{\a+\fj-\eta x}(\l)}{v_{\a-\eta x}(\l)}
\frac{e^{2\xi\Im\t_*}d\vk^{\a+\fj}(x)}{\fc(\a+\fj-\eta x) }
+\int_0^\ell \sqrt{\l_*}\frac{e^{2\xi\Im\t_*}d\vk^\a(x)}{ \fc(\a-\eta x) }.
\end{align}
For $\l=\l_*$ we have
\begin{align}\label{13oct2}
2\ell\Im\t_*-\log \frac{ \fc(\a-\eta \ell)}{ \fc(\a)}\nonumber\\ =
\frac{\sqrt{\l_*}} i \int_0^\ell
\frac{e^{2\xi\Im\t_*}d\vk^{\a+\fj}(x)}{ \fc(\a+\fj-\eta x) }
+\frac{\sqrt{\l_*}} i \int_0^\ell \frac{e^{2\xi\Im\t_*}d\vk^\a(x)}{ \fc(\a-\eta x) }.
\end{align}
Passing to the limit in \eqref{13oct1} as $\l\to 0$, we get 
\begin{equation*}
\lim_{\l\to 0}\log \frac{\fc(\a-\eta x)v_{\a-\eta x}(\l)}{ \fc(\a)v_{\a}(\l)}=x \Im\t_*  -
\frac{\sqrt{\l_*}} i\int_0^ x \frac{e^{2\xi\Im\t_*}d\vk^\a(\xi)}{\fc(\a-\eta \xi) }.
\end{equation*}
Using \eqref{13oct2}, we obtain
\begin{align*}
\lim_{\l\to -0}\log \frac{v_{\a-\eta x}(\l)}{v_{\a}(\l)}&=x\Im\t_*  -
\frac{\sqrt{\l_*}} i\int_0^ x \frac{e^{2\xi\Im\t_*}d\vk^{\a+\fj}(\xi)}{\fc(\a+\fj-\eta \xi) }
\\
&=x\Im\t_*  +
\int_0^ x \frac{d e^{-2\xi \Im\t_*}k^{\a+\fj-\eta\xi}(\l_*,\l_*)}{e^{-2\xi\Im\t_*}(k^{\a-\eta\xi}(\l_*,\l_*)+k^{\a+\fj-\eta \xi}(\l_*,\l_*) )},
\end{align*}
which we can bring to a more symmetric form
\begin{align*}
&=x\Im\t_*+\frac 1 2 \log {e^{-2\xi\Im\t_*}(k^{\a-\eta\xi}(\l_*,\l_*)+k^{\a+\fj-\eta \xi}(\l_*,\l_*) )}|_0^x
\\
&-\frac 1 2\int_0^ x \frac{d\, e^{-2\xi\Im\t_*}(k^{\a-\eta\xi}(\l_*,\l_*)-k^{\a+\fj-\eta \xi}(\l_*,\l_*) )}{e^{-2\xi\Im\t_*}(k^{\a-\eta\xi}(\l_*,\l_*)+k^{\a+\fj-\eta \xi}(\l_*,\l_*) )}
\\
&=\log\sqrt{\frac{k^{\a-\eta x}(\l_*,\l_*)+k^{\a+\fj-\eta x}(\l_*,\l_*) }{k^{\a}(\l_*,\l_*)+k^{\a+\fj}(\l_*,\l_*) }}
\\
&+\frac 1 2\int_0^ x \frac{d\, e^{-2\xi\Im\t_*}(k^{\a+\fj-\eta\xi}(\l_*,\l_*)-k^{\a-\eta \xi}(\l_*,\l_*) )}{e^{-2\xi\Im\t_*}(k^{\a-\eta\xi}(\l_*,\l_*)+k^{\a+\fj-\eta \xi}(\l_*,\l_*) )}.
\end{align*}
Thus, \eqref{15oct10}, \eqref{28sep1t} are proved.

The second limit \eqref{16oct1} is an essentially more delicate question. In fact, we again pass to the limit in \eqref{13oct1} and claim that
(for any $\a\in\pi_1(\O)^*$)
$$
\lim_{\l\to-\infty}\frac{\log v_{\a}(\l)}{\t(\l)}=0.
$$
But, this is exactly the claim of  Theorem 2 and Theorem 3 in \cite{VoYu16}.
\end{proof}


\begin{lemma}\label{l15oct1} Let
$$
\fe_{\a}(x)=\exp
\frac 1 2\int_0^ x \frac{d\, e^{-2\xi\Im\t_*}(k^{\a+\fj-\eta\xi}(\l_*,\l_*)-k^{\a-\eta \xi}(\l_*,\l_*) )}{e^{-2\xi\Im\t_*}(k^{\a+\fj-\eta \xi}(\l_*,\l_*) 
+k^{\a-\eta\xi}(\l_*,\l_*))}
$$
and 
\begin{equation}\label{15octb4}
\ff_\a(\l,x)=\fe_{\a}(x)\sqrt{\fc(\a-\eta x)}
v_{\a-\eta x}(\l)e^{ix\t(\l)}.
\end{equation}
Then
\begin{equation}\label{15octb2}
d\ff_\a(\l,x)=
i\sqrt{\l} \ff_{\a+\fj}(\l,x)
d\tau^{\a+\fj}(x), \quad d\tau^{\a+\fj}(x)=\frac i{\sqrt{\l_*}}
\frac{e^{2x\Im\t_*}d\vk^{\a+\fj}(x)}{\U^{\a+\fj}(x)^2}.
\end{equation}
\end{lemma}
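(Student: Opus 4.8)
The plan is to derive \eqref{15octb2} by logarithmic differentiation of \eqref{15octb4}, and to split the outcome into a part that does not depend on $\l$ (which I claim vanishes --- this is precisely what the auxiliary factor $\fe_\a$ is built to achieve) plus a part proportional to $v_{\a+\fj-\eta x}(\l)\,d\vk^{\a+\fj}(x)$ that reassembles into $i\sqrt{\l}\,\ff_{\a+\fj}(\l,x)\,d\tau^{\a+\fj}(x)$. From \eqref{15octb4} one has
\[
d\log\ff_\a(\l,x)=d\log\fe_\a(x)+\tfrac12\,d\log\fc(\a-\eta x)+d\log v_{\a-\eta x}(\l)+i\t(\l)\,dx,
\]
and the last two terms are governed by the differential identity already obtained in the proof of the Main Lemma (the differential form of \eqref{13oct1}, itself extracted from \eqref{pa8oct1} via \eqref{18apr7}):
\[
-i\,d\log\bigl(\fc(\a-\eta x)\,v_{\a-\eta x}(\l)\,e^{ix(\t(\l)-\bar\t_*)}\bigr)
=\sqrt{\l}\,\frac{v_{\a+\fj-\eta x}(\l)}{v_{\a-\eta x}(\l)}\,\frac{e^{2x\Im\t_*}d\vk^{\a+\fj}(x)}{\fc(\a+\fj-\eta x)}
+\sqrt{\l_*}\,\frac{e^{2x\Im\t_*}d\vk^{\a}(x)}{\fc(\a-\eta x)}.
\]
Solving this for $d\log v_{\a-\eta x}(\l)+i\t(\l)\,dx$ and substituting, $d\log\ff_\a(\l,x)$ becomes $D(x)+i\sqrt{\l}\,\dfrac{v_{\a+\fj-\eta x}(\l)}{v_{\a-\eta x}(\l)}\,\dfrac{e^{2x\Im\t_*}d\vk^{\a+\fj}(x)}{\fc(\a+\fj-\eta x)}$, where
\[
D(x)=d\log\fe_\a(x)-\tfrac12\,d\log\fc(\a-\eta x)+i\bar\t_*\,dx+i\sqrt{\l_*}\,\frac{e^{2x\Im\t_*}d\vk^{\a}(x)}{\fc(\a-\eta x)}
\]
is free of $\l$.

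The crux will be to show $D(x)\equiv 0$. I will do this from three scalar ingredients. (1) Differentiating \eqref{pat8oct3} gives $d\bigl(e^{-2x\Im\t_*}k^{\a-\eta x}(\l_*,\l_*)\bigr)=-d\vk^{\a}(x)$ and the same with $\a$ replaced by $\a+\fj$; combining this with the definition of $\fe_\a$ in the statement and with \eqref{13oct0} yields $d\log\fe_\a(x)=\dfrac{\sqrt{\l_*}}{2i}\,e^{2x\Im\t_*}\Bigl(\dfrac{d\vk^{\a}(x)}{\fc(\a-\eta x)}-\dfrac{d\vk^{\a+\fj}(x)}{\fc(\a+\fj-\eta x)}\Bigr)$. (2) Setting $\l=\l_*$ in the displayed differential identity --- where $v_{\a-\eta x}(\l_*)=v_{\a+\fj-\eta x}(\l_*)=1$ by Lemma \ref{l1oct10} and $\t(\l_*)=\t_*$ --- produces the differential form of \eqref{13oct2}, namely $d\log\fc(\a-\eta x)=2\Im\t_*\,dx+i\sqrt{\l_*}\,e^{2x\Im\t_*}\bigl(\tfrac{d\vk^{\a+\fj}(x)}{\fc(\a+\fj-\eta x)}+\tfrac{d\vk^{\a}(x)}{\fc(\a-\eta x)}\bigr)$. (3) $\fc(\a-\eta x)=\fc(\a+\fj-\eta x)$ by \eqref{13oct0}, and, by the normalisation $\star\cG(\l_*,\cdot)=0$, $\t_*=i\cM(\l_*)$ is purely imaginary, so $i\bar\t_*=\Im\t_*$. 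Inserting (1) and (2) into $D(x)$ and using $1/i=-i$, the coefficients of $e^{2x\Im\t_*}d\vk^{\a}(x)/\fc(\a-\eta x)$ and of $e^{2x\Im\t_*}d\vk^{\a+\fj}(x)/\fc(\a+\fj-\eta x)$ each collapse to $0$, while the $dx$-coefficient $-\Im\t_*+i\bar\t_*$ vanishes by (3). Hence $D(x)=0$ and $d\log\ff_\a(\l,x)=i\sqrt{\l}\,\dfrac{v_{\a+\fj-\eta x}(\l)}{v_{\a-\eta x}(\l)}\,\dfrac{e^{2x\Im\t_*}d\vk^{\a+\fj}(x)}{\fc(\a+\fj-\eta x)}$.

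Finally I will repackage the right-hand side into the claimed form. Since $2\fj=0$ in $\pi_1(\O)^*$, the integrand defining $\fe_{\a+\fj}$ is the negative of the one defining $\fe_\a$ (and both equal $1$ at $x=0$), whence $\log\fe_{\a+\fj}(x)=-\log\fe_\a(x)$; together with $\fc(\a-\eta x)=\fc(\a+\fj-\eta x)$ and \eqref{15octb4} this gives $\ff_\a(\l,x)\dfrac{v_{\a+\fj-\eta x}(\l)}{v_{\a-\eta x}(\l)}=\fe_\a(x)^2\,\ff_{\a+\fj}(\l,x)$. Moreover \eqref{28sep1t} and \eqref{13oct0} give $\U^{\a+\fj}(x)^2=\bigl(k^{\a+\fj-\eta x}(\l_*,\l_*)+k^{\a-\eta x}(\l_*,\l_*)\bigr)\fe_{\a+\fj}(x)^2=\dfrac{i}{\sqrt{\l_*}}\,\fc(\a+\fj-\eta x)\,\fe_\a(x)^{-2}$, so $\dfrac{i}{\sqrt{\l_*}}\dfrac{e^{2x\Im\t_*}d\vk^{\a+\fj}(x)}{\U^{\a+\fj}(x)^2}=\dfrac{\fe_\a(x)^2\,e^{2x\Im\t_*}d\vk^{\a+\fj}(x)}{\fc(\a+\fj-\eta x)}=d\tau^{\a+\fj}(x)$; substituting into $d\ff_\a(\l,x)=\ff_\a(\l,x)\cdot i\sqrt{\l}\,\tfrac{v_{\a+\fj-\eta x}(\l)}{v_{\a-\eta x}(\l)}\,\tfrac{e^{2x\Im\t_*}d\vk^{\a+\fj}(x)}{\fc(\a+\fj-\eta x)}$ yields \eqref{15octb2}. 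The main obstacle is the bookkeeping behind $D(x)\equiv 0$: it hinges on using the exact shape of $\fe_\a$ together with the two ``$\l_*$-diagonal'' identities \eqref{pat8oct3} and \eqref{13oct2} and on $\t_*$ being purely imaginary; a secondary point is the justification of differentiating under the integral sign in \eqref{pa8oct1}, which is supplied by the regularity of $x\mapsto v_{\a-\eta x}(\l)$ already established.
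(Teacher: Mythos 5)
Your proof is correct and follows essentially the same route as the paper: both rest on the differential form of \eqref{b13apr1}, its specialization at $\l=\l_*$ (i.e.\ \eqref{13oct2}), the identity \eqref{13oct0}, and the symmetries $\fe_{\a+\fj}=\fe_\a^{-1}$, $\fc(\a+\fj)=\fc(\a)$. The only difference is organizational --- the paper introduces the integrating factor $a_\a(x)$ via an ODE and integrates to discover it equals $e^{x\Im\t_*}\bigl(\fc(\a-\eta x)/\fc(\a)\bigr)^{-1/2}\fe_\a(x)$, whereas you verify directly that the prescribed $\fe_\a$ kills the $\l$-independent part $D(x)$ (your use of $\Re\t_*=0$ is likewise implicit in the paper's step from $e^{ix(\t(\l)-\bar\t_*)}$ to $e^{ix\t(\l)}$).
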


\begin{proof}
We define $a_{\a}(x)$ by
\begin{equation}\label{15octb1}
i \fc(\a-\eta x)e^{ix(\t_*-\bar \t_*)}d a_{\a}(x)
=a_{\a}(x)\sqrt{\l_*}   d\vk^{\a}(\xi).
\end{equation}
With this multiplier, 
 we get
\begin{align*}
 d\left(a_{\a}(x)
i \fc(\a-\eta x)v_{\a-\eta x}(\l)e^{ix(\t(\l)-\bar \t_*)}\right)\ \\ \nonumber
=
i \fc(\a-\eta x)v_{\a-\eta x}(\l)e^{ix(\t(\l)-\bar \t_*)}da_{\a}(x)
+
 a_{\a}(x)d\left(
i\fc(\a-\eta x)v_{\a-\eta x}(\l)e^{ix(\t(\l)-\bar \t_*)}\right),
\end{align*}
and by \eqref{b13apr1},
\begin{align}\label{14oct2}\nonumber
\nonumber
=
i \fc(\a-\eta x)v_{\a-\eta x}(\l)e^{ix(\t(\l)-\bar \t_*)}da_{\a}(x)
-a_{\a}(x)\sqrt{\l_*} e^{ix(\t(\l)-\t_*)} {v_{\a-\eta x}(\l)}  d\vk^{\a}(x)\ \\
\nonumber -a_{\a}(x)\sqrt{\l} e^{ix(\t(\l)-\t_*)} {v_{\a+\fj-\eta x}(\l)}  d\vk^{\a+\fj}(x)\ \\
=-a_{\a}(x)\sqrt{\l} e^{ix(\t(\l)-\t_*)} {v_{\a+\fj-\eta x}(\l)}  d\vk^{\a+\fj}(x).
\end{align}
We integrate \eqref{15octb1}
\begin{align*}
\log{a_{\a}(x)}=&\frac{\sqrt{\l_*}} i \int_0^x  \frac{e^{2x\Im\t_*}d\vk^{\a}(\xi)}{ \fc(\a-\eta x)}
\\
=&x\Im\t_*-\log\sqrt{\frac{k^{\a-\eta x}(\l_*,\l_*)+k^{\a+\fj-\eta x}(\l_*,\l_*) }{k^{\a}(\l_*,\l_*)+k^{\a+\fj}(\l_*,\l_*) }}
\\
+&\frac 1 2\int_0^ x \frac{d\, e^{-2\xi\Im\t_*}(k^{\a+\fj-\eta\xi}(\l_*,\l_*)-k^{\a-\eta \xi}(\l_*,\l_*) )}{e^{-2\xi\Im\t_*}(k^{\a-\eta\xi}(\l_*,\l_*)+k^{\a+\fj-\eta \xi}(\l_*,\l_*) )},
\end{align*}
to obtain explicitly 
$$
a_{\a}(x)=e^{x\Im\t_*}\left(\frac{ \fc(\a-\eta x)}{ \fc(\a)}\right)^{-1/2}\fe_{\a}(x).
$$
With this expression, going back to \eqref{14oct2}, we have 
$$
d\left(\fe_\a(x) \sqrt{\fc(\a-\eta x)} v_{\a-\eta x}(\l)e^{ix\t(\l)} \right)=
i{\sqrt{\l}} \fe_\a(x)v_{\a+\fj-\eta x}(\l) e^{ix\t(\l)}
\frac{e^{2x\Im\t_*}d\vk^{\a+\fj}(x)}{\sqrt{{\fc(\a-\eta x)}}}.
$$
Now, we recall  that
$$
\frac {\sqrt{\l_*}} i\U^\a(x)^2=\fe_{\a}(x)^2 {\fc(\a-\eta x)},
$$
as it was defined in \eqref{28sep1t}, see also \eqref{13oct0}.
Thus, using the symmetry properties 
$$
\fe_{\a+\fj}(x)=\frac 1{\fe_{\a}(x)},\quad \fc(\a+\fj)=\fc(\a),
$$
we obtain \eqref{15octb2}.

\end{proof}

\begin{proof}[Proof of Theorem \ref{thm2}]
We note that
$$
\fe_{\a}(x)\sqrt{\fc(\a-\eta x)}
v_{\a-\eta x}(\l)=\sqrt{ \cC(\a-\eta x)}\fe_{\a}(x)\sqrt{\frac{V_{\a+\fj-\eta x}(\l_*)}{V_{\a-\eta x}(\l_*)}}V_{\a-\eta x}(\l).
$$
According to this remark, 
we modify slightly the transfer matrix $\cA_{\a}(\l,x)$, see \eqref{27apr4}, making rescaling of the basis vectors in the scaling subspaces
\begin{align}\label{15oct1}\nonumber
\begin{bmatrix} e^{ix\t(\l)}&0\\ 0&e^{-ix\t(\l)}
\end{bmatrix}
\cV_{\a-\eta x}(\l)\begin{bmatrix}\fe_{\a+\fj}(x)\sqrt\frac{V_{\a-\eta x}(\l_*)}{V_{\a+\fj-\eta x}(\l_*)}&0\\
0&\fe_{\a}(x)\sqrt{\frac{V_{\a+\fj-\eta x}(\l_*)}{V_{\a-\eta x}(\l_*)}}
\end{bmatrix}\\
=\cV_{\a}(\l)\begin{bmatrix}\sqrt\frac{V_{\a}(\l_*)}{V_{\a+\fj}(\l_*)}&0\\
0&\sqrt{\frac{V_{\a+\fj}(\l_*)}{V_{\a}(\l_*)}}
\end{bmatrix}\fA_\a(\l,x).
\end{align}
In this normalization we have
$$
\begin{bmatrix}i\sqrt{\l}\ff_{\a+\fj}(\l,x)&\ff_{\a}(\l,x)\end{bmatrix}
=\begin{bmatrix}i\sqrt{\l}\ff_{\a+\fj}(\l,0)&\ff_{\a}(\l,0)\end{bmatrix}\fA(\l,x).
$$
Due to Lemma \ref{l15oct1},
\begin{align*}
d\begin{bmatrix}i\sqrt{\l}\ff_{\a+\fj}(\l,x)&\ff_{\a}(\l,x)\end{bmatrix}\cJ=&
\begin{bmatrix}-\l \ff_{\a}(\l,x)d\tau^{\a}(x)&i\sqrt{\l}\ff_{\a+\fj}(\l,x)d\tau^{\a+\fj}(x)
\end{bmatrix}\begin{bmatrix}0&1\\-1&0
\end{bmatrix}
\\
=&-\begin{bmatrix}i\sqrt{\l}\ff_{\a+\fj}(\l,x)&\ff_{\a}(\l,x)\end{bmatrix}\begin{bmatrix}d\tau^{\a+\fj}(x)&0\\0&\l d\tau^{\a}(x)
\end{bmatrix}.
\end{align*}
Therefore we get
$$
d\fA(\l,x)\cJ=-\fA(\l,x)\begin{bmatrix}d\tau^{\a+\fj}(x)&0\\0&\l d\tau^{\a}(x)
\end{bmatrix},
$$
that is, \eqref{15oct8}.

Respectively, for the new Weyl-Titchmarsh function we have
$$
\fm_+^\a(\l)=\frac{V_\a(\l_*)}{V_{\a+\fj}(\l_*)}m^\a_+(\l)=i\sqrt{\l}\frac{v_{\a+\fj}(\l)}{v_{\a}(\l)},
$$
which, according to \eqref{11oct2}, means \eqref{15oct9} and this is the last claim of Theorem \ref{thm2}. 
\end{proof}

\begin{proof}[Proof of Corollary \ref{c15oc1}]
To prove the first statement we use \eqref{15oct1}. We have
$$
\frac 1 C \|\cV_{\a}(\l)\|^{-1} \|\cV_{\a-\eta x}(\l)^{-1}\|^{-1}
e^{x\cM(\l)}\le \|\fA_\a(\l,x)\|\le C\|\cV_{\a-\eta x}(\l)\| \|\cV_{\a}(\l)^{-1}\| e^{x\cM(\l)}.
$$
By Theorems 2 and 3  \cite{VoYu16} for an arbitrary $\a\in\pi_1(\O)^*$ we have 
$$
\lim_{\l\to-\infty}\frac{\log\|\cV_{\a}(\l)^{\pm 1}\|}{\cM(\l)}=0.
$$ 
Therefore we get \eqref{15oc1}.

To prove  the second claim, let us rewrite \eqref{16oct1} into the form
\begin{equation*}
\lim_{\l\to -\infty}\frac{\sqrt{\l}}{\t(\l)} 
\frac 1 \ell\int_0^\ell
\frac{v_{\a+\fj-\eta x}(\l)}{v_{\a-\eta x}(\l)}\frac{e^{2x\Im\t_*}d\vk^{\a+\fj}(x)}{\fc(\a-\eta x) }
=1.
\end{equation*}
Recall that $\fc(\a)=\fc(\a+\fj)$. Let 
$$
d\k^\a(x)=\frac{e^{2x\Im\t_*}}{\fc(\a-\eta x)}(d\vk^\a(x)+d\vk^{\a+\fj}(x))
$$ and we define the densities 
$$
\rho^{\a}(x)=\frac{e^{2 x\Im\t_*}d\vk^\a(x)}{\fc(\a-\eta x)d\k^\a(x)}, \quad
\rho^{\a+\fj}(x)=\frac{e^{2 x\Im\t_*}d\vk^{\a+\fj}(x)}{\fc(\a+\fj-\eta x)d\k^\a(x)}.
$$ 
 Then, we have
\begin{align*}
1=\lim_{\l\to -\infty}\frac{\sqrt{\l}}{\t(\l)}\frac 1 {2\ell}\int_0^\ell\left(
\frac{v_{\a+\fj-\eta x}(\l)}{v_{\a-\eta x}(\l)}\rho^{\a+\fj}(x)
+\frac{v_{\a-\eta x}(\l)}{v_{\a+\fj-\eta x}(\l)}{\rho^\a(x)}\right)d\k^\a(x)
\\ 
\ge \lim_{\l\to -\infty}\frac{\sqrt{\l}}{\t(\l)}\frac 1 \ell\int_0^\ell
{\sqrt{\rho^\a(x)\rho^{\a+\fj}(x)}}d\k^\a(x).
\end{align*}
By \eqref{b13apr3}
$$
\frac 1 \ell\int_0^\ell {\sqrt{\rho^\a(x) \rho^{\a+\fj}(x)}}d\k^\a(x)=0
\quad\text{and}
\quad
\rho^\a(x) \rho^{\a+\fj}(x)=0 \text{ for a.e. }\ x \ \text{w.r.t.} \ \k^\a,
$$
in other words the measures are mutually singular.

\end{proof}

\section*{Acknowledgment}

The author  would like to express his gratitude to
Benjamin Eichinger and Roman Bessonov for very helpful discussions.
This work was supported by the Austrian Science Fund FWF, project no: P29363-N32.

\bibliographystyle{amsplain}
\providecommand{\MR}[1]{}
\providecommand{\bysame}{\leavevmode\hbox to3em{\hrulefill}\thinspace}
\providecommand{\MR}{\relax\ifhmode\unskip\space\fi MR }
\providecommand{\MRhref}[2]{%
  \href{http://www.ams.org/mathscinet-getitem?mr=#1}{#2}
}
\providecommand{\href}[2]{#2}

\bigskip

P. Yuditskii, Abteilung f\"ur Dynamische Systeme und Approximationstheorie, Institut f\"ur Analysis, Johannes Kepler Universit\"at Linz, A-4040 Linz, Austria

\emph{E-mail address:} {Petro.Yudytskiy@jku.at}

\end{document}